\newcommand *\w{^\wedge}
\newcommand{\cprime}{\('\)}
\newtheorem{thm}{Theorem}[section]
\theoremstyle{definition}
\newtheorem{example}[thm]{Example}
\newtheorem{theorem}{Theorem}[section]
\newtheorem{lemma}[theorem]{Lemma}
\newtheorem{proposition}[theorem]{Proposition}
\newtheorem{definition}[theorem]{Definition}
\newtheorem{remark}[theorem]{Remark}
\DeclarePairedDelimiterX{\inp}[2]{\langle}{\rangle}{#1, #2}
\def\@tocline#1#2#3#4#5#6#7{\relax
	\ifnum #1>\c@tocdepth % then omit
	\else
	\par \addpenalty\@secpenalty\addvspace{#2}%
	\begingroup \hyphenpenalty\@M
	\@ifempty{#4}{%
		\@tempdima\csname r@tocindent\number#1\endcsname\relax
	}{%
		\@tempdima#4\relax
	}%
	\parindent\z@ \leftskip#3\relax \advance\leftskip\@tempdima\relax
	\rightskip\@pnumwidth plus4em \parfillskip-\@pnumwidth
	#5\leavevmode\hskip-\@tempdima
	\ifcase #1
	\or\or \hskip 1em \or \hskip 2em \else \hskip 3em \fi%
	#6\nobreak\relax
	\dotfill\hbox to\@pnumwidth{\@tocpagenum{#7}}\par
	\nobreak
	\endgroup
	\fi}
\numberwithin{equation}{section}
\title{A Class of De Giorgi Type and H\"older Continuity for Some Problems in Musielak-Orlicz-Sobolev Spaces}
\author{Hlel Missaoui$^{1}$}
\author{Anouar Bahrouni$^{2}$}
\author{Hichem Ounaies$^{3}$}
\address{$^{1}$ Mathematics Department,  Higher Institute of Computer Science of Mahdia, University of Monastir, 5111 Mahdia, Tunisia.}
\address{$^{2}$ $^{3}$ Mathematics Department, Faculty of Sciences, University of Monastir, 5019 Monastir, Tunisia.}
\email{$^{1}$ \tt Hlel.Missaoui@fsm.rnu.tn}
\email{$^{2}$ \tt bahrounianouar@yahoo.fr}
\email{$^{3}$ \tt Hichem.Ounaies@fsm.rnu.tn}
\date{}
\begin{document}
\begin{abstract}
In this paper, we introduce a new class of De Giorgi type functions, denoted by \(\mathcal{B}_{G(x,t)}\), and establish the H\"older continuity of its elements under suitable additional assumptions on the generalized \textnormal{N}-function \(G(x,t)\). As an application, we prove the H\"older continuity of solutions to quasilinear equations whose principal part is in divergence form with \(G(x,t)\)-growth conditions, including both critical and standard growth cases. The novelty of our work lies in the generalization of the H\"older continuity results previously known for variable exponent \cite[X, Fan and D. Zhao]{Fan1999} and Orlicz \cite[G. M. Lieberman]{Li1991} problems. Moreover, our results encompass a wide variety of quasilinear equations.

\end{abstract}
\maketitle
\noindent \textbf{Keywor{\rm d}s.} Class of De Giorgi Type; H\"older Continuity; Musielak-Orlicz-Sobolev Spaces; Weak solutions.

\noindent\textbf{\textup{2020} Mathematics Subject Classification.} {Primary: 
35R09, %Integro-partial differential equations
35B65; %Smoothness and regularity of solutions to PDEs
Secondary: 
47G20, %Integro-differential operators
35K55%Nonlinear parabolic equations
}

\smallskip
	
\tableofcontents

\section{Introduction}
\label{sec:intro}
\subsection{Overview}
The qualitative theory of partial differential equations studies the behavior of weak solutions
$u \in W^{1,1}_{\mathrm{loc}}(\Omega)$ of problems of the form
\begin{equation}\label{main_eq}
-\operatorname{div} A(x,u,D u) = B(x,u,D u) \quad \text{in } \Omega.
\end{equation}
Existence results, based on the calculus of variations and topological methods, usually give solutions only in Sobolev spaces. 
To use these solutions in applications, it is important to know if they are more regular. 
In particular, one wants Hölder continuity $u \in C^{0,\alpha}_{\mathrm{loc}}(\Omega)$. 
This property lies between boundedness ($L^\infty$) and differentiability ($C^{1,\alpha}$), and is crucial for uniqueness, stability, and free boundary problems.\\

The study of regularity began with the famous works of De Giorgi \cite{deGiorgi1957} and Nash \cite{Nash1958}, later completed by Moser \cite{Moser1960,Moser1961}. 
They proved that weak solutions of linear elliptic equations in divergence form,
\[
-\operatorname{div}(A(x)D u)=0,
\]
with bounded measurable coefficients $A(x)$ satisfying uniform ellipticity
$\lambda |\xi|^2 \le \langle A(x)\xi,\xi\rangle \le \Lambda |\xi|^2$,
are H\"older continuous. 
This result was very surprising, because it did not require smoothness of $A(x)$. 
It introduced two powerful techniques, the Moser iteration and the De Giorgi measure argument.
Later, attention moved to quasilinear equations, such as the $p$-Laplacian
\[
\Delta_p u := \operatorname{div}(|D u|^{p-2}D u).
\]
Uraltseva \cite{Uraltseva1968}, Uhlenbeck \cite{Uhlenbeck1977}, and Lieberman \cite{Li1988} proved that solutions of $-\Delta_p u=0$ are $C^{1,\alpha}$. 
But this regularity is delicate for $-\Delta_p u=f$, if $f \in L^q$ with $q<n/p$, the solutions may not even be continuous \cite{MG2,marc1,Mar1}. 
This shows that the regularity program must proceed in two steps: first, prove $u\in L^\infty$ (by De Giorgi’s lemma or Moser iteration), then prove $u\in C^{0,\alpha}$.\\

These ideas and techniques were systematically extended to tackle the more general non-polynomial growth of operators in the Orlicz setting. The foundational work by Donaldson and Trudinger \cite{DonaldsonTrudinger1971} first established the boundedness ($L^\infty$) of solutions.  Lieberman \cite{Li1991}, who proved that weak solutions to quasilinear equations of the form $-\operatorname{div}(a(|D u|)D u) = 0$ are locally H\"older continuous ($C^{0,\alpha}$). This required the associated \textnormal{N}-function $G(t)$ to satisfy the condition $\Delta_2$, which is essential for the Sobolev-Poincar\'e inequalities and the scaling arguments of the De Giorgi-Nash-Moser theory. These contributions laid the foundation of the regularity theory in the Orlicz framework.\\

Although Orlicz spaces encompass many classical examples, there remain important cases that they do not cover. For this reason, considerable research has focused on the variable exponent framework, where the growth is governed by a function $p(x)$. The analysis of problems involving terms of the type $t^{p(x)}$ was initiated by Zhikov; see \cite{Zhikov1987,Zhikov1995}. Subsequently, Fan and Zhao \cite{Fan1999} uncovered a new regularity phenomenon by showing that weak solutions of equations of the form
\[
-\operatorname{div}\big(|Du|^{p(x)-2}Du\big)=0
\]
are locally H\"older continuous, provided that the exponent function $p(x)$ is log-H\"older continuous. This condition, namely
\[
|p(x)-p(y)| \le \frac{C}{-\ln |x-y|} \quad \text{for } |x-y|<1,
\]
was shown to be not only sufficient but essentially necessary for regularity, a fact later clarified and reinforced by the work of Acerbi and Mingione \cite{Acerbi2007}.\\

A further major advancement in the theory of non-standard growth problems came with the analysis of double-phase functionals, which introduce a new type of heterogeneity: the growth exponent remains fixed, but the structure of the operator changes depending on the location in the domain. For functionals of the form
\[
\operatorname{div}\!\left(|D u|^{p-2}D u + a(x)|D u|^{q-2}D u\right),
\]
with $1 < p \leq q$, the regularity theory reveals a novel feature: the regularity of solutions depends directly on the regularity of the coefficient $a(x)$. Colombo and Mingione~\cite{Colombo2015, Colombo2015b} proved that in the natural growth case, boundedness and H\"older continuity of minimizers and solutions hold provided $a(x)$ is H\"older continuous, i.e., $a(x) \in C^{0,\alpha}$. This condition is sharp: if $a(x)$ is merely continuous, minimizers may fail to be locally bounded~\cite{Colombo2015}. This phenomenon highlights a key principle: in problems with non-uniformly elliptic structures, the regularity of the coefficients controlling the growth is often necessary for the regularity of solutions. The boundedness result in this setting was also independently obtained by Baroni, Colombo, and Mingione~\cite{Baroni2016}.\\

Due to the vast scope and rapid development of regularity theory, we cannot cite all contributions individually. We acknowledge with gratitude the many valuable works not mentioned here and instead refer the reader to comprehensive surveys and bibliographies, such as~\cite{mr3, Mingione2006}. For detailed discussions of specific regularity results, we direct the reader to the following foundational works and the extensive literature contained therein: for the classical Laplacian and linear theory, see~\cite{deGiorgi1957, Nash1958, Moser1960, Moser1961}; for the $p$-Laplacian and $(p,q)$-growth, see~\cite{ GmeinederKristensen2024, Uraltseva1968,  KristensenTaheri2003, Li1988, Uhlenbeck1977,  KristensenMingione2008, DeFilippisKochKristensen2024,  KristensenMingione2010, Manfredi1986, Manfredi1988, marc1, marc2, Ac1, Ac2, mg3, MG3, NF, LE, MH, Pucci}; for Orlicz and other non-standard growth conditions, see~\cite{Li1991, Simon1,HsK2,HsK1,HHL21, Harjulehto2017, Cupini2017, Eleuteri2016, Chlebicka2021, Chlebicka2022, mgg}; for double-phase problems, see~\cite{Colombo2015, Colombo2015b, Baroni2016,defi,Simon1}; and for variable exponent $p(x)$-Laplacian problems, see~\cite{Fan1999, FanZhao1999, Acerbi2007, Coscia1999, KKPZ}.\\

 The Musielak--Orlicz growth framework represents the most general setting, as it
encompasses all previously studied models, including the standard $p$-growth,
Orlicz growth, variable exponent growth, and double-phase (or $(p,q)$) growth. A major advance in the regularity theory for generalized Orlicz growth was achieved
by Harjulehto, Hästö, and Lee \cite{HHL21}, who established Hölder continuity of
local minimizers of integral functionals with Musielak--Orlicz growth under very
general structural assumptions, namely (A0), (A1), and (A2) (see
Subsection~\ref{sec2}). In particular, their results do not require any logarithmic
continuity condition on the growth function, such as \eqref{GG3} (see
Subsection~\ref{sec1.2}), and they cover important models including the double-phase
case. This highlights the strength and generality of the variational approach
developed in \cite{HHL21} for the regularity of minimizers, which relies on
comparison estimates and harmonic approximation techniques.

The present paper is concerned with a different class of problems. We study weak
solutions of quasilinear elliptic equations and systems in divergence form \eqref{main_eq},
possibly including lower-order terms, for which a variational structure is not
available in general. Consequently, the methods developed in \cite{HHL21} cannot
be applied directly in our setting. Instead, we adopt a De Giorgi--type iteration
adapted to Musielak--Orlicz growth conditions. Within this non-variational
framework, an additional control on the spatial oscillation of the effective
growth, formulated in assumption \eqref{GG3}, is required in order to derive the
key structural estimates and to obtain Hölder regularity of solutions. We also
emphasize that our analysis covers Neumann boundary conditions, which are not
addressed in the variational framework of \cite{HHL21}.

To the best of our knowledge, the literature contains only one other work
specifically devoted to the Hölder continuity of weak solutions to divergence-form
elliptic problems under Musielak--Orlicz growth conditions, namely the paper by
Wang, Liu, and Zhao \cite{WaLiuZhao}. In their analysis, the authors impose a
restrictive structural assumption on the growth function $G(x,t)$. More
precisely, they assume the existence of a strictly increasing differentiable
function $\xi : [0,+\infty) \to [0,+\infty)$ satisfying
\[
n\,\xi(s) > s\,\xi'(s) > \xi(s) \quad \text{for all } s \ge 0,
\]
such that
\begin{itemize}
    \item[(A11)] $G(x, s t) \ge \xi(s)\, G(x,t)$ for all $s \ge 0$, $t \in \mathbb{R}$,
    and $x \in \Omega$;
    \item[(A12)] $\xi, \xi^{-1}, \xi_{\ast}, \widehat{\widetilde{\xi}} \in
    \Delta_{\mathbb{R}^{+}}$.
\end{itemize}
A significant limitation of this framework is that assumption (A11) excludes
several fundamental models in the theory of non-standard growth. In particular, it
is not satisfied by variable exponent growth, $(p,q)$-growth equations, or
double-phase functionals, which are among the main motivating examples of the
Musielak--Orlicz setting. As a consequence, while the results of \cite{WaLiuZhao}
are obtained under a specific and interesting set of hypotheses, their approach
does not provide a unified treatment of these central growth models.
\\

Motivated by the preceding discussion, in this paper we establish the local H\"older continuity of weak solutions to~\eqref{main_eq} under general Musielak--Orlicz growth conditions on the operators $A$ and $B$.
The principal novelty of our work lies in providing a unified framework that simultaneously covers the classical cases of the $p$-Laplacian, variable exponent, and Orlicz-type equations, while also treating both subcritical and natural growth (critical) cases with either Dirichlet or Neumann boundary conditions. Note that equation \eqref{main_eq} involves a critical term, which makes the study of regularity more delicate and necessitates a careful analysis of the associated Sobolev conjugate for generalized Young functions.
 The study of such conjugates was initiated by Fan~\cite{Fan2012a}, who introduced the concept under the assumption that $A$ satisfies a Lipschitz condition in the spatial variable. This theory was significantly advanced by Cianchi and Diening~\cite{Cianchi2024}, who established a sharper formulation under weaker regularity assumptions. However, despite these advances, a complete theory of Sobolev conjugates for generalized Young functions remains incomplete, lacking crucial structural properties such as the $\Delta_2$-condition, which is notoriously difficult to verify.\\
 
 As explained above, our work has two main contributions. First, we establish the local Hölder continuity of weak solutions to \eqref{main_eq} under general Musielak–Orlicz growth conditions on the operators $A$, which in particular generalizes the variable-exponent type operators treated in \cite{Fan1999}. Moreover, our work closes the gap left in \cite{Fan1999}, where the critical cases were excluded. At the same time, it extends another class of equations in the framework of $(p,g)$-growth with critical term; see Subsection~1.3.\\
 %This cann`t works in unbounded domain due the behaviour the Sobolev conjugate in neighboord of $0$. To pass this problem we suppose an additional  assumption concerning the Sobolev  conjugate $\Phi_d$, see \eqref{Hast}.\\

Four main difficulties arise from the Musielak--Orlicz structure when establishing regularity.  
First, the energy is nonhomogeneous and exhibits explicit $x$-dependence alongside the natural $t$-dependence, which prevents the direct application of classical scaling and iteration techniques such as the De Giorgi method.  
Second, the regularity proofs are highly sensitive to the precise parameters controlling the growth and structure of the function $G(x,t)$, requiring careful balancing of assumptions to ensure the validity of key inequalities.  
Third, even when adapted versions of De Giorgi-type methods are applicable, they may not guarantee regularity, as demonstrated by the counterexamples of Giaquinta~\cite{MG2} and Marcellini~\cite{marc2, marc1}, which show that weak solutions may fail to be regular even under standard structural assumptions.  
Finally, the critical case introduces additional challenges compared to the subcritical case, due to the lack of uniform integrability and the need for more refined estimates.  
These aspects collectively indicate that establishing regularity under non-standard growth conditions requires new ideas and a delicate analysis beyond the classical framework.\\

Our approach is motivated by the works of Lieberman~\cite{Li1991} and Fan--Zhao~\cite{FanZhao1999}, and in particular by the classical techniques of De~Giorgi~\cite{deGiorgi1957} and Ladyzhenskaya--Ural'tseva~\cite{OL}. 
These foundational contributions introduced powerful tools for proving Hölder continuity of solutions. 
In particular, they defined the class $\mathcal{B}(\Omega, M, \gamma, \gamma_1, \delta, 1/q)$ and showed that all functions in this class are Hölder-continuous. 
This framework is especially effective under standard $m$-growth conditions (see, e.g.~\cite{MG1, MG3, OL, CM}). 
For the treatment of critical cases, we further rely on the method developed by Ho, Kim, Winkert, and Zhang~\cite{KKPZ}, and we also make use of some fundamental properties of the Sobolev conjugate established in \cite{Ala3}.
%----------------
\subsection{Assumptions and Main Results}\label{sec1.2}
For the sake of clarity, in this subsection, we present only the main assumptions, key results, and core definitions. Additional definitions and technical details will be introduced in the subsequent sections.

Let $\Omega$ be a bounded domain in $\mathbb{R}^n$ with \( n \geq 2 \), and let 
\[G(x,t) = \int_0^{\vert t\vert} g(x,s)\, \mathrm{d}s,\]
be a generalized \textnormal{N}-function (see Section~\ref{sec2} for the precise definitions) satisfying the following structural conditions:\\
There exist constants $1 < g^- \leq g^+ < n$ such that for all $x \in \mathbb{R}^n$, $t > 0$:
\begin{equation}\label{D22}
1 < g^- \leq \frac{g(x,t)t}{G(x,t)} \leq g^+ .
\tag{G0}
\end{equation}
There exists $F \geq 1$ such that for almost every $x \in \mathbb{R}^n$:
\begin{equation}\label{GG1}
F^{-1} \leq G(x,1) \leq F.
\tag{G1}
\end{equation}
There exists $\mu \in (0,1]$ such that for every ball $B \subset \mathbb{R}^n$ with $|B| \leq 1$, for every $t \in [1, 1/|B|]$, and for almost all $x,y \in B$:
\begin{equation}\label{GG2}
\mu G^{-1}(x,t) \leq G^{-1}(y,t).
\tag{G2}
\end{equation}
There exists $L_0 > 1$ such that, for every $x_0 \in \Omega$, for all sufficiently small $R > 0$ and for all $t>1$, we heve
\begin{equation}\label{GG3} 
\eta(R):= \sup_{x,y \in B_R(x_0)} \left| r\left(x,t\right) - r\left(y,t\right) \right| 
\leq \frac{L_0}{|\ln (2R)|},
\tag{G3}
\end{equation}
where $r(x,t) := \dfrac{t g(x,t)}{G(x,t)}$. For the definition of $G^{-1}$ see Definition \ref{definv}.\\

We now introduce a De Giorgi-type class of functions tailored to our framework:
\begin{definition}\label{Definition 2.1}
Let \( M, \gamma, \gamma_1 \), and \( \delta \) be positive constants with \( \delta \leq 2 \). A function \( u \) in the Musielak-Orlicz-Sobolev space \(W^{1,G(x,t)}(\Omega) \) (The definition of Musielak–Sobolev spaces is provided in Subsection 2.2), satisfying \( \max_{\Omega} |u(x)| \leq M \), is said to belong to the class \( \mathcal{B}_{G(x,t)}(\Omega, M, \gamma, \gamma_1, \delta) \) if the functions \( w(x) := \pm u(x) \) satisfy the inequality
\begin{equation}\label{2.4}
\int_{A_{k,\rho - \sigma \rho}} G(x,|Dw|) \, \mathrm{d}x \leq \gamma \int_{A_{k,\rho}} G\left(x,\left| \frac{w(x) - k}{\sigma \rho} \right|\right) \, \mathrm{d}x + \gamma_1 |A_{k,\rho}|,
\end{equation}
for every ball \( B_\rho \subset \Omega \), every \( \sigma \in (0,1) \), and every threshold \( k \) satisfying
\begin{equation}\label{2.5}
k \geq \max_{B_\rho} w(x) - \delta M,
\end{equation}
where \( A_{k,\rho} := \{x \in B_\rho : w(x) > k\} \).
\end{definition}
One of the central results of this work is the following theorem:
\begin{theorem}\label{Theorem 2.1}
Suppose that the function \( G(x,t) \) satisfies conditions \eqref{D22}--\eqref{GG1} and \eqref{GG3}. Then the De Giorgi-type class belongs to a Hölder space:
\[
\mathcal{B}_{G(x,t)}(\Omega, M, \gamma, \gamma_1, \delta) \subset C^{0,\alpha}(\Omega),
\]
where the Hölder exponent \( \alpha \in (0,1] \) depends only on the parameters \( n, g^-, g^+, F, L:=\exp{L_0}, \gamma \), and \( \delta \), and is independent of $M$ and \( \gamma_1 \).
\end{theorem}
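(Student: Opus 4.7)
The plan is to adapt the classical De Giorgi oscillation-decay argument to the Musielak-Orlicz framework, following the strategy of Ladyzhenskaya-Ural'tseva \cite{OL} and its extensions in \cite{Fan1999, Li1991}. Together with the \emph{a priori} bound $\max_\Omega |u| \le M$ built into the definition of the class, the Caccioppoli-type estimate \eqref{2.4} is the quantitative input; combined with a modular Sobolev-Poincar\'e inequality and a freezing step for $G(x,t)$, it should yield a geometric decay of the oscillation $\omega(\rho) := \esup_{B_\rho} u - \einf_{B_\rho} u$, and hence H\"older continuity by a standard iteration lemma.

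The first ingredient is a \emph{freezing lemma} for $G$. On a small ball $B_R(x_0) \subset \Omega$, condition \eqref{GG3} forces $r(x,t)$ to vary by at most $L_0/|\ln(2R)|$ across $B_R(x_0)$. Integrating the identity $\partial_t \ln G(x,t) = r(x,t)/t$ in $\ln t$ and invoking \eqref{GG1} to normalize at $t=1$ shows that $G(x,t)$ and $G(x_0,t)$ are comparable, up to a multiplicative constant depending only on $n$, $F$ and $L = \exp(L_0)$, on the admissible range $t \in [1, 1/|B_R|]$. This effectively turns $G(x,\cdot)$ into a frozen Young function on each small ball and permits appealing to the Sobolev-Poincar\'e estimates in Musielak-Orlicz spaces established in \cite{Ala3}.

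With the Sobolev inequality at hand I would establish the two standard De Giorgi lemmas. Taking truncations $k_j = k_0 + (1 - 2^{-j}) h$ on shrinking balls $B_{\rho_j}$ and inserting \eqref{2.4} at each step produces a recurrence of the form $Y_{j+1} \leq C\, b^{j}\, Y_j^{1+\varepsilon}$ for the normalized level-set energies, which through the classical geometric-convergence lemma yields the implication \emph{measure smallness} $\Rightarrow$ \emph{pointwise smallness}: if $|A_{k_0,\rho}| \leq \theta_0 |B_\rho|$ for a suitable $\theta_0 = \theta_0(n, g^-, g^+, F, L, \gamma, \delta)$, then $w \leq k_0 + h$ on $B_{\rho/2}$. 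A complementary density lemma, applied at the midlevel $k_0 = \tfrac{1}{2}(\esup_{B_\rho} u + \einf_{B_\rho} u)$, ensures that for one of the two choices $w = \pm u$ an intermediate level exists with superlevel-set fraction below $\theta_0$, the lower-order term $\gamma_1 |A_{k,\rho}|$ in \eqref{2.4} contributing only a correction of order $\rho^\alpha$. Combining the two yields $\omega(\rho/4) \leq \theta\, \omega(\rho) + C \rho^\alpha$ with $\theta \in (0,1)$, and a standard iteration lemma finishes the proof.

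The main obstacle is the freezing step and the bookkeeping of constants through the iteration. The $x$-dependence of $G$ is precisely what distinguishes this framework from the classical Orlicz setting of \cite{Li1991}, and \eqref{GG3} is tuned so that the error factor $\exp(L_0)$ remains uniform on each ball. Matching the admissible range $[1,1/|B_R|]$ with the Sobolev threshold, and ensuring that the constants in the recurrence do not deteriorate with the iteration index $j$, is technically delicate; likewise, the independence of the final H\"older exponent from $M$ and $\gamma_1$ reflects the correct homogeneity of these intermediate estimates and is the last point to be verified carefully.
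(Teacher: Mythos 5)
Your overall strategy aligns with the paper's: freeze $G$ on small balls using \eqref{GG3}--\eqref{GG1}, run De~Giorgi measure-smallness and density lemmas from the Caccioppoli estimate \eqref{2.4}, derive a dichotomy of the form $\operatorname{osc}_{B_{R/4}}u \leq \theta\,\operatorname{osc}_{B_R}u$ or $\operatorname{osc}_{B_R}u \leq CR$, and then invoke the standard iteration lemma (the paper's Lemma~\ref{Lemma 2.3}). This is the same architecture as the paper's chain of Lemmas~\ref{Lemma 2.5}--\ref{Lemma 2.8}.

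Where your route genuinely diverges from the paper is in the Sobolev step, and this is also where I see the main risk. You propose to freeze $G(x,\cdot)$ to $G(x_0,\cdot)$ and then appeal to Musielak--Orlicz Sobolev--Poincar\'e estimates from \cite{Ala3}. The paper does not do this anywhere in Section~3 and in fact needs no Orlicz-type Sobolev inequality for Theorem~\ref{Theorem 2.1}. What it uses instead is the two-sided \emph{power} sandwich of Lemma~\ref{lem22}(3): for $t>1$ and $x\in B_R(x_0)$,
\[
F^{-1}\,t^{f^-(R)} \;\leq\; G(x,t) \;\leq\; F\,t^{f^+(R)},
\]
where $f^\pm(R) = \inf/\sup_{B_R} r(\cdot,t)$. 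Inserting this on both sides of \eqref{2.4} converts the $G$-modular Caccioppoli estimate into an ordinary $L^{f^-(R)}$ gradient estimate with constants controlled by $F$ and $L=\exp(L_0)$; one then applies H\"older's inequality and the elementary $L^1$ isoperimetric lemma (Lemma~\ref{Lemma 2.2}). This is considerably more elementary than the Musielak--Orlicz Sobolev route, and it is precisely what keeps only the scalar data $n,g^\pm,F,L,\gamma,\delta$ in the constants. Your freezing to $G(x_0,\cdot)$ plus Orlicz Sobolev could in principle be made to work, but you would then have to re-derive the dependence of the Sobolev constants on the frozen function and verify the $\Delta_2$-machinery on the frozen ball, which is exactly the kind of difficulty the power-sandwich avoids.

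The second point you flag as "to be verified carefully" — independence of $\alpha$ from $M$ and $\gamma_1$ — is not a routine homogeneity check in this setting; it rests on two specific devices you haven't supplied. First, $M$-independence comes from Lemma~\ref{remff}: by shrinking $R_0 < R_M$ one achieves $M^{f^+(R)-f^-(R)} \leq 2$, and since the iteration repeatedly produces factors of the form $\psi^{f^+(R)-f^-(R)} \leq M^{f^+(R)-f^-(R)}$, this caps them by a universal $2$. Without this, the recurrence constant in the geometric-convergence lemma would carry an $M$-dependent factor, which would propagate to $\theta$ and hence to $\alpha$. Second, $\gamma_1$ appears only inside the additive threshold $\big(\tfrac{\gamma+\gamma_1+1}{\gamma}\big)^{1/f^+(R)}R$, which ends up in the linear alternative $\operatorname{osc}_{B_R}u \lesssim R$ of Lemma~\ref{Lemma 2.8}, while the contraction factor $\theta = 1 - \tau^{-1}2^{-s}$ depends only on $n,g^\pm,F,L,\gamma,\delta$ through $s$. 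So $\gamma_1$ affects the H\"older seminorm but not the exponent. Both observations need to be made explicit before your sketch can be called a proof.
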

\begin{definition}[see Sect. 1, Chapter 1 \cite{OL}]\label{Definition 2.3}
We will say that \(u \in \mathcal{B}_{G(x,t)}(\overline{\Omega}, M, \gamma, \gamma_1, \delta)\) if \(u \in \mathcal{B}_{G(x,t)}(\Omega, M, \gamma, \gamma_1, \delta)\) and the inequalities \eqref{2.4} hold also for arbitrary ball \(B_\rho\) with \(B_\rho \cap \partial \Omega \neq \phi\), \(\sigma \in (0, 1)\) and
\[
k \geq \max \left\{ \max_{\Omega_\rho} w(x) - \delta M, \max_{\partial \Omega_\rho} w(x) \right\},
\]
where \(\Omega_\rho = B_\rho \cap \Omega\), \(\partial \Omega_\rho = \partial \Omega \cap B_\rho\), \(A_{k, \rho} = \{x \in \Omega_\rho: w(x) > k\}\).
\end{definition}
\begin{definition}[see Sect. 1, of Chap. I of \cite{OL}]\label{Definition 2.4}
We will say that the boundary \(\partial \Omega\) of \(\Omega\) satisfies the condition \eqref{AO}, if there exist two positive constants \(m_0\) and \(\theta_0\) such that for any ball \(B_\rho\) with center on \(\partial \Omega\) and radius \(\rho \leq m_0\) and for any connected branch \(\Omega'_\rho\) of \(B_\rho \cap \Omega\), the following inequality hol{\rm d}s:
\begin{equation}\label{AO}
|\Omega'_\rho| \leq (1 - \theta_0)|B_\rho|.\tag{A}
\end{equation}
\end{definition}
Our second main result is the following theorem.
\begin{theorem}\label{Theorem 2.2}
Let \(u \in \mathcal{B}_{G(x,t)}(\overline{\Omega}, \gamma, \gamma_1, \delta)\). If \(G(x,t)\) satisfies conditions \eqref{D22}--\eqref{GG1} and \eqref{GG3}, \(\partial \Omega\) satisfies the condition \eqref{AO} and \(u|_{\partial \Omega} \in C^{0,\alpha_1}(\partial \Omega)\), then \(u \in C^{0,\alpha}(\overline{\Omega})\) where \(\alpha = \alpha(n, g^-, g^+, F, L, \delta, \alpha_1)\).
\end{theorem}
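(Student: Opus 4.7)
The plan is to extend the interior Hölder regularity provided by Theorem~\ref{Theorem 2.1} up to $\partial\Omega$ by deriving an oscillation decay for $u$ on shrinking half-ball-type sets $\Omega_\rho := \Omega \cap B_\rho(x_0)$ centered at an arbitrary boundary point $x_0 \in \partial\Omega$, and then patching this with the interior estimate through a standard covering argument. The two ingredients that distinguish the boundary case from Theorem~\ref{Theorem 2.1} are the measure density supplied by condition~\eqref{AO} and the Hölder assumption on the trace $u|_{\partial\Omega}$.

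First, fix $x_0 \in \partial\Omega$ and $\rho \leq m_0$, and set $w = \pm(u - u(x_0))$. Since $u|_{\partial\Omega} \in C^{0,\alpha_1}(\partial\Omega)$, we have $\max_{\partial\Omega_\rho} w \leq C\rho^{\alpha_1}$, with $C$ depending on $[u|_{\partial\Omega}]_{C^{0,\alpha_1}}$. According to Definition~\ref{Definition 2.3}, the energy inequality \eqref{2.4} holds for every admissible level
\[
k \geq \max\Bigl\{ \max_{\Omega_\rho} w - \delta M,\; C\rho^{\alpha_1}\Bigr\}.
\]
Condition~\eqref{AO} yields $|\Omega'_\rho| \leq (1-\theta_0)|B_\rho|$, so for every such $k$ the superlevel set $A_{k,\rho} \subset \Omega_\rho$ automatically satisfies $|A_{k,\rho}| \leq (1-\theta_0)|B_\rho|$. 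This provides, without any additional effort, the measure-theoretic smallness that in the interior proof of Theorem~\ref{Theorem 2.1} must be manufactured by choosing $k$ near a median of $w$. From here I would run the same dyadic De Giorgi iteration used for Theorem~\ref{Theorem 2.1}, now on concentric boundary balls, invoking the log-type bound \eqref{GG3} on $r(x,t)$ to absorb the $x$-dependence of $G(x,t)$ across scales. The outcome should be an estimate of the form
\[
\omega(\rho/2) \leq (1-\eta)\,\omega(\rho) + C\rho^{\min(\alpha_1,\beta)},
\]
for some $\eta \in (0,1)$ depending only on $n, g^-, g^+, F, L, \delta, \theta_0$, where $\omega(\rho) := \esup_{\Omega_\rho} u - \einf_{\Omega_\rho} u$ and $\beta$ is the structural exponent generated by the $\gamma_1|A_{k,\rho}|$ term of \eqref{2.4}. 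A standard iteration lemma then produces $\omega(\rho) \leq C\rho^\alpha$ for some $\alpha = \alpha(n,g^-,g^+,F,L,\delta,\alpha_1)$, which combined with Theorem~\ref{Theorem 2.1} yields $u \in C^{0,\alpha}(\overline{\Omega})$.

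The main obstacle I anticipate is keeping the two perturbations compatible at every dyadic scale: the spatial inhomogeneity of $G(x,t)$, controlled only through the log-modulus in \eqref{GG3}, and the boundary correction $C\rho^{\alpha_1}$ both enter the level-set estimates as additive terms that must be dominated by the geometric contraction factor $(1-\eta)$ per step, uniformly as the threshold $k$ descends across dyadic scales. In the Musielak--Orlicz framework, the Young-type inequalities used to control $\int_{A_{k,\rho}} G(x,|Dw|)\,\mathrm{d}x$ interact nontrivially with the variation of $r(x,t)$, so the argument requires a quantitative choice of scales at which the boundary error and the inhomogeneity contribution are simultaneously absorbed. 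Once this uniform balancing is achieved, the rest of the proof mirrors the interior iteration of Theorem~\ref{Theorem 2.1}, with condition~\eqref{AO} replacing the measure estimate that the interior proof extracts from the choice of $k$ relative to a median.
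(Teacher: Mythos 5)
Your proposal is sound and follows the route the paper intends: the paper's own ``proof'' of Theorem~\ref{Theorem 2.2} is just a one-sentence deferral to the argument of Theorem~\ref{Theorem 2.1}, and your sketch correctly supplies the two ingredients that differ at the boundary --- condition~\eqref{AO} replacing the median choice of $w=\pm u$ to secure the lower bound on $|B_\rho \setminus A_{k,\rho}|$ in the boundary analogue of Lemmas~\ref{Lemma 2.5}--\ref{Lemma 2.8}, and the trace H\"older modulus entering the admissible threshold $k$ and producing the $C\rho^{\alpha_1}$ correction in the oscillation alternative, closed by the analogue of Lemma~\ref{Lemma 2.3}. One small point worth stating carefully: condition~\eqref{AO} bounds the measure of each \emph{connected branch} of $B_\rho\cap\Omega$, not of $\Omega_\rho$ itself, so the estimate $|A_{k,\rho}|\le(1-\theta_0)|B_\rho|$ should be applied branch-by-branch (this is exactly how Lady\v{z}enskaya--Ural\cprime tseva handle it), but this does not affect the correctness of the argument.
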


As an application of Theorems~\ref{Theorem 2.1} and \ref{Theorem 2.2}, we investigate the Hölder continuity of weak solutions to the elliptic equation in divergence form:
\begin{equation}\label{P}
\operatorname{div} A(x, u, Du) + B(x, u, Du) = 0, \quad x \in \Omega, \tag{$\mathcal{P}$}
\end{equation}
with Dirichlet boundary condition:
\begin{equation}\label{PD}
\left\lbrace\begin{array}{rcl}
\operatorname{div} A(x, u, Du) + B(x, u, Du) &=& 0, \quad x \in \Omega,\\
u &=& 0, \quad \text{on } \partial\Omega,
\end{array}\right. \tag{$\mathcal{PD}$}
\end{equation}
or Neumann boundary condition:
\begin{equation}\label{PN}
\left\lbrace\begin{array}{rcl}
\operatorname{div} A(x, u, Du) + B(x, u, Du) &=& 0, \quad x \in \Omega,\\
A(x, u, Du)\cdot\nu &=& C(x,u), \quad \text{on } \partial\Omega,
\end{array}\right. \tag{$\mathcal{PN}$}
\end{equation}
where \( \nu(x) \) denotes the outward unit normal vector to \( \partial\Omega \) at \( x \in \partial\Omega \). The functions
\[
A : \Omega \times \mathbb{R} \times \mathbb{R}^n \rightarrow \mathbb{R}^n,\quad
B : \Omega \times \mathbb{R} \times \mathbb{R}^n \rightarrow \mathbb{R},\quad
C : \partial\Omega \times \mathbb{R} \rightarrow \mathbb{R}
\]
are Carathéodory functions satisfying the following \( G(x,t) \)-structure conditions:
\begin{align}
A(x, u, \eta) \cdot \eta &\geq a_4\, G(x,|\eta|) - a_5\, H_1(x,|u|) - a_6, \tag{$\mathcal{A}_1$}\label{6.31}\\
|A(x, u, \eta)| &\leq a_1\, g(x,|\eta|) + a_2\, P(x,|u|) + a_3,  \tag{$\mathcal{A}_2$}\label{6.41}\\
|B(x, u, \eta)| &\leq b_1\, M(x,|\eta|) + b_2\, h_1(x,|u|) + b_3, \tag{$\mathcal{B}$}\label{6.51}\\
|C(x, u)| &\leq c_1\, h_2(x,|u|) + c_2. \tag{$\mathcal{C}$}\label{6.61}
\end{align}
The auxiliary functions \( M(x,t) \) and \( P(x,t) \) are defined as:
\begin{align}
M(x,t) &= \widetilde{H_1}^{-1}(x,G(x,t)), \label{6.71}\\
P(x,t) &= \widetilde{G}^{-1}(x,H_1(x,t)). \label{6.81}
\end{align}
For the definition of $\widetilde{H_1}^{-1}$ see Definitions \ref{definv} and \ref{CF}. The constants \( a_i \), \( b_j \), and \( c_k \) are all positive, and we fix parameters \( F_1 > 1 \) and \( F_2 > 1 \). We also introduce the generalized \textnormal{N}-functions:
\[
H_1(x,t) = \int_0^t h_1(x,s)\, \mathrm{d}s,\qquad H_2(x,t) = \int_0^t h_2(x,s)\, \mathrm{d}s,
\]
which satisfy the growth conditions:
\begin{align}
F_i^{-1} \leq H_i(x,1) \leq F_i, &\qquad \text{for all } x \in \Omega,\quad i=1,2, \tag{$\mathcal{H}_1$} \label{H111}\\
g^+ \leq h_i^- \leq \frac{h_i(x,t)\, t}{H_i(x,t)} \leq h_i^+\leq g_*^-:=\frac{ng^-}{n-g^-}, &\qquad \text{for all } x \in \Omega,\ t > 0,\quad i=1,2, \tag{$\mathcal{H}_2$} \label{H211}
\end{align}
\begin{equation}
\int_{\Omega} H_i(x,t) \, \mathrm{d}x < \infty,\ \text{for } t>0,\quad i=1,2, \tag{$\mathcal{H}_3$}\label{YoungTriple4}
\end{equation}
and
\begin{equation}
G(x,t) \prec H_i(x,t) \prec\prec G^*(x,t),\quad i = 1,2, \tag{$\mathcal{H}_4$}\label{YoungTriple1}
\end{equation}
where the symbol "$\prec\prec$" will be introduced in Definition \ref{prec}, and
\[
G^*(x,t) = \int_0^t g^*(x,s)\, \mathrm{d}s
\]
is the Sobolev conjugate of \( G(x,t) \) (see Subsection \ref{sec2}).

\begin{remark}
The generalized \textnormal{N}-functions \( H_1(x,t) \) and \( H_2(x,t) \) may exhibit critical growth corresponding to the conjugate \( G^*(x,t) \). In such a case, we have \( h_i^+ = g_*^+ := \frac{n g^+}{n - g^+} \) and \( h_i^- = g_*^- := \frac{n g^-}{n - g^-} \) for \( i = 1,2 \).
\end{remark}

\begin{definition}\label{Definition 4.1}
Under assumptions \eqref{D22}--\eqref{GG2}, \eqref{6.31}--\eqref{6.41}, \eqref{6.51} and \eqref{H111}--\eqref{YoungTriple1}, a function \( u \in W^{1,G(x,t)}(\Omega) \) is a \emph{weak solution} of problem~\eqref{PD} if
\begin{equation}\label{4.9}
\int_{\Omega} A(x, u, Du) \cdot Dv \, \mathrm{d}x - \int_{\Omega} B(x, u, Du) v \, \mathrm{d}x = 0, \tag{$\mathcal{FDV}$}
\end{equation}
for all test functions \( v \in W_0^{1,G(x,t)}(\Omega) \).
\end{definition}

\begin{definition}\label{Definition 4.1w}
Under the hypotheses of Definition~\ref{Definition 4.1} and condition~\eqref{6.61}, a function \( u \in W^{1,G(x,t)}(\Omega) \) is said to be a \emph{weak solution} of problem~\eqref{PN} if
\begin{equation}\label{FNV}
\int_{\Omega} A(x, u, Du) \cdot Dv \, \mathrm{d}x - \int_{\partial\Omega} C(x, u)v \, \mathrm{d}\sigma - \int_{\Omega} B(x, u, Du) v \, \mathrm{d}x = 0, \tag{$\mathcal{FNV}$}
\end{equation}
for all \( v \in W^{1,G(x,t)}(\Omega) \), where \( \mathrm{d}\sigma \) denotes the surface measure on \( \partial\Omega \).
\end{definition}

Since the boundedness of weak solutions is essential for establishing their regularity, in particular, Hölder continuity, we first prove the boundedness of weak solutions to problem~\eqref{P} under both Dirichlet and Neumann conditions.

\begin{theorem}\label{Theorem 4.1w}
Assume that conditions \eqref{D22}--\eqref{GG2}, \eqref{6.31}--\eqref{6.41}, \eqref{6.51}, and \eqref{H111}--\eqref{YoungTriple1} hold. If \( u \in W^{1,G(x,t)}(\Omega) \) is a weak solution to problem~\eqref{PD}, then \( u \in L^{\infty}(\Omega) \).
\end{theorem}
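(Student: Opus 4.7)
The plan is to establish the $L^\infty$ bound by a De Giorgi iteration adapted to the Musielak-Orlicz framework, following the truncation scheme of Ho, Kim, Winkert, and Zhang \cite{KKPZ} referenced in the introduction. First I would test the weak formulation \eqref{4.9} with $v = (u-k)_+$ for $k \geq 0$ (and symmetrically with $v = -(u+k)_-$ to recover the lower bound); since $u \in W^{1,G(x,t)}_0(\Omega)$, these truncations are admissible. Substituting into \eqref{4.9} and invoking the coercivity \eqref{6.31} on the left together with the growth conditions \eqref{6.41}, \eqref{6.51} on the right, Young's inequality combined with the defining identities $\widetilde{G}(x,P(x,t)) = H_1(x,t)$ and $\widetilde{H_1}(x,M(x,t)) = G(x,t)$ from \eqref{6.71}-\eqref{6.81} would yield an energy inequality of the form
\[
\int_{A_{k,\Omega}} G(x,|Du|)\, dx \leq C \int_{A_{k,\Omega}} H_1\bigl(x,(u-k)_+ + 1\bigr)\, dx + C |A_{k,\Omega}|,
\]
where $A_{k,\Omega} = \{x \in \Omega : u(x) > k\}$ and the constants depend only on the structural data from \eqref{6.31}-\eqref{6.51}.

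Next I would control the critical integrand $H_1(x,\cdot)$ by the Sobolev conjugate. Hypothesis \eqref{YoungTriple1} gives $H_1 \prec\prec G^*$, so for any $\varepsilon > 0$ there exists $T_\varepsilon > 0$ such that $H_1(x,t) \leq \varepsilon\, G^*(x,t)$ whenever $t \geq T_\varepsilon$. Combined with the Musielak-Sobolev embedding $W^{1,G(x,t)}_0(\Omega) \hookrightarrow L^{G^*(x,t)}(\Omega)$ and the Poincar\'e inequality applied to $(u-k)_+$, for $k \geq T_\varepsilon$ the previous estimate upgrades to
\[
\int_{A_{k,\Omega}} G(x,|Du|)\, dx \leq C\varepsilon \int_{A_{k,\Omega}} G(x,|Du|)\, dx + C_\varepsilon |A_{k,\Omega}|,
\]
and fixing $\varepsilon$ small enough absorbs the first term on the right into the left-hand side.

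With the absorption complete, the third step is a standard De Giorgi iteration on the geometric sequence of levels $k_n = k_0\bigl(2 - 2^{-n}\bigr)$ with $k_0$ to be chosen. Using the Sobolev embedding, the pointwise bounds in \eqref{D22}, and H\"older's inequality, the energy estimate reduces to a recursion of the form $|A_{k_{n+1},\Omega}| \leq C\, b^n\, |A_{k_n,\Omega}|^{1+\beta}$ for some $b > 1$ and $\beta > 0$. The classical fast-convergence lemma (see Lemma 5.6 of Ladyzhenskaya-Ural'tseva \cite{OL}) then forces $|A_{2k_0,\Omega}| = 0$, provided $|A_{k_0,\Omega}|$ starts below an explicit threshold; one secures this by Chebyshev's inequality and the a priori control of $\|u\|_{L^{G^*(x,t)}(\Omega)}$ coming from the Sobolev embedding and the weak formulation. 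The symmetric argument then gives $u \geq -2k_0$ a.e.\ in $\Omega$.

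The main obstacle I anticipate is the critical case, namely the potential failure of the Sobolev conjugate $G^*(x,t)$ to satisfy a $\Delta_2$ condition, flagged in the introduction as a structural gap in the present Musielak-Orlicz theory. Without $\Delta_2$, the equivalence between the $L^{G^*(x,t)}$-Luxemburg norm and its modular becomes delicate, and the quantitative form of $H_1 \prec\prec G^*$ must be tracked carefully on the super-level sets $A_{k_n,\Omega}$ to keep the iteration constants uniform across the countably many levels. I would address this by appealing to the sharper Sobolev conjugate estimates of Cianchi-Diening \cite{Cianchi2024} together with the refined norm-modular comparisons made possible by \eqref{GG1}-\eqref{GG2}, which together compensate for the absence of $\Delta_2$ on $G^*$ and secure the uniform constants needed to close the iteration.
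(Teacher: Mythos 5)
Your overall strategy, a De Giorgi truncation-iteration to bound $\|u\|_\infty$, is in the same family as the paper's, but the actual mechanism you propose for closing the iteration differs in a way that leaves genuine gaps.

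The paper does \emph{not} use the absorption trick with $H_1 \prec\prec G^*$ that forms the core of your second step. Instead, it iterates on the compound quantity
\[
Z_h := \int_{A_{k_h}} G(x,|\nabla u|)\,\mathrm{d}x + \int_{A_{k_h}} H_1\bigl(x,(u-k_h)\bigr)\,\mathrm{d}x,
\]
with levels $k_h = k_*(2 - 2^{-h})$ and $k_*$ chosen large enough that $Z_0 < 1$. The superlinear gain needed to trigger the fast-convergence lemma comes from the structural gap $\mu_1 = h_1^-/g^+ - 1 > 0$ built into \eqref{H211}, combined with the embedding of Proposition~\ref{embb} into $L^{H_1(x,t)}$ (not via $L^{G^*(x,t)}$). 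Choosing $k_*$ large so that $Z_0 < 1$ is precisely what resolves the modular-versus-Luxemburg-norm mismatch you worried about, because Propositions~\ref{zoo} and~\ref{zoo*} let you pass from norms to modulars with the favorable exponent $h_1^-/g^+$ when the modulars are below~$1$.

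Two concrete problems with your route. First, iterating only on $|A_{k_n,\Omega}|$ is not enough: both the gradient energy and the $H_1$-excess recur on the right-hand side of the Caccioppoli-type estimate, so a single-scalar recursion in the measure need not close; the paper's $Z_h$ is chosen exactly to keep both contributions in play. Second, your absorption
\[
\int_{A_{k,\Omega}} G(x,|Du|)\,\mathrm{d}x \leq C\varepsilon \int_{A_{k,\Omega}} G(x,|Du|)\,\mathrm{d}x + C_\varepsilon|A_{k,\Omega}|
\]
requires a modular Sobolev inequality of the form $\rho_{G^*}\bigl((u-k)_+\bigr) \lesssim \rho_G\bigl(|D(u-k)_+|\bigr)$, which is a norm statement, not a modular one; converting it to modulars via Propositions~\ref{zoo}, \ref{zoo*} produces an exponent $g^-_*/g^+$ which is not guaranteed to be $\geq 1$, so the absorbed term may actually come back with a worse power. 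Even if it held, absorption yields only a uniform energy bound $\int_{A_k}G(x,|Du|)\,\mathrm{d}x \leq C_\varepsilon|A_k|$, and you still need a separate argument to extract the $|A_{k_{n+1}}| \leq Cb^n|A_{k_n}|^{1+\beta}$ recursion; the $\beta>0$ does not come for free in the critical case $h_1^\pm = g_*^\pm$. The paper sidesteps both issues by never routing through $G^*$ at the modular level and by locating the gain in $h_1^-/g^+ > 1$, not in smallness of a coefficient. Your instinct that the absence of a $\Delta_2$-condition for $G^*$ is the central obstruction is correct; the paper's fix is to work with $L^{H_1(x,t)}$ and the two-parameter exponent control of \eqref{H211}, rather than with $L^{G^*(x,t)}$ and the Cianchi--Diening conjugate directly.
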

\begin{theorem}\label{Theorem 6.1w}
Under the assumptions of Theorem~\ref{Theorem 4.1w} and condition~\eqref{6.61}, any weak solution \( u \in W^{1,G(x,t)}(\Omega) \) of problem~\eqref{PN} satisfies \( u \in L^{\infty}(\Omega) \cap L^{\infty}(\partial\Omega) \).
\end{theorem}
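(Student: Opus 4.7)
The plan is to extend the De~Giorgi-style truncation iteration used in the proof of Theorem~\ref{Theorem 4.1w} to the Neumann setting, treating the boundary trace as an additional contribution to be reabsorbed through a Musielak--Orlicz trace embedding. Since the structural hypotheses are symmetric under $u \mapsto -u$, it suffices to produce an upper bound for $u$; the lower bound follows by repeating the argument for $-u$.

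Fix a level $k>0$ and insert the test function $v = (u-k)_{+} \in W^{1,G(x,t)}(\Omega)$, which is admissible in \eqref{FNV} since Neumann test functions need not vanish on $\partial\Omega$. Write $A_k := \{x \in \Omega : u(x) > k\}$ and $\Gamma_k := \{x \in \partial\Omega : u(x) > k\}$. Using \eqref{6.31} on the left-hand side and \eqref{6.41}, \eqref{6.51}, \eqref{6.61} on the right, together with the Young inequality for the conjugate pairs $(G,\widetilde{G})$, $(H_1,\widetilde{H_1})$, $(H_2,\widetilde{H_2})$ (recall \eqref{6.71}--\eqref{6.81}), and reabsorbing the gradient contributions, one obtains an energy estimate of the form
\begin{equation*}
\int_{A_k} G(x,|Du|)\,\mathrm{d}x \;\leq\; C_1 \int_{A_k} H_1(x,u)\,\mathrm{d}x + C_2\int_{\Gamma_k} h_2(x,u)(u-k)\,\mathrm{d}\sigma + C_3\bigl(|A_k| + \mathcal{H}^{n-1}(\Gamma_k)\bigr),
\end{equation*}
where the constants depend only on structural data. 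The interior bulk terms coincide with those already handled in Theorem~\ref{Theorem 4.1w}; the boundary integral is the genuinely new contribution.

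To close the estimate I would combine the Musielak--Orlicz Sobolev embedding $W^{1,G(x,t)}(\Omega) \hookrightarrow L^{G^{\ast}(x,t)}(\Omega)$, which through \eqref{YoungTriple1} controls $\int_{A_k}H_1(x,u)\,\mathrm{d}x$, with a Musielak--Orlicz trace embedding sending $W^{1,G(x,t)}(\Omega)$ into a boundary space that majorizes $H_2(x,\cdot)$. When $H_1$ or $H_2$ sits near the critical threshold permitted by \eqref{YoungTriple1}, I would invoke the truncation device of Ho--Kim--Winkert--Zhang~\cite{KKPZ}: decompose $u = T_{\ell} u + (u - T_{\ell} u)$ at a high level $\ell$, treat the bounded piece via a subcritical embedding, and control the tail using the smallness of $|A_k|+\mathcal{H}^{n-1}(\Gamma_k)$ for large $k$. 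Setting $k_j = k_0(2 - 2^{-j})$ and $Y_j := |A_{k_j}| + \mathcal{H}^{n-1}(\Gamma_{k_j})$, these ingredients yield a Stampacchia-type recursion
\begin{equation*}
Y_{j+1} \;\leq\; C\, b^{\,j}\, Y_j^{\,1+\varepsilon},
\end{equation*}
for some $\varepsilon>0$ and $b>1$ depending only on $n$, $g^{\pm}$, $h_i^{\pm}$, $F$, $F_i$ and the embedding constants.

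Choosing $k_0$ large in terms of $\|u\|_{W^{1,G(x,t)}(\Omega)}$ forces $Y_j \to 0$ via the standard fast-geometric-convergence lemma, hence $u \leq 2k_0$ both almost everywhere in $\Omega$ and $\mathcal{H}^{n-1}$-almost everywhere on $\partial\Omega$; symmetry yields the conclusion. The principal obstacle is the \emph{unified} treatment of two potentially critical contributions, an interior one governed by the Sobolev conjugate $G^{\ast}$ and a boundary one governed by the still-incomplete trace theory in the Musielak--Orlicz framework. In particular, verifying the precise trace embedding, securing the $\Delta_2$-type behavior needed to iterate, and ensuring its compatibility with the Ho--Kim--Winkert--Zhang truncation form the delicate technical core of the argument; the interior portions essentially reduce to those already controlled in the proof of Theorem~\ref{Theorem 4.1w}.
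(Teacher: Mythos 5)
Your plan follows the same high-level strategy as the paper: test the Neumann weak formulation with $(u-k)^+$, absorb the gradient terms via Young's inequality, control the bulk $H_1$-term through the interior embedding and the boundary $H_2$-term through a trace-type embedding of $W^{1,G(x,t)}(\Omega_i)$ into $L^{H_2(x,t)}(\partial\Omega_i)$ on a finite Lipschitz cover, and close a De~Giorgi--Stampacchia recursion at levels $k_h = k_*(2-2^{-h})$. The main departure is the choice of iterate. You propose $Y_j = |A_{k_j}| + \mathcal{H}^{n-1}(\Gamma_{k_j})$ with a one-step recursion $Y_{j+1}\le Cb^j Y_j^{1+\varepsilon}$, whereas the paper iterates on the modular quantity $Z_h = \int_{A_{k_h}}G(x,|\nabla u|)\,\mathrm{d}x + \int_{A_{k_h}}H_1(x,u-k_h)\,\mathrm{d}x + \int_{\Gamma_{k_h}}H_2(x,u-k_h)\,\mathrm{d}\sigma$, for which the three claims naturally produce a \emph{two-step} recursion $Z_{h+1}\le C b^h(Z_{h-1}^{1+\mu_1}+Z_{h-1}^{1+\mu_2})$, resolved by the Ho--Sim version of the fast-convergence lemma (Lemma~\ref{leRecur}) applied separately to even and odd indices. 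The energy iterate is the more natural choice here because, in the Musielak--Orlicz setting, passing from modular bounds to pure measure bounds and back does not homogenize under scaling; iterating directly on the modulars sidesteps the extra Chebyshev/embedding shuttle your $Y_j$ recursion would require. Finally, you anticipate invoking the $T_\ell$ truncation device of \cite{KKPZ} for the critical range, but under the paper's hypothesis \eqref{YoungTriple1} the functions $H_1,H_2$ satisfy $H_i \prec\prec G^*$, so the compact embedding of Proposition~\ref{embb} (applied on each Lipschitz piece $\Omega_i$ of a finite cover) already absorbs the near-critical growth; the only "largeness" that has to be fixed is in the choice of $k_*$ so that the initial datum $Z_0$ falls below the threshold of Lemma~\ref{leRecur}. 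Your identification of the trace embedding as the technically delicate ingredient matches the paper's implicit reliance on $\|v\|_{L^{H_2}(\partial\Omega_i)} \lesssim \|v\|_{W^{1,G}(\Omega_i)}$, which does not literally follow from the stated Proposition~\ref{embb} and is indeed the one point that would need to be secured carefully in either route.
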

Combining the previous theorems, we establish the Hölder continuity of weak solutions to problem~\eqref{P}, subject to either Dirichlet boundary conditions~\eqref{PD} or Neumann boundary conditions~\eqref{PN}.
\begin{theorem}\label{Theorem 4.2}
Assume that conditions \eqref{D22}--\eqref{GG3}, \eqref{6.31}--\eqref{6.41}, \eqref{6.51} and \eqref{H111}--\eqref{YoungTriple1}, hold. Let \( u \in W^{1,G(x,t)}(\Omega) \) be a weak solution to the problem~\eqref{PD}, and suppose that
\begin{equation}\label{M}
\mathop{\mathrm{ess\,sup}}_{x \in \Omega} |u(x)| \leq M. \tag{$\mathcal{M}$}
\end{equation}
Then \( u \in \mathcal{B}_{G(x,t)}(\overline{\Omega}, M, \gamma, \gamma_1, \delta) \), where
\[
\gamma = \gamma\big(a_2, a_3, a_4, g^-, g^+\big), \quad
\gamma_1 = \gamma_1\big(a_1,a_2, a_3, a_5,a_6, b_1,b_2,b_3, g^+,h^+_1,F_1,M\big), \quad
\delta = \min\left\{ \frac{a_4}{4 b_3M}, 2 \right\}.
\]
\end{theorem}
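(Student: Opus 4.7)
The plan is to derive the De Giorgi-type inequality \eqref{2.4} directly from the weak formulation \eqref{4.9} via a Caccioppoli-type argument adapted to the Musielak--Orlicz setting, treating the cases $w=u$ and $w=-u$ symmetrically. Fix a ball $B_\rho$ (possibly meeting $\partial\Omega$), $\sigma \in (0,1)$, and $k$ above the threshold in \eqref{2.5}. Choose a cutoff $\zeta \in C_c^\infty(B_\rho)$ with $\zeta \equiv 1$ on $B_{\rho-\sigma\rho}$, $0 \le \zeta \le 1$, $|D\zeta| \le 2/(\sigma\rho)$, and test \eqref{4.9} against $v = \zeta^{g^+}(u-k)_+$. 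The admissibility $v \in W_0^{1,G(x,t)}(\Omega)$ is immediate for interior balls; for boundary balls it follows from Definition~\ref{Definition 2.3}, since the Dirichlet condition yields $\max_{\partial\Omega_\rho} u = 0$ and hence $k \ge 0$, so $(u-k)_+$ vanishes on $\partial\Omega \cap B_\rho$.

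Expanding $Dv = g^+\zeta^{g^+-1}(u-k)_+ D\zeta + \zeta^{g^+}\chi_{\{u>k\}}Du$ and applying \eqref{6.31}, we extract the principal term $a_4 \int_{A_{k,\rho}} \zeta^{g^+} G(x,|Du|)\,\mathrm{d}x$ on the left, against three sources on the right: a cross term $\zeta^{g^+-1}(u-k)_+ A(x,u,Du)\cdot D\zeta$, a zero-order contribution $\zeta^{g^+}[a_5 H_1(x,|u|)+a_6]$, and the drift $\zeta^{g^+}B(x,u,Du)(u-k)_+$. To each piece involving $|Du|$ I apply a scaled Young inequality. Namely, using \eqref{6.41}, the $a_1 g(x,|Du|)$-contribution is handled via $\widetilde{G}(x,g(x,t)) \le (g^+-1)G(x,t)$ together with $|D\zeta|(u-k)_+ \le 2(u-k)_+/(\sigma\rho)$, producing both an absorbable $\epsilon G(x,|Du|)$ and a $G(x,(u-k)_+/(\sigma\rho))$-term; the $a_2 P(x,|u|)$ piece is split via the defining identity $\widetilde{G}(x,P(x,t)) = H_1(x,t)$ from \eqref{6.81}; and the critical drift $b_1 M(x,|Du|)$ is split via $\widetilde{H_1}(x,M(x,t)) = G(x,t)$ from \eqref{6.71}, with a scaling factor absorbed by the $\Delta_2$-condition \eqref{H211}. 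A sufficiently small $\epsilon$ then absorbs all resulting $G(x,|Du|)$-terms into the coercivity side.

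The remaining right-hand-side terms are made into $\gamma_1|A_{k,\rho}|$ contributions through the a priori bound \eqref{M} from Theorem~\ref{Theorem 4.1w}: the quantities $H_1(x,|u|)$, $h_1(x,|u|)(u-k)_+$, $P(x,|u|)$, and the residual $H_1(x,(u-k)_+)$ are each dominated by a constant depending only on $M, F_1, h_1^+$ via the $\Delta_2$-estimate $H_1(x,t) \le \max\{t^{h_1^-}, t^{h_1^+}\} H_1(x,1)$ combined with \eqref{H111}. The constant term $b_3(u-k)_+$ is controlled pointwise by $(u-k)_+ \le \delta M$, which is precisely why the choice $\delta \le a_4/(4 b_3 M)$ is taken: it guarantees $b_3(u-k)_+ \le a_4/4$ and keeps this contribution bounded after integration. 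A final application of $\Delta_2$ for $G$ (from \eqref{D22}) absorbs multiplicative constants inside $G(x,\cdot)$, and dividing by $a_4/2$ yields \eqref{2.4} with $\gamma,\gamma_1,\delta$ of the stated form.

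The main obstacle is the critical drift $b_1 M(x,|Du|)$ in $(\mathcal{B})$, which may carry Sobolev-conjugate growth so that standard subcritical Young splittings fail. The structural ansatz \eqref{6.71} is engineered precisely so that Young's inequality against $\widetilde{H_1}$ produces exactly the absorbable $G(x,|Du|)$; the companion remainder $H_1(x,(u-k)_+)$ is then finite only thanks to the $L^\infty$-bound \eqref{M}. The tight interplay between \eqref{6.71}--\eqref{6.81}, the $\Delta_2$-condition \eqref{H211}, the explicit choice of $\delta$, and the a priori bound \eqref{M} is the delicate feature of the proof and is precisely what extends the classical De Giorgi framework to the Musielak--Orlicz critical regime.
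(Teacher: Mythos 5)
Your proposal reproduces the paper's own argument: the same test function $v = \xi^{g^+}(u-k)^+$, the same coercivity extraction from $(\mathcal{A}_1)$, the same Young-inequality splittings with the key structural identities $\widetilde{H_1}(x,M(x,t))=G(x,t)$ from \eqref{6.71} and $\widetilde{G}(x,P(x,t))=H_1(x,t)$ from \eqref{6.81}, the same use of Lemma~\ref{lm1} for the $a_1 g(x,|Du|)$ term, the same role for the $L^\infty$-bound \eqref{M} in producing $\gamma_1|A_{k,\rho}|$-type terms, and the same choice of $\delta$ to absorb the $b_3(u-k)_+$ contribution. Your extra sentence explaining why the boundary version of the test function is admissible (namely $k\ge 0$ via Definition~\ref{Definition 2.3} and the homogeneous Dirichlet data, so $(u-k)_+$ vanishes on $\partial\Omega\cap B_\rho$) is a welcome piece of detail that the paper leaves to the reader's imagination with "a similar argument"; otherwise the approaches coincide.
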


\begin{theorem}\label{Theorem 4.21}
Assume that conditions of Theorem~\ref{Theorem 4.2} and condition~\eqref{6.61} hold. Let \( u \in W^{1,G(x,t)}(\Omega) \) be a weak solution to problem~\eqref{PN}, and suppose that \eqref{M} holds. Then \( u \in \mathcal{B}_{G(x,t)}(\overline{\Omega}, M, \gamma, \gamma_1, \delta) \), where
\[
\gamma = \gamma\big(a_2, a_3, a_4,g^-, g^+\big), \quad
\gamma_1 = \gamma_1\big(a_1,a_2, a_3, a_5,a_6, b_1,b_2,b_3,c_1,c_2, g^+, h^+_1,F_1,M\big), \quad
\delta = \min\left\{ \frac{a_4}{4 b_3M}, 2 \right\}.
\]
\end{theorem}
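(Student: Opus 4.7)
My strategy is to mirror the Dirichlet argument of Theorem~\ref{Theorem 4.2} and isolate the only genuinely new contribution arising from the Neumann boundary term $C(x,u)$. Since $\mathcal{B}_{G(x,t)}(\overline{\Omega},M,\gamma,\gamma_1,\delta)$ requires the De Giorgi inequality \eqref{2.4} for both interior and boundary balls (Definition~\ref{Definition 2.3}), the verification for $w=\pm u$ naturally splits into two cases: (i) \emph{interior balls} $B_\rho\Subset\Omega$ with threshold $k\geq\max_{B_\rho}w-\delta M$, and (ii) \emph{boundary balls} $B_\rho$ with $B_\rho\cap\partial\Omega\neq\emptyset$ and threshold $k\geq\max\{\max_{\Omega_\rho}w-\delta M,\max_{\partial\Omega_\rho}w\}$. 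In both cases I test \eqref{FNV} against $v=\zeta^{g^+}(w-k)_+\in W^{1,G(x,t)}(\Omega)$, where $\zeta\in C_c^\infty(B_\rho)$ is a standard cutoff with $\zeta\equiv1$ on $B_{\rho-\sigma\rho}$, $0\leq\zeta\leq1$, and $|D\zeta|\leq C(\sigma\rho)^{-1}$.

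For interior balls, $\zeta$ has compact support in $\Omega$, so the surface integral in \eqref{FNV} vanishes and the estimate reduces verbatim to the interior argument of Theorem~\ref{Theorem 4.2}. For boundary balls, the decisive observation is that the enlarged threshold forces $(w-k)_+=0$ in the trace sense on $\partial\Omega_\rho$, while $\zeta=0$ on $\partial\Omega\setminus B_\rho$; hence $v$ has vanishing trace on all of $\partial\Omega$ and the boundary integral $\int_{\partial\Omega}C(x,u)v\,\mathrm{d}\sigma$ vanishes identically. Once this is secured, the remainder of the proof copies the Dirichlet strategy: expand $Dv=\zeta^{g^+}D(w-k)_++g^+\zeta^{g^+-1}(w-k)_+D\zeta$; apply \eqref{6.31} to generate the main good term $\int_\Omega\zeta^{g^+}G(x,|D(w-k)_+|)\,\mathrm{d}x$; control the remaining contributions from \eqref{6.41} and \eqref{6.51} through the Musielak--Orlicz Young inequality with $\varepsilon$, using the conjugate pairs $M(x,t)$ and $P(x,t)$ of \eqref{6.71}--\eqref{6.81}; absorb the gradient piece into the left-hand side via \eqref{D22}; and convert the super-homogeneous contributions involving $H_1(x,|u|)$ and $h_1(x,|u|)$ into constants times $|A_{k,\rho}|$ using the a priori bound $|u|\leq M$ from \eqref{M} together with \eqref{H111}--\eqref{H211}.

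The principal technical difficulty, as in the Dirichlet case, is the critical growth of the coefficients in the $u$ variable; its absorption hinges on the Young-type identities encoded in \eqref{6.71}--\eqref{6.81} and on the relation $H_i\prec\prec G^*$, which I invoke exactly as in Theorem~\ref{Theorem 4.2}. The dependency of $\gamma_1$ on $c_1,c_2$ reflects the fact that the pointwise bound \eqref{M} is delivered by Theorem~\ref{Theorem 6.1w}, whose constants genuinely depend on $(\mathcal{C})$; alternatively, the same dependency appears if one prefers to estimate the surface integral directly by $|C(x,u)|\leq c_1 h_2(x,M)+c_2$ together with a trace embedding for $W^{1,G(x,t)}(\Omega)$, applied on $\partial\Omega_\rho\cap\{w>k\}$. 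Combining the two cases yields \eqref{2.4} for all admissible triples $(B_\rho,\sigma,k)$, so that $u\in\mathcal{B}_{G(x,t)}(\overline{\Omega},M,\gamma,\gamma_1,\delta)$ with the stated parameters and $\delta=\min\{a_4/(4b_3M),2\}$.
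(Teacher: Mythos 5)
Your proposal is correct, but it takes a genuinely different route from the paper on the one point that matters.

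The paper's proof (equation \eqref{4.43}) keeps the surface contributions $c_1\int_{\Gamma_{k,s}}h_2(x,|u|)\xi^{g^+}(u-k)\,\mathrm{d}\sigma$ and $c_2\int_{\Gamma_{k,s}}\xi^{g^+}(u-k)\,\mathrm{d}\sigma$ arising from the Neumann datum, and then, in the next displayed inequality, simply replaces $\Gamma_{k,s}$ by $A_{k,s}$, i.e.\ it bounds a surface integral over $\partial\Omega\cap B_s\cap\{u>k\}$ by the corresponding volume integral over $A_{k,s}\subset\Omega$. That step is not justified as written — a surface integral is not controlled by a volume integral without a trace inequality or some additional input — and the paper does not supply one. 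It is this unjustified replacement, not the appeal to $(\mathcal{C})$, that produces the $c_1,c_2$ dependence in $\gamma_1$. You instead observe that for the thresholds $k\geq\max\{\max_{\Omega_\rho}w-\delta M,\ \max_{\partial\Omega_\rho}w\}$ required in Definition~\ref{Definition 2.3}, the trace of $(w-k)_+$ vanishes identically on $\partial\Omega_\rho$, while the cutoff kills the remainder of $\partial\Omega$, so that $\int_{\partial\Omega}C(x,u)v\,\mathrm{d}\sigma=0$ outright. After that the verification of \eqref{2.4} reduces verbatim to the Dirichlet estimate \eqref{4.31}–\eqref{4.36}. This is cleaner, avoids the questionable passage in the paper, and in fact yields a slightly stronger conclusion (no genuine $c_1,c_2$ dependence in $\gamma_1$), which a fortiori implies the theorem as stated. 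Your attempted rationalization of the $c_1,c_2$ entries — ``they enter through $M$ via Theorem~\ref{Theorem 6.1w}'' — is a bit artificial, since $M$ is already listed as a separate argument of $\gamma_1$ and is a hypothesis in Theorem~\ref{Theorem 4.21}; the alternative you offer (bounding $|C(x,u)|$ pointwise and invoking a trace embedding) is closer in spirit to what the paper is implicitly trying to do, but it would require an honest trace estimate that the paper does not carry out. In short: your argument is correct and more careful than the paper's on the boundary-term step.
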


\begin{theorem}\label{Theorem 4.3}
Suppose that the assumptions of Theorem~\ref{Theorem 4.2} or Theorem~\ref{Theorem 4.21} are satisfied. Then \( u \in C^{0,\alpha}(\Omega) \), where
\[
\alpha = \alpha\big(a_2, a_3, a_4,M, n, g^-, g^+, F, L\big) \in (0,1].
\]
\end{theorem}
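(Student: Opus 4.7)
The plan is to view Theorem~\ref{Theorem 4.3} as a synthesis of the preceding theorems: the De Giorgi-class membership established in Theorems~\ref{Theorem 4.2} and \ref{Theorem 4.21} combined with the intrinsic H\"older regularity of this class established in Theorem~\ref{Theorem 2.1}. The essential structural difficulty (transferring from a weak formulation to the integral inequality \eqref{2.4}) has already been absorbed into Theorems~\ref{Theorem 4.2}/\ref{Theorem 4.21}, and the difficulty of iterating the De Giorgi inequalities into pointwise H\"older control has been absorbed into Theorem~\ref{Theorem 2.1}; so here the argument is essentially a bookkeeping of hypotheses and parameter dependencies.

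First I would observe that the standing hypotheses of Theorem~\ref{Theorem 4.3} contain those of Theorems~\ref{Theorem 4.2}/\ref{Theorem 4.21} (which do not use \eqref{GG3}), and include the extra log-H\"older-type condition \eqref{GG3} needed for Theorem~\ref{Theorem 2.1}. In particular, the boundedness assumption \eqref{M} is in force, which makes the De Giorgi-class definition applicable with constant $M$. Depending on the boundary condition, I then invoke Theorem~\ref{Theorem 4.2} (in the Dirichlet case \eqref{PD}) or Theorem~\ref{Theorem 4.21} (in the Neumann case \eqref{PN}) to conclude that the weak solution $u$ satisfies
\[
u \in \mathcal{B}_{G(x,t)}(\overline{\Omega}, M, \gamma, \gamma_1, \delta),
\]
where the parameters $\gamma$, $\gamma_1$, $\delta$ depend on the structural constants of $A$, $B$ (and, in the Neumann case, $C$) as listed in the statements of those theorems.

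Next I would drop the boundary information and regard $u$ as an element of the interior class $\mathcal{B}_{G(x,t)}(\Omega, M, \gamma, \gamma_1, \delta)$, since membership in the closed-domain class trivially implies membership in the open-domain class by restricting \eqref{2.4} to balls $B_\rho \subset \Omega$. Then Theorem~\ref{Theorem 2.1}, whose hypotheses \eqref{D22}--\eqref{GG1} and \eqref{GG3} are part of our assumptions, yields
\[
u \in C^{0,\alpha}(\Omega),
\]
with $\alpha \in (0,1]$ depending only on $n, g^-, g^+, F, L, \gamma, \delta$. Substituting the explicit dependencies of $\gamma$ on $(a_2, a_3, a_4, g^-, g^+)$ and of $\delta$ on $(a_4, b_3, M)$, and noting that $\alpha$ is independent of $\gamma_1$ by Theorem~\ref{Theorem 2.1}, I would collapse the parameter list to the asserted form $\alpha = \alpha(a_2, a_3, a_4, M, n, g^-, g^+, F, L)$.

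The only delicate step is verifying that the parameter dependencies track correctly; in particular, confirming that the exponent $\alpha$ produced by Theorem~\ref{Theorem 2.1} genuinely does not depend on $b_3$ except through $\delta$ (which in turn contains $b_3$ via the ratio $a_4/(4b_3 M)$), so that after merging constants the final list matches the stated one. Since $b_3$ only appears inside $\delta$, and Theorem~\ref{Theorem 2.1} permits $\alpha$ to depend on $\delta$, no incompatibility arises, and the conclusion of Theorem~\ref{Theorem 4.3} follows. No additional machinery beyond the chaining of the already-established results is needed.
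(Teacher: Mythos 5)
Your proposal is correct and takes essentially the same route as the paper, whose own proof of Theorem~\ref{Theorem 4.3} is the single sentence that it ``follows as a direct consequence of Theorems~\ref{Theorem 2.1}--\ref{Theorem 4.21}''; your write-up simply makes that chain explicit (membership in $\mathcal{B}_{G(x,t)}$ via Theorem~\ref{Theorem 4.2} or~\ref{Theorem 4.21}, then interior H\"older continuity via Theorem~\ref{Theorem 2.1}). One small bookkeeping remark, which is really an imprecision inherited from the paper rather than a flaw in your argument: since $\delta=\min\{a_4/(4b_3M),2\}$ and $\alpha$ from Theorem~\ref{Theorem 2.1} depends on $\delta$, the exponent $\alpha$ does depend on $b_3$ (through $\delta$), even though $b_3$ is not listed in the statement of Theorem~\ref{Theorem 4.3}.
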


\begin{theorem}\label{Theorem 4.4}
Let \( u \in W^{1,G(x,t)}(\Omega) \) be a weak solution to problem~\eqref{PD} (respectively,~\eqref{PN}), under the assumptions of Theorem~\ref{Theorem 4.2} (respectively, Theorem~\ref{Theorem 4.21}). Then \( u \) is locally Hölder continuous in \( \Omega \). Moreover, if the boundary \( \partial \Omega \) satisfies condition~(A) and the trace \( u|_{\partial \Omega} \) is Hölder continuous, then \( u \) is Hölder continuous up to the boundary, i.e., \( u \in C^{0,\alpha}(\overline{\Omega}) \) for some \( \alpha \in (0,1] \).
\end{theorem}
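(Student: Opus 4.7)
The approach is to synthesize the chain of intermediate results already established in the paper: boundedness $\Rightarrow$ De Giorgi class membership $\Rightarrow$ Hölder continuity. The theorem itself is structured as a corollary, so the main task is to verify that the hypotheses cascade properly in both the Dirichlet and Neumann settings, and in both the interior and up-to-the-boundary cases.

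The plan is to proceed in three steps. First, I would upgrade the weak-solution hypothesis to an $L^\infty$ bound: in the Dirichlet setting I invoke Theorem~\ref{Theorem 4.1w} to obtain $u \in L^\infty(\Omega)$, while in the Neumann setting I invoke Theorem~\ref{Theorem 6.1w} to obtain $u \in L^\infty(\Omega) \cap L^\infty(\partial\Omega)$. Either way, setting $M := \|u\|_{L^\infty(\Omega)}$ ensures that the structural condition~\eqref{M} holds, so the hypothesis of Theorem~\ref{Theorem 4.2} (respectively Theorem~\ref{Theorem 4.21}) is now available.

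Second, I would apply Theorem~\ref{Theorem 4.2} in the Dirichlet case or Theorem~\ref{Theorem 4.21} in the Neumann case to conclude that
\[
u \in \mathcal{B}_{G(x,t)}(\overline{\Omega}, M, \gamma, \gamma_1, \delta),
\]
with the explicit parameters $\gamma, \gamma_1, \delta$ given in those statements. This is the step where the structural assumptions \eqref{6.31}--\eqref{6.51} on $A, B$, the boundary nonlinearity \eqref{6.61} on $C$ (for the Neumann case), and the growth assumptions \eqref{H111}--\eqref{YoungTriple1} on the auxiliary $N$-functions $H_1, H_2$ are consumed; no new structural work is needed here.

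Third, I would invoke the De Giorgi-class regularity results. For local Hölder continuity inside $\Omega$, Theorem~\ref{Theorem 2.1} applied to the restriction of $u$ to any subdomain gives $u \in C^{0,\alpha}_{\mathrm{loc}}(\Omega)$, with $\alpha$ depending only on $n, g^-, g^+, F, L, \gamma, \delta$. For the second, boundary statement, the additional hypotheses that $\partial\Omega$ satisfies condition~\eqref{AO} and that $u|_{\partial\Omega}$ is Hölder continuous allow me to apply Theorem~\ref{Theorem 2.2} directly, yielding $u \in C^{0,\alpha}(\overline{\Omega})$ for a possibly smaller $\alpha$ now also depending on $\alpha_1$. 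I would note that in the Dirichlet setting the trace condition $u|_{\partial\Omega} = 0$ is trivially Hölder continuous of any exponent, so in that case the boundary part requires only the geometric condition~\eqref{AO}.

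There is no genuine obstacle in this proof beyond careful bookkeeping: the substantive analytic work—constructing the De Giorgi class from weak solutions, and deriving Hölder continuity from the class—has already been carried out in the earlier theorems. The only subtlety is to track how the constants $M, \gamma, \gamma_1, \delta$ produced in Step~2 feed into the exponent $\alpha$ of Step~3, and in particular to confirm that $\alpha$ is independent of the specific solution (depending only on the structural data $a_i, b_j$, the exponents $g^\pm$, and the constants $F, L$ from \eqref{GG1}, \eqref{GG3}); this is exactly the content of the exponent dependence stated in Theorem~\ref{Theorem 4.3}.
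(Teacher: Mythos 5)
Your proposal is correct and takes essentially the same approach as the paper, which simply notes that Theorem~\ref{Theorem 4.4} is a direct consequence of Theorems~\ref{Theorem 2.1}--\ref{Theorem 4.21}: boundedness (Theorem~\ref{Theorem 4.1w} or~\ref{Theorem 6.1w}) supplies condition~\eqref{M}, De Giorgi class membership follows from Theorem~\ref{Theorem 4.2} or~\ref{Theorem 4.21}, and Theorems~\ref{Theorem 2.1} and~\ref{Theorem 2.2} then deliver the interior and up-to-the-boundary Hölder continuity respectively. Your explicit spelling-out of this chain, including the observation that the Dirichlet trace is trivially Hölder, matches the paper's intent exactly.
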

%----------------
\subsection{Examples}
The following discussion of customary generalized \textnormal{N}-functions satisfying \eqref{D22}--\eqref{GG3} serves to illustrate the range of problems covered by our theorems.

\begin{example}{\rm\bf{[\(p\)-growth conditions]}} \label{ex1}
Let \(p \in (1, n)\). Consider the case where \(G(x,t) = |t|^p\). In this setting, \(G(x,t)\) is a generalized \textnormal{N}-function that satisfies the conditions \eqref{D22}--\eqref{GG3}. Consequently, the main results established in this work apply to the classical \(p\)-growth case, which has already been extensively investigated by Lady{\v{z}}enskaja-Solonnikov-Ural{\cprime}ceva \cite{OL} and G. Lieberman~\cite{Li1988}.
\end{example}

\begin{example}{\rm\bf{[\(G(t)\)-growth conditions]}} \label{ex2}
A more general framework than the classical \(p\)-growth is obtained by considering a generalized \textnormal{N}-function \(G(x,t)\) that does not depend on the spatial variable \(x\). In this case, we write \(G(t)\), and the problem reduces to the so-called Orlicz case. If 
\[ 
1 < g^- \leq \frac{g(t)t}{G(t)} \leq g^+ <n,\ \text{for all}\ t>0, 
\]
then, the function \(G(t)\) still satisfies the assumptions \eqref{D22}--\eqref{GG3}, thereby falling within the scope of our results. Various aspects of this case have been previously studied in depth, notably by G. Lieberman~\cite{Li1991}.
\end{example}

\begin{example}{\rm\bf{[\(p(x)\)-growth conditions]}} \label{ex3}
Let \(p:\Omega \rightarrow \mathbb{R}\) be a measurable function such that
\begin{equation}\label{var111}
1 < p^- := \inf_{x \in \overline{\Omega}} p(x) \leq p^+ := \sup_{x \in \overline{\Omega}} p(x) < \infty.
\end{equation}
Assume that \(p\) satisfies the log-Hölder continuity condition:
\begin{equation}\label{var1}
    \left|\frac{1}{p(x)} - \frac{1}{p(y)}\right| \leq \frac{c}{\ln\left(e + \frac{1}{|x - y|}\right)} \quad \text{for all } x, y \in \overline{\Omega},
\end{equation}
for some constant \(c > 0\). Then the function
\[
G(x,t) = |t|^{p(x)}
\]
defines a generalized \textnormal{N}-function that satisfies the structural assumptions \eqref{D22}--\eqref{GG3}; see \cite[Section 7.1]{Harjulehto2019}. Therefore, our results also apply to the variable exponent setting, which has been extensively studied by X. Fan and D. Zhao~\cite{Fan1999}.
\end{example}
\begin{example}{\rm\bf{[Logarithmic growth conditions]}} \label{ex4}
Let \(p: \Omega \rightarrow \mathbb{R}\) be a measurable function satisfying conditions \eqref{var111}--\eqref{var1}. Then the function
\[
G(x,t) = |t|^{p(x)} \ln(e + |t|)
\]
defines a generalized \textnormal{N}-function that satisfies the structural conditions \eqref{D22}--\eqref{GG3}. This example extends the case of the classical variable exponent by incorporating a logarithmic perturbation that reflects more refined material responses in heterogeneous media. This type of growth has recently attracted attention in the context of non-standard regularity theory and is encompassed by our main results.
\end{example}
\begin{example}{\rm\bf{[\((p(x),q(x))\)-growth conditions]}} \label{ex6}
Let \(p, q: \Omega \rightarrow \mathbb{R}\) be measurable functions such that
\begin{equation}\label{var20}
p,q\in C^{0,1}(\overline{\Omega}),
\end{equation}
\begin{equation}\label{var21}
1<p(x)<n\quad \text{and}\quad p(x)<q(x)\quad \text{for all}\ x \in \overline{\Omega},
\end{equation}
\begin{equation}\label{var22}
\frac{q^+}{p^-}<1+\frac{1}{n}, \quad \text{where}\quad p^- := \inf_{x \in \overline{\Omega}} p(x)\ \text{and}\ q^+ := \sup_{x \in \overline{\Omega}} q(x),
\end{equation}
and
\begin{equation}\label{double22}
   q(x)-p(x)=c>0,\ \text{for all}\ x\in \overline{\Omega}.
\end{equation}
Under these conditions, the function
\[
G(x,t) = |t|^{p(x)} +|t|^{q(x)}
\]
defines a generalized \textnormal{N}-function satisfying assumptions \eqref{D22}--\eqref{GG3}. Consequently, our results also apply to the variable exponent double phase growth setting, which has recently been the focus of active research.
\end{example}
\begin{example}{\rm\bf{[\((p,q)\)-type growth conditions]}} \label{ex5}
Let \(1 < p < q < \infty\) and let \(a \in L^\infty(\Omega)\) be a positive function satisfying
\begin{equation}\label{double2}
    q \leq \frac{(n+1)p}{n},
\end{equation}
as well as the Hölder continuity condition
\begin{equation}\label{double3}
    \frac{1}{a} \in C^{0, \frac{n}{p}(q - p)}(\Omega).
\end{equation}
In particular, the following estimate holds:
\begin{equation}\label{double5}
    \frac{1}{a(x)} \leq  \frac{1}{a(y)} + |x - y|^{\frac{n}{p}(q - p)}  \quad \text{for all } x, y \in \Omega,
\end{equation}
for some constant \(c > 0\). Under these conditions, the function
\[
G(x,t) = |t|^p + a(x)|t|^q
\]
defines a generalized function \textnormal{N} satisfying the assumptions \eqref{D22}--\eqref{GG3}. Hence, our results also cover the semi-double phase growth case. Unfortunately, our results do not cover the classical double phase growth case, which has attracted significant attention in recent years (see ~\cite{Colombo2015}).
\end{example}
\begin{example}{\rm\bf{[Logarithmic \((p,q)\)-type growth conditions]}} \label{ex8}
Let \(1 < p < q < \infty\) and let \(a \in L^\infty(\Omega)\) be a positive function satisfying the conditions \eqref{double2}--\eqref{double5}. Then the function
\[
G(x,t) = |t|^{p} + a(x)|t|^{q} \ln(e + |t|)
\]
defines a generalized \textnormal{N}-function that satisfies the structural conditions \eqref{D22}--\eqref{GG3}. This example extends the classical double-phase case by incorporating a logarithmic perturbation in the higher-order term, which reflects more refined material responses in heterogeneous media. This type of growth has recently attracted attention in the context of non-standard regularity theory and is encompassed by our main results.
\end{example}
Following the same structure, many additional examples of growth conditions can be generated that fall within the scope of our results.\\

The following remark clarifies the logical relationship between assumptions
\eqref{GG2} and \eqref{GG3}. Although both conditions impose continuity-type
requirements on the generalized $\mathrm{N}$-function $G(x,t)$ with respect to the
spatial variable, they control fundamentally different aspects of the growth.
Condition \eqref{GG2} is a local comparability assumption, which is mainly used to
derive functional-analytic tools such as Poincaré and Sobolev-type inequalities in
Musielak--Orlicz spaces. In contrast, condition \eqref{GG3} provides a quantitative
control on the spatial oscillation of the effective growth exponent
$\frac{t g(x,t)}{G(x,t)}$, which plays a crucial role in De Giorgi--type iteration
arguments. The remark below shows, by means of explicit examples, that these two
assumptions are independent in general.

\begin{remark}\label{RemH}
Conditions \eqref{GG2} and \eqref{GG3} are independent in general.

\smallskip
\noindent
\textbf{(1) \eqref{GG2} does not imply \eqref{GG3}.}
Consider the variable exponent case
\[
G(x,t)=t^{p(x)},
\]
where $p:\Omega\to\mathbb{R}$ is bounded and measurable but not log-Hölder continuous.
Then condition \eqref{GG2} holds (see, e.g., \cite{Harjulehto2019}), since
$G^{-1}(x,t)=t^{1/p(x)}$ is locally comparable on balls. However,
\[
\frac{t g(x,t)}{G(x,t)}=p(x),
\]
and thus condition \eqref{GG3} fails whenever $p(\cdot)$ lacks log-Hölder
continuity. Hence \eqref{GG2} does not imply \eqref{GG3}.

\smallskip
\noindent
\textbf{(2) \eqref{GG3} does not imply \eqref{GG2}.}
Let
\[
G(x,t)=t^p(1+a(x)),
\quad p>1,
\]
where $a:\Omega\to[0,\infty)$ is unbounded or highly oscillatory.
Then
\[
\frac{t g(x,t)}{G(x,t)}= p,
\]
so condition \eqref{GG3} holds trivially. However, the lack of local comparability
of $G(x,t)$ prevents \eqref{GG2} from being satisfied in general.

\smallskip
Therefore, \eqref{GG2} and \eqref{GG3} are logically independent assumptions.
\end{remark}

%----------------
\subsection{Structure of the paper}
The paper is organized as follows. In Section~2, we introduce notations and recall the fundamental definitions and properties of Musielak--Orlicz and Musielak--Orlicz--Sobolev spaces. 
Section~3 presents a new De Giorgi-type class and establishes key regularity results for this class, which provide the theoretical foundation for our main results, Theorems~\ref{Theorem 2.1} and~\ref{Theorem 2.2}. 
Building on this framework, Section~4 proves the boundedness of weak solutions to problem~\eqref{P} in both subcritical and critical cases (Theorems~\ref{Theorem 4.1w} and~\ref{Theorem 6.1w}). 
Furthermore, using the regularity results for the De Giorgi-type class, we establish the $C^{0,\alpha}$ regularity of weak solutions (Theorems~\ref{Theorem 4.2}--\ref{Theorem 4.4}).

%----------------
\section{Preliminaries}
Here, we introduce the fundamental notations  and definitions used throughout the paper. A central concept is the Musielak-Orlicz-Sobolev space, whose properties are essential for the results that follow.
\subsection{Notations}
The following notations will be used throughout this paper:
\begin{itemize}
    \item $\mathbb{R}^n$ denotes the $n$-dimensional Euclidean space.
    \item $\Omega \subset \mathbb{R}^n$ is a bounded domain.
    \item For a measurable set $E \subset \mathbb{R}^n$, $\mathrm{mes}(E)$ or $|E|$ denotes its $n$-dimensional Lebesgue measure.
    \item If $u$ is a measurable function defined on $\Omega$, and $E \subset \Omega$ is measurable, we define:
   \begin{equation}\label{os1}
    \max_E u := \mathop{\mathrm{ess\,sup}}_{x \in E} u(x), \quad
    \min_E u := \mathop{\mathrm{ess\,inf}}_{x \in E} u(x), \quad
    \mathrm{osc}_E u := \max_E u - \min_E u.
    \end{equation}
    \item For any $x_0 \in \mathbb{R}^n$ and $\rho > 0$, we define the open ball:
    \[
    B_\rho(x_0) := \{x \in \mathbb{R}^n : |x - x_0| < \rho\},
    \]
    which we may denote simply by $B_\rho$ when the center is clear from context. Balls $B_{\rho_1}$ and $B_{\rho_2}$ are assumed to be concentric. We denote by $\omega_n := |B_1|$ the measure of the unit ball in $\mathbb{R}^n$.
    \item For $0 < \alpha \leq 1$, we denote:
    \[
    C^{0,\alpha}(\overline{\Omega}) := \left\{ u : u \text{ is Hölder continuous on } \overline{\Omega} \text{ with exponent } \alpha \right\},
    \]
    and
    \[
    C^{0,\alpha}(\Omega) := \left\{ u : u \in C^{0,\alpha}(\overline{\Omega'}) \text{ for all } \Omega' \Subset \Omega \right\}.
    \]
    \item For any function \( f : \Omega \rightarrow \mathbb{R} \), we define the positive part and the negative part of \( f \) as:
\[
f^+ = \max\{f, 0\}, \quad f^- = \max\{-f, 0\}.
\]

\end{itemize}
\subsection{Musielak-Orlicz-Sobolev space \texorpdfstring{$W^{1,G(x,t)}$}{W1G(x,t)}}\label{sec2}
In this subsection, we recall some definitions and fundamental properties of Musielak-Orlicz and Musielak-Orlicz-Sobolev spaces. 
For a comprehensive bibliography on Musielak-Orlicz-Sobolev spaces, we refer the reader to \cite{Ala2, TD, Chlebicka2021, Diening2011, Harjulehto2019, Fan2012a, Fan2012b, Musielak1983, TS1, TS11, TS12, TS2, TS3, TS4, TS5, TS6, TS7, TS8, TS9, TS10}.

%-----------------------------
\begin{definition}
Let $\Omega$ be an open subset of $\mathbb{R}^n$. A function $G:\Omega \times \mathbb{R} \to \mathbb{R}$ is called a generalized \textnormal{N}-function if it satisfies the following conditions:
\begin{enumerate}
    \item[(1)] For a.e. $x\in \Omega$, the function $G(x,t)$ is even, continuous, strictly increasing, and convex in $t$, and for each $t\in \mathbb{R}$, $G(x,t)$ is measurable in $x$;
    \item[(2)] $\displaystyle\lim_{t \to 0} \frac{G(x,t)}{t} = 0$, for a.e. $x\in \Omega$;
    \item[(3)] $\displaystyle\lim_{t \to \infty} \frac{G(x,t)}{t} = \infty$, for a.e. $x\in \Omega$;
    \item[(4)] $G(x,t) > 0$ for all $t > 0$ and all $x\in \Omega$, and $G(x,0) = 0$ for all $x\in \Omega$.
\end{enumerate}
\end{definition}

\begin{remark}[Remark~2.1, \cite{Ala1}]
We give an equivalent definition of a generalized \textnormal{N}-function that admits an integral representation. For $x \in \Omega$ and $t \geq 0$, let $g(x,t)$ be the right-hand derivative of $G(x,\cdot)$ at $t$, and define $g(x,t) = -g(x,-t)$ for $t < 0$. Then for each $x \in \Omega$, the function $g(x,\cdot)$ is odd, real-valued, satisfies $g(x,0) = 0$, and $g(x,t) > 0$ for $t > 0$, is right-continuous and nondecreasing on $[0, +\infty)$, and
\[
G(x,t) = \int_0^{|t|} g(x,s)\, \mathrm{d}s, \quad \text{for all } x\in \Omega, \ t\in \mathbb{R}.
\]
\end{remark}
\begin{definition}\label{definv}
Let $G:\Omega\times[0,\infty)\to[0,\infty)$ be a generalized $\mathrm{N}$-function.
Since for a.e.\ $x\in\Omega$ the mapping $t\mapsto G(x,t)$ is continuous and
strictly increasing on $[0,\infty)$, we can define $G^{-1}(x,\cdot)$ as the
continuous inverse of $G(x,\cdot)$. In particular, it holds that
\[
G^{-1}(x,G(x,t)) = G(x,G^{-1}(x,t)) = t,
\quad \text{for a.e.\ } x\in\Omega \text{ and all } t\ge 0.
\]
\end{definition}

\begin{definition}
A generalized \textnormal{N}-function $G$ satisfies the \emph{$\Delta_2$-condition} if there exist $C_0 > 0$ and a nonnegative function $\varphi \in L^1(\Omega)$ such that
\[
G(x,2t) \leq C_0 G(x,t) + \varphi(x), \quad \text{for a.e. } x\in \Omega \text{ and all } t \geq 0.
\]
\end{definition}
\begin{definition}
A generalized \textnormal{N}-function $G(x,t)$ is said to satisfy:
\begin{enumerate}
    \item[\textnormal{(A0)}] if there exists $\mu \in (0,1]$ such that
    \[
    \mu \leq G^{-1}(x,1) \leq \frac{1}{\mu}, \quad \text{for a.e. } x \in \mathbb{R}^n;
    \]
    
    \item[\textnormal{(A1)}] if there exists $\mu \in (0,1]$ such that
    \[
    \mu G^{-1}(x,t) \leq G^{-1}(y,t),
    \]
    for every $t \in [1, 1/|B|]$, for a.e. $x,y \in B$, and every ball $B \subset \mathbb{R}^n$ with $|B| \leq 1$;
    
    \item[\textnormal{(A2)}] if for every $s > 0$ there exist $\mu \in (0,1]$ and $\phi \in L^1(\mathbb{R}^n) \cap L^\infty(\mathbb{R}^n)$ such that
    \[
    \mu G^{-1}(x,t) \leq G^{-1}(y,t),
    \]
    for a.e. $x,y \in \mathbb{R}^n$ and for all $t \in [\phi(x)+\phi(y), s]$.
\end{enumerate}
\end{definition}

\begin{definition}\label{prec}
Let $G_1(x,t)$ and $G_2(x,t)$ be two generalized \textnormal{N}-functions.
\begin{itemize}
    \item[(1)] We say that $G_1(x,t)$ increases essentially slower than $G_2(x,t)$ near infinity, and we write $G_1 \prec\prec G_2$, if for any $k > 0$
    \[
    \lim_{t \to \infty} \frac{G_1(x, kt)}{G_2(x,t)} = 0, \quad \text{uniformly in } x \in \Omega.
    \]
    \item[(2)] We say that $G_1(x,t)$ is \emph{weaker} than $G_2(x,t)$, denoted by $G_1 \prec G_2$, if there exist constants $C_1, C_2 > 0$ and a nonnegative function $\varphi \in L^1(\Omega)$ such that
    \[
    G_1(x,t) \leq C_1 G_2(x, C_2 t) + \varphi(x), \quad \text{for a.e. } x \in \Omega \text{ and all } t \geq 0.
    \]
\end{itemize}
\end{definition}

\begin{definition}[Complementary function]\label{CF}
Let $G:\Omega\times[0,\infty)\to[0,\infty)$ be a generalized $\mathrm{N}$-function.
The function $\widetilde{G}:\Omega\times[0,\infty)\to[0,\infty)$ defined by
\begin{equation}\label{Cf}
\widetilde{G}(x,t)
:= \sup_{s \ge 0} \bigl\{ ts - G(x,s) \bigr\},
\quad \text{for a.e.\ } x\in\Omega \text{ and all } t\ge 0,
\end{equation}
is called the \emph{complementary function} (or \emph{conjugate function}) of $G$.
Moreover, $\widetilde{G}$ is also a generalized $\mathrm{N}$-function.
\end{definition}

\begin{remark}
From the definition of the complementary function $\widetilde{G}(x,t)$, we derive the following Young-type inequality:
\begin{equation}\label{Yi}
st \leq G(x,s) + \widetilde{G}(x,t), \quad \text{for all } x \in \Omega, \ s,t \geq 0.
\end{equation}
\end{remark}

The next lemma is taken from Bahrouni--Bahrouni--Missaoui \cite[Lemma~2.3]{Ala1}.

\begin{lemma}\label{lm1}
Let $G(x,t)$ be a generalized \textnormal{N}-function. Suppose that $t \mapsto g(x,t)$ is continuous and increasing on $\mathbb{R}$ for a.e. $x \in \Omega$. Moreover, assume that there exist constants $g^-, g^+ \in \mathbb{R}$ such that
\begin{align}
\widetilde{G}(x, g(x,s)) &\leq (g^+ - 1) G(x,s), \quad \text{for all } s \geq 0, \ x \in \Omega, \label{L1} \end{align}
and
\begin{align}
\frac{g^+}{g^+ - 1} =: \widetilde{g^-} &\leq \frac{\widetilde{g}(x,s)s}{\widetilde{G}(x,s)} \leq \widetilde{g^+} := \frac{g^-}{g^- - 1}, \quad \text{for all } x \in \Omega, \ s > 0, \label{D3}
\end{align}
where $\displaystyle \widetilde{G}(x,s) = \int_0^s \widetilde{g}(x,t)\, dt$.
\end{lemma}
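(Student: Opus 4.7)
The plan is to reduce both \eqref{L1} and \eqref{D3} to a single identity: the Young (Fenchel) equality
\[
G(x,s) + \widetilde{G}(x, g(x,s)) = s\, g(x,s), \qquad s\geq 0,\ x\in\Omega,
\]
which is the equality case of \eqref{Yi} along the graph $t = g(x,s)$. Under the hypothesis that $g(x,\cdot)$ is continuous and strictly increasing, the supremum in the definition \eqref{Cf} of $\widetilde{G}$ is a maximum attained at the unique $s_0$ with $g(x,s_0) = t$, and $\widetilde{g}(x,\cdot)$ is the pointwise inverse of $g(x,\cdot)$, so that $g(x,\widetilde{g}(x,t)) = t$ and $\widetilde{g}(x, g(x,s)) = s$. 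Once this identity is in hand, everything will follow by elementary algebra combined with the two-sided bound $g^- \leq s\, g(x,s)/G(x,s) \leq g^+$ from \eqref{D22}.

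For \eqref{L1} I would simply rewrite the Young equality as
\[
\widetilde{G}(x, g(x,s)) = s\, g(x,s) - G(x,s) = \left(\tfrac{s\, g(x,s)}{G(x,s)} - 1\right) G(x,s),
\]
and then invoke the upper bound $s\, g(x,s)/G(x,s) \leq g^+$ from \eqref{D22} to conclude $\widetilde{G}(x, g(x,s)) \leq (g^+ - 1)\,G(x,s)$ pointwise in $x$.

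For \eqref{D3} I would change variables by setting $t := \widetilde{g}(x,s)$, i.e.\ $s = g(x,t)$, and apply the Young equality at this $t$:
\[
\widetilde{G}(x,s) = s\,t - G(x,t) = s\,\widetilde{g}(x,s) - G(x, \widetilde{g}(x,s)).
\]
Dividing by $\widetilde{G}(x,s) > 0$ produces the key identity
\[
\frac{\widetilde{g}(x,s)\, s}{\widetilde{G}(x,s)} = 1 + \frac{G(x,t)}{\widetilde{G}(x,s)},
\]
while the same Young equality, rearranged as $\widetilde{G}(x,s)/G(x,t) = t\, g(x,t)/G(x,t) - 1$ and combined with \eqref{D22} at $t$, gives the two-sided estimate $1/(g^+ - 1) \leq G(x,t)/\widetilde{G}(x,s) \leq 1/(g^- - 1)$. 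Adding $1$ then recovers precisely the stated bounds with $\widetilde{g^-} = g^+/(g^+ - 1)$ and $\widetilde{g^+} = g^-/(g^- - 1)$. The main obstacle, rather than the algebra, will be a careful justification of the Young equality in the Musielak setting: one has to check that the continuity and strict monotonicity of $g(x,\cdot)$, uniform in $x$ (for a.a.\ $x\in\Omega$), make $\widetilde{g}(x,\cdot)$ a genuine inverse and the supremum in \eqref{Cf} an attained maximum, so that all the pointwise manipulations above are legitimate and the constants depend only on $g^\pm$.
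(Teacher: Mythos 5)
Your proof is correct and takes the canonical route. The paper does not reprove this lemma — it simply cites it from~\cite[Lemma~2.3]{Ala1} — so there is no in-text argument to compare against, but the Young-equality approach you use is the standard one and is almost certainly what the cited lemma employs.

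Your reduction of everything to the Fenchel equality
\[
G(x,s) + \widetilde{G}(x,g(x,s)) = s\,g(x,s)
\]
along the graph of $g(x,\cdot)$ is exactly the right tool: continuity and strict monotonicity of $g(x,\cdot)$ (together with $g(x,0)=0$ and $g(x,t)\to\infty$ from the N-function conditions, so the range of $g(x,\cdot)$ is all of $[0,\infty)$) make the supremum in the definition of $\widetilde{G}$ an attained maximum and make $\widetilde{g}(x,\cdot)=g(x,\cdot)^{-1}$, which legitimizes the change of variables $t=\widetilde{g}(x,s)$. The algebra then delivers \eqref{L1} from the upper bound $s g(x,s)/G(x,s)\le g^+$, and \eqref{D3} from the two-sided bound combined with the identity
\[
\frac{s\,\widetilde{g}(x,s)}{\widetilde{G}(x,s)} = 1 + \frac{G(x,t)}{\widetilde{G}(x,s)},
\qquad
\frac{\widetilde{G}(x,s)}{G(x,t)} = \frac{t\,g(x,t)}{G(x,t)} - 1,
\]
with the division justified because $g^->1$ by \eqref{D22}. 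I see no gap.
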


\begin{remark}\label{compl}
The condition \eqref{D22} implies that $G(x,t)$ and its complementary function $\widetilde{G}(x,t)$ satisfy the $\Delta_2$-condition.
\end{remark}
 Now, we  define the Musielak-Orlicz space as follows:
$$L^{G(x,t)}(\Omega):=\left\lbrace u:\Omega\longrightarrow \mathbb{R}\ \text{measurable :}\ \rho_{G}(\lambda u)<+\infty,\ \ \text{for some}\ \lambda>0\right\rbrace,$$
where
\begin{equation}\label{Mo}
  \rho_{G}(u):= \int_{\Omega}G(x, u){\rm d}x.
\end{equation}
The space $L^{G(x,t)}(\Omega)$ is endowed with the Luxemburg norm
\begin{equation}\label{No}
  \Vert u\Vert_{L^{G(x,t)}(\Omega)}:=\inf\left\lbrace \lambda:\ \rho_{G}\left(x,\frac{u}{\lambda}\right)\leq 1\right\rbrace.
\end{equation}
\begin{proposition}\label{HM}
    Let $G$ be a generalized \textnormal{N}-function satisfies the $\Delta_2$-condition, then
    $$L^{G(x,t)}(\Omega) =\left\lbrace  u: \Omega \longrightarrow \mathbb{R}\ \text{measurable :}\ \rho_{G}( u)<+\infty\right\rbrace.$$
\end{proposition}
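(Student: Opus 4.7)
The plan is to prove the two set inclusions separately. The inclusion $\supseteq$ is immediate: if $u$ is measurable with $\rho_G(u) < \infty$, then choosing $\lambda = 1$ in the definition of $L^{G(x,t)}(\Omega)$ witnesses that $u \in L^{G(x,t)}(\Omega)$, since $\rho_G(\lambda u) = \rho_G(u) < \infty$.

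For the nontrivial inclusion $\subseteq$, I would take $u \in L^{G(x,t)}(\Omega)$ and select $\lambda_0 > 0$ with $\rho_G(\lambda_0 u) < \infty$. Two cases arise. If $\lambda_0 \geq 1$, then since $t \mapsto G(x,t)$ is increasing on $[0,\infty)$, we have $G(x, |u(x)|) \leq G(x, \lambda_0|u(x)|)$ pointwise, hence $\rho_G(u) \leq \rho_G(\lambda_0 u) < \infty$ by monotone integration. The substantive case is $0 < \lambda_0 < 1$, where the $\Delta_2$-condition is essential. I would pick the smallest integer $k \geq 1$ with $2^{k}\lambda_0 \geq 1$ (this $k$ depends only on $\lambda_0$) and iterate the $\Delta_2$-inequality $k$ times to obtain, for a.a.\ $x \in \Omega$,
\[
G\left(x, 2^{k} \lambda_0 |u(x)|\right) \leq C_0^{k}\, G\left(x, \lambda_0 |u(x)|\right) + \left(\sum_{j=0}^{k-1} C_0^{j}\right) \varphi(x).
\]
Combining this with the monotonicity bound $G(x, |u(x)|) \leq G(x, 2^{k}\lambda_0 |u(x)|)$ and integrating over $\Omega$, using $\varphi \in L^1(\Omega)$, yields
\[
\rho_G(u) \leq C_0^{k}\, \rho_G(\lambda_0 u) + C_k\, \|\varphi\|_{L^1(\Omega)} < \infty,
\]
for a constant $C_k$ depending only on $C_0$ and $k$.

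The only delicate point is the bookkeeping in the iterated $\Delta_2$-estimate: one must verify that the number of iterations $k$ depends only on $\lambda_0$ (and not on $u$ or $x$), and that the additive $\varphi$-terms accumulate in a controlled way so that integrability is preserved. Both are immediate once one observes that $k$ is a fixed finite integer determined by $\lambda_0$ and that $\varphi \in L^1(\Omega)$ by hypothesis; hence this step does not constitute a genuine obstacle. The conclusion is the standard modular-versus-finite-norm characterization for Musielak--Orlicz spaces that becomes available precisely under the $\Delta_2$-condition, and the argument here is essentially the one used in the classical Orlicz setting, transferred to the $(x,t)$-dependent case via the pointwise form of $\Delta_2$.
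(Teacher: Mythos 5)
Your proof is correct and complete. The paper states Proposition~\ref{HM} without proof, treating it as a standard fact about Musielak--Orlicz spaces, so there is no argument in the paper to compare against; your two-inclusion argument, with the iterated $\Delta_2$-estimate $G(x,2^k t)\leq C_0^k G(x,t)+\bigl(\sum_{j=0}^{k-1}C_0^j\bigr)\varphi(x)$ handling the case $0<\lambda_0<1$, is precisely the classical argument transferred to the $(x,t)$-dependent setting, and the bookkeeping you flag (finitely many iterations with $k$ determined by $\lambda_0$ alone, and $\varphi\in L^1(\Omega)$ keeping the error integrable) is handled correctly.
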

\begin{proposition} \label{zoo}
     Let $G$ be a generalized \textnormal{N}-function satisfies \eqref{D22}, then the following assertions hold:
     \begin{itemize}
    \item [(1)] $\min \{s^{g^-}, s^{g^+}\}G(x,t)\leq  G(x,s t)\leq \max \{ s^{g^-}, s^{g^+}\}G(x,t),\text{ for a.e. } x \in \Omega$ $\text{ and all }  s, \ t \geq 0. $
     \item [(2)] $\min \{s^{\widetilde{g^-}}, s^{\widetilde{g^+}}\}\widetilde{G}(x,t)\leq  \widetilde{G}(x,s t)\leq \max \{ s^{\widetilde{g^-}}, s^{\widetilde{g^+}}\}\widetilde{G}(x,t),\text{ for a.e. } x \in \Omega$ $\text{ and all }  s, \ t \geq 0. $
 \item [(3)]  $\min \left\{\|u\|_{L^{G(x,t)}(\Omega)}^{g^-},\|u\|_{L^{G(x,t)}(\Omega)}^{g^+}\right\} \leq \rho_{G}(u) \leq\max \left\{\|u\|_{L^{G(x,t)}(\Omega)}^{g^-},\|u\|_{L^{G(x,t)}(\Omega)}^{g^+}\right\},$ \ for all $  u\in L^{G(x,t)}(\Omega)$.
 %\item [(4)] Let $\left\{ u_n\right\}_{n \in \mathbb{N}}\subseteq L^{G(x,t)}(\Omega) $ and $  u \in L^{G(x,t)}(\Omega) $, then
% $$
% \|u_n-u\|_{L^{G(x,t)}(\Omega) }\longrightarrow0 \ \Longleftrightarrow  \rho_G (u_n-u) \longrightarrow 0,\ \text{ as } n \rightarrow +\infty. $$
\end{itemize}
\end{proposition}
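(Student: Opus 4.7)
The three assertions are the standard power-type estimates for generalized N-functions satisfying a two-sided bound on the Simonenko indices, and the plan is to derive them in order, using (1) as the backbone for both (2) and (3).

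For \textbf{part (1)}, the plan is to fix $x\in\Omega$ and $t>0$, and study the one-variable map $\phi(s):=G(x,st)$ for $s>0$. Since $G(x,\cdot)$ is (absolutely) continuous and increasing with right derivative $g(x,\cdot)$, one has $\phi'(s)=t\,g(x,st)$ a.e., so
\[
\frac{d}{ds}\log \phi(s)=\frac{t\,g(x,st)}{G(x,st)}=\frac{1}{s}\cdot\frac{(st)\,g(x,st)}{G(x,st)}.
\]
By \eqref{D22}, the last factor lies in $[g^-,g^+]$, hence $(\log\phi)'(s)\in[g^-/s,\,g^+/s]$. Integrating between $1$ and $s$ (splitting into the cases $s\geq1$ and $0<s<1$, so that the sign of $\log s$ flips the order of the resulting inequalities) yields
\[
\log s^{g^-}\leq \log\frac{G(x,st)}{G(x,t)}\leq \log s^{g^+}\quad\text{if }s\geq1,
\]
and the reversed bracket if $s<1$. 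Taking $\min$ and $\max$ of $s^{g^-},s^{g^+}$ unifies both cases and gives exactly (1); the borderline $t=0$ is trivial since both sides vanish.

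For \textbf{part (2)}, the argument is identical with $G$ replaced by $\widetilde{G}$ and $g$ replaced by $\widetilde{g}$. The only input needed is the two-sided Simonenko bound $\widetilde{g^-}\leq \widetilde{g}(x,s)s/\widetilde{G}(x,s)\leq \widetilde{g^+}$, which is exactly \eqref{D3} from Lemma~\ref{lm1}. Repeating the logarithmic differentiation of $s\mapsto \widetilde{G}(x,st)$ and integrating on $[1,s]$ or $[s,1]$ gives (2).

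For \textbf{part (3)}, write $\lambda:=\|u\|_{L^{G(x,t)}(\Omega)}$, which we may assume positive (otherwise everything is zero). By Remark~\ref{compl}, $G$ satisfies the $\Delta_2$-condition, so the Luxemburg norm is attained in the sense that $\rho_G(u/\lambda)=1$ for $u\neq0$ (the modular is continuous on $L^{G(x,t)}(\Omega)$ under $\Delta_2$, and is $\leq1$ at $\lambda$ while $>1$ for any smaller value). Writing $|u(x)|=\lambda\cdot|u(x)/\lambda|$ and applying part (1) pointwise with $s=\lambda$ gives
\[
\min\{\lambda^{g^-},\lambda^{g^+}\}\,G\!\left(x,\tfrac{|u(x)|}{\lambda}\right)\leq G(x,|u(x)|)\leq \max\{\lambda^{g^-},\lambda^{g^+}\}\,G\!\left(x,\tfrac{|u(x)|}{\lambda}\right).
\]
Integrating over $\Omega$ and using $\rho_G(u/\lambda)=1$ yields (3).

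The routine part is the logarithmic differentiation; the only mildly delicate step is \textbf{the case split} $s\gtrless1$ in (1) (and (2)), where the sign of $\log s$ swaps the roles of $g^-$ and $g^+$, and \textbf{the use of $\Delta_2$} in (3) to guarantee $\rho_G(u/\lambda)=1$, without which one would only obtain $\rho_G(u/\lambda)\leq1$ and hence only a one-sided comparison. Once these two points are handled carefully, the proof is short.
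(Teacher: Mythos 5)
The paper does not include a proof of Proposition~\ref{zoo} --- it is stated as a standard consequence of \eqref{D22} (with references to the Musielak--Orlicz literature), so there is nothing in the source to compare against. Your argument is the standard one and it is correct: parts (1)--(2) follow from the two-sided Simonenko bound by logarithmic differentiation and integration of $s\mapsto\log G(x,st)$ (respectively $s\mapsto\log\widetilde{G}(x,st)$) over $[1,s]$ or $[s,1]$, with the case split on $\operatorname{sign}(\log s)$ deciding which of $s^{g^-},s^{g^+}$ is the min and which is the max; and part (3) is the modular--norm comparison obtained by taking $s=\lambda=\|u\|_{L^{G(x,t)}(\Omega)}$ in (1), integrating, and using that under $\Delta_2$ (Remark~\ref{compl}) the Luxemburg norm is attained, i.e.\ $\rho_G(u/\lambda)=1$ for $u\neq 0$.

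Two small points worth making explicit but not gaps: (i) the case $s=0$ (not only $t=0$) is trivially true since $g^-,g^+>1$ make both $\min$ and $\max$ vanish and $G(x,0)=0$; (ii) for part (2) you rely on \eqref{D3} from Lemma~\ref{lm1}, which is the same dependence the paper has, so you are matching its level of rigor rather than introducing anything new. The only genuinely delicate step is indeed the one you highlight --- justifying $\rho_G(u/\lambda)=1$ via continuity of the modular under $\Delta_2$, which is needed for the lower bound in (3); with only $\rho_G(u/\lambda)\leq 1$ one would get a one-sided estimate.
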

  %\begin{lm}
   %  Let $\h$ be a generalized \textnormal{N}-function. Then, we have
    % $$ \Vert u\Vert_{\h}\leq \rho_{\h}(u)+1,\ \ \text{for all}\ u\in L^{\h}(\Omega).$$
%\end{lm}
As a consequence of \eqref{Yi}, we have the following result:
\begin{lemma}[H\"older's type
inequality]\label{H1}
  Let $\Omega$ be an open subset of $\mathbb{R}^n$ and $G$ be a generalized \textnormal{N}-function satisfies \eqref{D22}, then
  \begin{equation}\label{Ho}
     \left\vert \int_{\Omega} uv {\rm d}x \right\vert \leq 2 \Vert u\Vert_{L^{G(x,t)}(\Omega)}\Vert v\Vert_{L^{\widetilde{G}(x,t)}(\Omega)},\ \text{for all}\ u\in L^{G(x,t)}(\Omega)\ \text{and all}\ v\in L^{\widetilde{G}(x,t)}(\Omega).
  \end{equation}
\end{lemma}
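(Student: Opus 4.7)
The plan is a standard Young/Luxemburg normalization argument. The only ingredients I need are the Young-type inequality \eqref{Yi} relating $G$ and its complementary function $\widetilde{G}$, together with the definition of the Luxemburg norm and the fact (implicit in Proposition~\ref{zoo}(3) for $G$ satisfying \eqref{D22}) that $\rho_G(u/\|u\|_{L^{G(x,t)}(\Omega)}) \le 1$ for any nonzero $u \in L^{G(x,t)}(\Omega)$.

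First, I dispose of the trivial cases: if $\|u\|_{L^{G(x,t)}(\Omega)} = 0$ or $\|v\|_{L^{\widetilde{G}(x,t)}(\Omega)} = 0$, then $u = 0$ or $v = 0$ almost everywhere, so both sides of \eqref{Ho} vanish. Otherwise, set $\alpha := \|u\|_{L^{G(x,t)}(\Omega)} > 0$ and $\beta := \|v\|_{L^{\widetilde{G}(x,t)}(\Omega)} > 0$.

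Next, apply the pointwise Young-type inequality \eqref{Yi} with $s = |u(x)|/\alpha$ and $t = |v(x)|/\beta$, giving, for almost every $x \in \Omega$,
\[
\frac{|u(x)| \, |v(x)|}{\alpha \beta} \;\le\; G\!\left(x, \tfrac{|u(x)|}{\alpha}\right) + \widetilde{G}\!\left(x, \tfrac{|v(x)|}{\beta}\right).
\]
Integrating over $\Omega$ and using $|\int uv| \le \int |uv|$ together with the key Luxemburg property
\[
\int_{\Omega} G\!\left(x, \tfrac{|u(x)|}{\alpha}\right) \mathrm{d}x \;\le\; 1, \qquad \int_{\Omega} \widetilde{G}\!\left(x, \tfrac{|v(x)|}{\beta}\right) \mathrm{d}x \;\le\; 1,
\]
yields
\[
\frac{1}{\alpha\beta}\left|\int_{\Omega} uv \, \mathrm{d}x\right| \;\le\; 2,
\]
which is exactly \eqref{Ho}.

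The only subtle step is justifying that the modular evaluated at $u/\alpha$ does not exceed $1$. In the classical Orlicz setting this follows from monotone convergence applied along a minimizing sequence $\lambda_n \searrow \alpha$ with $\rho_G(u/\lambda_n) \le 1$; in the Musielak--Orlicz setting with the $\Delta_2$-condition (which holds here by Remark~\ref{compl} under \eqref{D22}, and similarly for $\widetilde{G}$ via Lemma~\ref{lm1}), the modular is continuous on $L^{G(x,t)}(\Omega)$, so the inequality $\rho_G(u/\alpha) \le 1$ is automatic. This is really the only place that requires care, and it is mild; the rest is one line of algebra from Young's inequality. No further structural assumption on $G(x,t)$ beyond what has already been recorded is needed.
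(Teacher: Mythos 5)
Your proof is correct and is exactly the standard Young/Luxemburg normalization argument that the paper invokes implicitly when it states the lemma "as a consequence of \eqref{Yi}" without further detail. One minor remark: the monotone convergence argument you sketch already yields $\rho_G(u/\alpha)\le 1$ in the Musielak--Orlicz setting with no $\Delta_2$ assumption, so the appeal to $\Delta_2$-continuity of the modular, while valid here under \eqref{D22}, is not actually needed for this step.
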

The subsequent proposition deals with some topological properties of
the Musielak-Orlicz space, see \cite[Theorem 7.7 and Theorem
8.5]{Musielak1983}.
\begin{proposition}\label{AB}\ \\
\begin{enumerate}
\item [(1)] Let $G(x,t)$ be a generalized \textnormal{N}-function and $\Omega$ an open subset of
 $\mathbb{R}^n$. Then,
 \begin{enumerate}
     \item[(a)] the space
    % $L^{\h}(\Omega)$ is complete with respect to the norm $\|\cdot\|_{\lh}$, that is,
    $\left(L^{G(x,t)}(\Omega),\|\cdot\|_{G}\right)$ is a Banach space;
     \item[(b)] if $G(x,t)$ satisfies \eqref{D22}, then
$L^{G(x,t)}(\Omega)$ is a separable and reflexive space.
 \end{enumerate}
\item [(2)] Let $G_1(x,t)$ and $G_2(x,t)$  be two generalized \textnormal{N}-functions such that $G_1(x,t) \prec G_2(x,t)$ and $\Omega$ be an open bounded subset of $\mathbb{R}^n$. Then,
$$
L^{G_2(x,t)}(\Omega) \hookrightarrow L^{G_1(x,t)}(\Omega).
$$
\end{enumerate}
\end{proposition}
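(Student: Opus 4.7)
The plan is to verify the three assertions in turn, following the classical Musielak--Orlicz paradigm as in \cite{Musielak1983} but adapted to the present setting where convexity of the modular and the $\Delta_2$-structure imposed by \eqref{D22} do most of the work. The statement is a package of three standard functional-analytic facts (norm completeness, separability/reflexivity, and an embedding from an order relation), so the logical skeleton is: exhibit a norm, run a Cauchy-sequence argument through the modular, then exploit $\Delta_2$ to pass to the dual and pin down separability.

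For part (1)(a), I would first check that $\|\cdot\|_G$ is a norm: positivity and positive homogeneity are immediate from the strict monotonicity and convexity of $t\mapsto G(x,t)$, while the triangle inequality follows from convexity of $\rho_G$ composed with the Luxemburg infimum (the usual two-term splitting $\lambda+\mu$). Completeness is obtained by taking a Cauchy sequence $(u_n)$, extracting a subsequence with $\|u_{n_{k+1}}-u_{n_k}\|_G\le 2^{-k}$, and observing that the telescoping series $\sum_k|u_{n_{k+1}}-u_{n_k}|$ converges $x$-a.e.\ via Fatou's lemma applied to $\rho_G$; the pointwise limit $u$ is then shown to lie in $L^{G(x,t)}(\Omega)$ and to satisfy $u_{n_k}\to u$ in the Luxemburg norm by another application of Fatou to $\rho_G((u-u_{n_k})/\lambda)$ for small $\lambda$.

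For part (1)(b), the decisive input is Remark \ref{compl}: \eqref{D22} forces both $G$ and $\widetilde{G}$ into the $\Delta_2$-class. Thanks to Proposition \ref{HM}, the modular and norm topologies then coincide on bounded sets, so I can approximate any $u\in L^{G(x,t)}(\Omega)$ by simple functions in modular, hence in norm; picking a countable generating family of measurable sets (e.g.\ finite unions of dyadic cubes intersected with $\Omega$) and rational coefficients exhibits a countable dense subset, establishing separability. Reflexivity follows from the duality $(L^{G(x,t)}(\Omega))^{*}\cong L^{\widetilde{G}(x,t)}(\Omega)$, realized through the bilinear pairing of Lemma \ref{H1} and the Young-type inequality \eqref{Yi}; because $\widetilde{G}$ also satisfies $\Delta_2$, the same identification applied to $\widetilde{G}$ makes the canonical map $L^{G(x,t)}(\Omega)\to (L^{G(x,t)}(\Omega))^{**}$ surjective.

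For part (2), the embedding follows directly from the definition of $G_1\prec G_2$, which yields constants $C_1,C_2>0$ and $\varphi\in L^1(\Omega)$ with $G_1(x,t)\le C_1 G_2(x,C_2 t)+\varphi(x)$. Given $u\in L^{G_2(x,t)}(\Omega)$ with $\|u\|_{G_2}=\alpha$, choose $\lambda=C\alpha$ with $C$ large enough depending on $C_1,C_2,\|\varphi\|_{L^1(\Omega)}$ and the scaling law in Proposition \ref{zoo}(1); then
\[
\int_{\Omega} G_1\!\left(x,\tfrac{u}{\lambda}\right){\rm d}x \le C_1 \int_{\Omega} G_2\!\left(x,\tfrac{C_2 u}{\lambda}\right){\rm d}x+\int_{\Omega}\varphi(x)\,{\rm d}x \le 1,
\]
where the last inequality uses boundedness of $\Omega$ and the homogeneity bound on $\rho_{G_2}$ from Proposition \ref{zoo}(3). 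This yields $\|u\|_{G_1}\le \lambda$ and hence the claimed continuous embedding. The main technical obstacle in the whole proposition is the reflexivity part of (1)(b): while the Banach space and embedding statements are essentially modular bookkeeping, identifying the dual with $L^{\widetilde{G}(x,t)}(\Omega)$ in the $x$-dependent setting requires careful handling of the representation of bounded linear functionals, and it is precisely the $\Delta_2$-regularity of both $G$ and $\widetilde{G}$ (guaranteed by \eqref{D22}) that rules out singular parts and permits the classical Orlicz-space proof to go through.
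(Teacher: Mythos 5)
The paper does not actually prove Proposition~\ref{AB}: it is imported verbatim from Musielak's monograph (Theorems~7.7 and~8.5 of \cite{Musielak1983}), so there is no in-paper argument to compare against. Your sketch of parts~(1)(a) and~(1)(b) is the standard route and is sound: Luxemburg norm via the convex modular, completeness by telescoping plus Fatou, then $\Delta_2$ for both $G$ and $\widetilde G$ (via Remark~\ref{compl}) to get modular--norm equivalence, density of simple functions, and the duality $(L^{G})^*\cong L^{\widetilde G}$ applied twice to obtain reflexivity.

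Your argument for part~(2), however, has a genuine gap. After applying the $\prec$ inequality pointwise with $t=u/\lambda$ and integrating, you arrive at
\[
\rho_{G_1}\!\left(\tfrac{u}{\lambda}\right) \le C_1\,\rho_{G_2}\!\left(\tfrac{C_2 u}{\lambda}\right) + \|\varphi\|_{L^1(\Omega)},
\]
and you claim the right-hand side can be driven below $1$ by taking $\lambda=C\alpha$ with $C$ large. But the term $\|\varphi\|_{L^1(\Omega)}$ is a fixed constant, independent of $\lambda$ (and of the boundedness of $\Omega$, which only ensures finiteness, not smallness). If $\|\varphi\|_{L^1(\Omega)}\geq 1$, no choice of $\lambda$ makes your displayed modular bound hold, and the conclusion $\|u\|_{G_1}\le\lambda$ does not follow. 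The standard repair is to exploit convexity of $G_1$ \emph{before} invoking the $\prec$ relation: for $\mu\ge 1$ write $u/(\mu\lambda') = (u/\lambda')/\mu$ and use $G_1(x, s/\mu)\le (1/\mu)\,G_1(x,s)$ to get
\[
\rho_{G_1}\!\left(\tfrac{u}{\mu\lambda'}\right)\le \tfrac{1}{\mu}\,\rho_{G_1}\!\left(\tfrac{u}{\lambda'}\right) \le \tfrac{C_1}{\mu}\,\rho_{G_2}\!\left(\tfrac{C_2 u}{\lambda'}\right) + \tfrac{1}{\mu}\,\|\varphi\|_{L^1(\Omega)},
\]
so that \emph{both} terms on the right are now damped by $1/\mu$. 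Choosing $\lambda'=C_2\|u\|_{G_2}$ (so the modular factor is at most $1$ by Proposition~\ref{zoo}(3)) and $\mu=\max\{1,\,2C_1,\,2\|\varphi\|_{L^1(\Omega)}\}$ yields $\rho_{G_1}(u/(\mu\lambda'))\le 1$, hence $\|u\|_{G_1}\le \mu C_2\,\|u\|_{G_2}$. Alternatively one can bypass the explicit norm estimate by showing well-definedness of the inclusion ($\rho_{G_1}(\nu u)<\infty$ for $\nu=\mu/C_2$ whenever $\rho_{G_2}(\mu u)<\infty$) and then invoking the closed graph theorem.
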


Now, we are ready to define the  Musielak-Orlicz Sobolev space. Let
$G(x,t)$ be a generalized \textnormal{N}-function and $\Omega$ be an open subset of
$\mathbb{R}^n$. The  Musielak-Sobolev space is defined as follows
$$W^{1,G(x,t)}(\Omega):=\left\lbrace u\in L^{G(x,t)}(\Omega):\ |D u| \in L^{G(x,t)}(\Omega)\right\rbrace.$$
The space $W^{1,G(x,t)}(\Omega)$ is endowed with the norm
\begin{equation}\label{NM}
  \Vert u\Vert:=\Vert u\Vert_{L^{G(x,t)}(\Omega)}+\Vert D u\Vert_{L^{G(x,t)}(\Omega)},\ \ \text{for all}\ u\in W^{1,G(x,t)}(\Omega),
\end{equation}
where $\|D u\|_{L^{G(x,t)}(\Omega)} := \| |D u| \|_{L^{G(x,t)}(\Omega)}$.\\ We denote by
$W^{1,G(x,t)}_0(\Omega)$
 the completion of $C^\infty _0(\Omega)$ in $W^{1,G(x,t)}(\Omega)$.
%\begin{theorem}\label{thc}\cite[Theorem 10.2] {Musielak1983}\cite[Proposition 1.7]{Fan2012b}~~~~~~~~~~~~~~~~~~~~~~~~~~~~~~~~~~~~~~~~~~~~~~~~~~~~~~~~~~~~~~

% Let $G(x,t)$ be a generalized \textnormal{N}-function that is locally integrable such that
%\begin{equation}\label{c1c}
%\inf _{x \in \Omega} G(x, 1)>0.
%\end{equation}
%Then, the spaces $W^{1, G(x,t)}(\Omega)$ and $W_0^{1, G(x,t)}(\Omega)$ are
%%reflexive.
%\end{theorem}
%\begin{theorem}\cite[Theorem 10.3]{Musielak1983}~~~~~~~~~~~~~~~~~~~~~~~~~~~~

 %Let $\h_1, \h_2 $ be locally integrable functions such that
 %$$
 %\inf _{x \in \Omega} \h_1(x, 1)>0 \text{ and } \inf _{x \in \Omega} \h_2(x, 1)>0.
 %$$
 %Moreover, if $\h_1 \preceq \h_2$, then the following continuous embedding hold
 %$$
 %W^{1, \h_2}(\Omega) \hookrightarrow W^{1, \h_1}(\Omega).
 %$$
% \end{theorem}
\begin{remark}\label{Ref}
 If $G$  satisfies \eqref{D22}, then the space
$W^{1,G(x,t)}(\Omega)$ is a reflexive and separable Banach space with
respect to the norm $\Vert \cdot\Vert$.
\end{remark}

Now we introduce the Sobolev conjugate from \cite{Ala3}, which refines the one proposed in \cite{Cianchi2024} and yields several important results presented below.

\begin{definition}
The Sobolev conjugate of $G(x,t)$ is defined as the generalized \textnormal{N}-function $G^\ast(x,t)$ given by
			\begin{equation*}
  G ^ \ast (x,t)=G (x, N^{-1} (x,t)) , \text{ for a.e. } x \in \mathbb{R}^n \text{ and all } t \geq 0,
\end{equation*}
where
\begin{equation*}
  N(x,t)= \left( \int_{0}^{t} \left( \frac{\tau}{\Phi (x, \tau)} \right)^{\frac{1}{n-1}} d \tau \right)^{\frac{n-1}{n}}, \text{ for } x \in \mathbb{R}^n \text{ and } t\geq 0.
\end{equation*}
 \end{definition}
 \begin{proposition}[\cite{Ala3}] \label{zoo*}
     Let $G(x,t)$ be a generalized \textnormal{N}-function that satisfies \eqref{D22}--\eqref{GG2} and $G^*(x,t)$ its Sobolev conjugate function, then the following assertions hold:
     \begin{itemize}
    \item [(1)] $\min \left\{s^{g^-_*}, s^{g^+_*}\right\}G^*(x,t)\leq  G^*(x,s t)\leq \max \left\{ s^{g^-_*}, s^{g^+_*}\right\}G^*(x,t),\text{ for a.e. } x \in \Omega$ $\text{ and all }  s, \ t \geq 0$;
 \item [(2)] $\displaystyle{
        1<g^-_*\leq \frac{g^*(x,t)t}{G^*(x,t)}\leq g^+_*,\ \ \text{for all}\ x\in \Omega\ \text{and all}\ t> 0}$;
    \item[(3)] $\min \left\{\|u\|_{L^{G^*(x,t)}(\Omega)}^{g^-_*},\|u\|_{L^{G^*(x,t)}(\Omega)}^{g^+_*}\right\} \leq \rho_{G^*}(u) \leq\max \left\{\|u\|_{L^{G^*(x,t)}(\Omega)}^{g^-_*},\|u\|_{L^{G^*(x,t)}(\Omega)}^{g^+_*}\right\},$ \ for all $  u\in L^{G^*(x,t)}(\Omega)$;
\end{itemize}
where $\displaystyle{g_*^- := \frac{ng^-}{n - g^-}}$, $\displaystyle{g_*^+ := \frac{ng^+}{n - g^+}}$ and $\displaystyle{G^*(x,t)=\int_0^t g^*(x,s){\rm d}s}$.
\end{proposition}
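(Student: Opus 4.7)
The plan is to establish item (2) first, since items (1) and (3) follow directly from Proposition \ref{zoo} applied to $G^*(x,t)$ in place of $G(x,t)$ once the analog of \eqref{D22} is available, namely $g^-_* \leq g^*(x,t)t/G^*(x,t) \leq g^+_*$. The proof of Proposition \ref{zoo} only invokes the pointwise index bound \eqref{D22}, so the whole proposition reduces to establishing the pointwise bounds in (2).

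For (2), I would start from the definition $G^*(x,t) = G(x,N^{-1}(x,t))$ and differentiate via the chain rule. Writing $s = N^{-1}(x,t)$ and using $\partial_t N^{-1}(x,t) = 1/\partial_s N(x,s)$, a direct computation from the definition of $N$ (taking $\Phi = G$, so that $M(x,s) := \int_0^s (\tau/G(x,\tau))^{1/(n-1)}d\tau = t^{n/(n-1)}$) yields, after cancellation,
\begin{equation*}
\frac{g^*(x,t)\,t}{G^*(x,t)} \;=\; \frac{n}{n-1}\cdot \frac{g(x,s)\,s}{G(x,s)}\cdot \frac{M(x,s)\, G(x,s)^{1/(n-1)}}{s^{n/(n-1)}}.
\end{equation*}
Thus the issue is reduced to a sharp two-sided estimate of the quantity $M(x,s)\,G(x,s)^{1/(n-1)}/s^{n/(n-1)}$.

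The main technical step is to prove, using \eqref{D22}, the matched bound
\begin{equation*}
\frac{n-1}{n-g^-}\;\leq\; \frac{M(x,s)\, G(x,s)^{1/(n-1)}}{s^{n/(n-1)}}\;\leq\; \frac{n-1}{n-g^+}.
\end{equation*}
The assumption $g^+ < n$ from \eqref{D22} is crucial here, as it ensures integrability near the origin of the integrand defining $M$. To derive it, I would use the standard monotonicity consequence of \eqref{D22}: the map $\tau \mapsto G(x,\tau)/\tau^{g^-}$ is nondecreasing and $\tau \mapsto G(x,\tau)/\tau^{g^+}$ is nonincreasing, so that $G(x,s)(\tau/s)^{g^+} \leq G(x,\tau) \leq G(x,s)(\tau/s)^{g^-}$ for $0 \leq \tau \leq s$. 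Substituting each inequality into $M(x,s)$ reduces the problem to evaluating power integrals $\int_0^s \tau^{-(g^\pm - 1)/(n-1)}\,d\tau$, whose closed forms yield the displayed bound.

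Combining this with $g(x,s)s/G(x,s) \in [g^-,g^+]$ from \eqref{D22} gives
\begin{equation*}
\frac{n\,g^-}{n-g^-} \;\leq\; \frac{g^*(x,t)\,t}{G^*(x,t)} \;\leq\; \frac{n\,g^+}{n-g^+},
\end{equation*}
which is exactly (2). Items (1) and (3) then follow by invoking Proposition \ref{zoo} with $G^*$ in place of $G$ and $g^\pm_*$ in place of $g^\pm$. The principal obstacle is the sharp two-sided estimate of $M(x,s)$ above; the chain-rule algebra surrounding it is routine, while conditions \eqref{GG1}--\eqref{GG2} enter only to guarantee that $N(x,\cdot)$ and $N^{-1}(x,\cdot)$ are well-defined and that their $x$-dependence is uniform enough for the pointwise estimates to hold for a.e.\ $x$. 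A minor verification that $G^*$ is itself a generalized \textnormal{N}-function (evenness, convexity, and the limits at $0$ and $\infty$) is also needed, but this follows from the structural bound in (2) together with the monotone-increasing nature of $N^{-1}(x,\cdot)$.
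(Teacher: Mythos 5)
The paper does not prove this proposition; it is imported verbatim from the preprint \cite{Ala3}, so there is no in-text argument to compare against. Your proof is nevertheless correct, and the strategy is the natural (and almost certainly the intended) one: the reduction of items (1) and (3) to item (2) via Proposition~\ref{zoo} is valid because that proposition's proof depends only on the pointwise index bound; the chain-rule identity
\[
\frac{g^*(x,t)\,t}{G^*(x,t)} \;=\; \frac{n}{n-1}\cdot\frac{g(x,s)\,s}{G(x,s)}\cdot\frac{M(x,s)\,G(x,s)^{1/(n-1)}}{s^{n/(n-1)}},\qquad s=N^{-1}(x,t),
\]
checks out (using $t=M(x,s)^{(n-1)/n}$ and $\partial_s N=\frac{n-1}{n}M^{-1/n}(s/G)^{1/(n-1)}$, with the exponents of $M$ collapsing to $1$); and the two-sided estimate $\frac{n-1}{n-g^-}\le \frac{M(x,s)G(x,s)^{1/(n-1)}}{s^{n/(n-1)}}\le \frac{n-1}{n-g^+}$ follows exactly as you describe by substituting the monotonicity bounds $G(x,s)(\tau/s)^{g^+}\le G(x,\tau)\le G(x,s)(\tau/s)^{g^-}$ into the integral and evaluating $\int_0^s\tau^{(1-g^\pm)/(n-1)}\,d\tau$, which converges precisely because $g^+<n$. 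Two remarks worth making explicit if you were to write this out in full: the symbol $\Phi$ in the paper's formula for $N$ is an unexplained notation that must be read as $G$ (you identified this correctly), and the chain-rule step tacitly uses absolute continuity of $G(x,\cdot)$, which holds because $g(x,\cdot)$ exists and the indicial bound \eqref{D22} gives enough regularity; conditions \eqref{GG1}--\eqref{GG2} play no role in the pointwise index bound itself, as you observe, but they are what guarantee $N(x,\cdot)$ is a well-defined bijection of $[0,\infty)$ and that $G^*$ inherits the generalized N-function structure.
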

\begin{proposition}[\cite{Cianchi2024}]\label{embb}
Let $\Omega$ be a bounded domain in $\mathbb{R}^n$ and $G(x,t)$ be a generalized \textnormal{N}-function that satisfies \eqref{GG1}--\eqref{GG2}. Let $\vartheta(x,t)$ be a generalized \textnormal{N}-function such that
\begin{equation}
\vartheta(x,t)\prec \prec G^*(x,t),\text{  and   } \int_{\Omega} \vartheta(x,t) \, \mathrm{d}x < \infty,\ \text{for } t>0.
\end{equation}
\begin{enumerate}
\item[(1)] The embedding
\begin{equation}
W^{1,G(x,t)}_0(\Omega) \hookrightarrow L^{\vartheta(x,t)}(\Omega) 
\end{equation}
is compact.

\item[(2)] Assume, in addition, that $\Omega$ is a bounded domain with a Lipschitz boundary $\partial \Omega$. Then, the embedding
\begin{equation}
W^{1,G(x,t)}(\Omega) \hookrightarrow L^{\vartheta(x,t)}(\Omega) 
\end{equation}
is compact.
\end{enumerate}
\end{proposition}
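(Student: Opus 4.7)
The plan is to deduce the compact embedding from three ingredients: (i) the continuous Sobolev-type embedding $W^{1,G(x,t)}_{0}(\Omega)\hookrightarrow L^{G^{*}(x,t)}(\Omega)$ provided by the very construction of the Sobolev conjugate under hypotheses \eqref{GG1}--\eqref{GG2}; (ii) a classical Rellich--Kondrachov reduction to pointwise almost-everywhere convergence; and (iii) Vitali's convergence theorem applied to $\vartheta(x,|u_k-u|)$, with the strict domination $\vartheta\prec\prec G^{*}$ supplying equi-integrability. The integrability hypothesis $\int_{\Omega}\vartheta(x,t)\,\mathrm{d}x<\infty$ will be used to handle the small-value regime in the splitting.

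More concretely, let $\{u_k\}$ be bounded in $W^{1,G(x,t)}_{0}(\Omega)$. By the continuous Sobolev embedding into $L^{G^{*}(x,t)}(\Omega)$ and Proposition~\ref{zoo*}(3), one obtains a uniform bound $\rho_{G^{*}}(u_k)\leq C$. Since \eqref{D22} and \eqref{GG1} imply $G(x,t)\geq c\,t-c'$ for large $t$, the continuous inclusion $W^{1,G(x,t)}_{0}(\Omega)\hookrightarrow W^{1,1}_{0}(\Omega)$ holds, so the classical Rellich theorem yields a subsequence (not relabelled) with $u_k\to u$ almost everywhere in $\Omega$ and in $L^{1}(\Omega)$. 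It remains to promote this to convergence in $L^{\vartheta(x,t)}(\Omega)$.

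The main obstacle, in my view, is exactly this final upgrade, which is where $\vartheta\prec\prec G^{*}$ becomes indispensable. Given $\varepsilon>0$, the definition of $\prec\prec$ yields a threshold $T=T(\varepsilon)$ with $\vartheta(x,s)\leq \varepsilon\,G^{*}\!\left(x,s/\lambda\right)$ uniformly in $x\in\Omega$ whenever $s\geq T$, for a fixed $\lambda>0$. For any measurable $E\subset\Omega$, split
\begin{equation*}
\int_{E}\vartheta(x,|u_k-u|)\,\mathrm{d}x
\leq \int_{E\cap\{|u_k-u|\geq T\}}\vartheta(x,|u_k-u|)\,\mathrm{d}x
+\int_{E}\vartheta(x,T)\,\mathrm{d}x.
\end{equation*}
The first term is bounded by $\varepsilon\,\rho_{G^{*}}\!\left((u_k-u)/\lambda\right)\leq C\varepsilon$ using the uniform $L^{G^{*}}$ bound and convexity; the second tends to $0$ as $|E|\to 0$ by absolute continuity of the integral of the fixed function $x\mapsto \vartheta(x,T)\in L^{1}(\Omega)$. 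This gives equi-integrability of $\vartheta(x,|u_k-u|)$, and Vitali's theorem combined with the almost-everywhere convergence produces $\rho_{\vartheta}(u_k-u)\to 0$. Applying the norm-modular equivalence (Proposition~\ref{zoo}(3) transposed to $\vartheta$, valid since $\vartheta\in\Delta_{2}$ by its domination by $G^{*}$) converts this into $\|u_k-u\|_{L^{\vartheta(x,t)}(\Omega)}\to 0$, proving (1).

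Assertion (2) is then reduced to (1) by extension. Using the Lipschitz regularity of $\partial\Omega$ together with \eqref{GG1}--\eqref{GG2} and the $\Delta_{2}$-property of $G$, a standard reflection-plus-partition-of-unity argument produces a bounded linear extension operator $E:W^{1,G(x,t)}(\Omega)\to W^{1,G(x,t)}_{0}(\Omega')$ for some fixed $\Omega'\Supset\Omega$, to which extending $\vartheta$ trivially and applying (1) on $\Omega'$ yields the required compactness after restriction. I expect the extension construction itself to be technically delicate in the fully $x$-dependent Musielak--Orlicz setting, but it follows the established template in \cite{Harjulehto2019,Chlebicka2021} once \eqref{GG2} is in hand.
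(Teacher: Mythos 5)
The paper does not actually prove this proposition; it is imported from Cianchi--Diening~\cite{Cianchi2023}, and the remark that follows it merely checks that the working hypotheses \eqref{D22}--\eqref{GG2} imply the conditions (A0)--(A2) used there. Your write-up is therefore a stand-alone argument and must be judged as such, not against an in-paper proof.

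Your scheme---continuous embedding into $L^{G^*(x,t)}(\Omega)$, a.e.\ convergence of a subsequence, then Vitali driven by an equi-integrability splitting using $\vartheta\prec\prec G^*$ and $\vartheta(\cdot,t)\in L^1(\Omega)$---is the standard template, and the splitting of $\int_E \vartheta(x,|u_k-u|)\,\mathrm{d}x$ is set up correctly. Note, however, that you are not proving the proposition so much as reducing it to the continuous Sobolev embedding $W^{1,G(x,t)}_0(\Omega)\hookrightarrow L^{G^*(x,t)}(\Omega)$: that embedding is not ``provided by the very construction'' of the conjugate, it is precisely the nontrivial content of Cianchi--Diening's theorem under (A0)--(A1). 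Flag it as an explicit input. The extension-operator reduction in part (2) is standard once (1) is in hand.

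There is one genuine gap in the final step of (1). You assert that ``$\vartheta\in\Delta_2$ by its domination by $G^*$'' and then ``transpose'' Proposition~\ref{zoo}(3) to $\vartheta$. Neither is justified: $\vartheta\prec\prec G^*$ is a one-sided comparison near infinity and carries no information on the doubling of $\vartheta$ (one can build a convex generalized N-function with polynomially bounded growth whose ratio $t\vartheta'(x,t)/\vartheta(x,t)$ is unbounded, hence $\vartheta\notin\Delta_2$), and Proposition~\ref{zoo}(3) moreover rests on the two-sided index bounds of \eqref{D22}, which $\vartheta$ has not been assumed to satisfy. Without this, $\rho_\vartheta(u_k-u)\to 0$ does not yield $\|u_k-u\|_{L^{\vartheta(x,t)}(\Omega)}\to 0$ and the compactness conclusion does not follow. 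The repair is clean: since the definition of $\prec\prec$ is quantified over every dilation $k>0$, run your equi-integrability and Vitali argument on $\vartheta(x,\kappa|u_k-u|)$ for each fixed $\kappa>0$, obtaining $\rho_\vartheta(\kappa(u_k-u))\to 0$ for all $\kappa>0$, which is equivalent to norm convergence in $L^{\vartheta(x,t)}(\Omega)$ with no $\Delta_2$ assumption whatsoever on $\vartheta$. A minor related remark: you also appeal to \eqref{D22} (for the linear lower bound on $G$ and via Proposition~\ref{zoo*}), which does not appear in the stated hypotheses of the proposition; this is consistent with the remark in the paper but should be stated openly.
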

\begin{remark}
If the function $G(x,t)$ satisfies assumptions \eqref{D22}--\eqref{GG2}, then it also fulfills conditions (A0)--(A1) as stated in L. Diening and A. Cianchi paper~\cite{Cianchi2024}, and it satisfies condition (A2) due to the boundedness of $\Omega \subset \mathbb{R}^n$. Consequently, the embedding result given in Proposition~\ref{embb} remains valid under assumptions \eqref{D22}--\eqref{GG2}.
\end{remark}
%----------------------------
\section{A New Class of De Giorgi-Type Functions \texorpdfstring{$\mathcal{B}_{G(x,t)}$}{$B_{G(x,t)}$}}

In this section, we introduce and analyze a new De Giorgi-type class of functions denoted by \( \mathcal{B}_{G(x,t)} \). Our main results are encapsulated in Theorems~\ref{Theorem 2.1} and~\ref{Theorem 2.2}, which establish key regularity properties for functions belonging to this class.

The proof strategies we adopt are inspired by the foundational works of X.~Fan~\cite{Fan1999} and G.~Lieberman~\cite{Li1991}. However, our approach involves significant technical refinements that are essential to overcome the challenges arising from the nonstandard growth behavior and spatial heterogeneity inherent in the Musielak–Orlicz framework. In particular, the interaction between Orlicz-type growth and variable exponent dependence introduces substantial analytical difficulties, which require new ideas and careful adaptations of classical methods.

These extensions allow us to unify and generalize existing results under a set of natural and flexible assumptions, thereby providing a broader and more robust regularity theory applicable to a wide range of problems.

We begin by collecting several auxiliary lemmas that will play a crucial role in the subsequent analysis.

\begin{lemma}[see Lemma 4.7 of Chap. II of \cite{OL}]\label{Lemma 2.1}
Let $\{Y_h\}_{h\geq 0}$ be a sequence of non-negative numbers satisfying the recursion relation
\[
Y_{h+1} \leq c b^h Y_h^{1+\varepsilon}, \quad h = 0, 1, 2, \dots,
\]
where $c > 0$, $b > 1$, and $\varepsilon > 0$. If
\[
Y_0 \leq \theta := c^{-\frac{1}{\varepsilon}} b^{-\frac{1}{\varepsilon^2}},
\]
then
\[
Y_h \leq \theta b^{-\frac{h}{\varepsilon}}, \quad \forall h =0,1,2\ldots,
\]
and hence $Y_h \to 0$ as $h \to \infty$.
\end{lemma}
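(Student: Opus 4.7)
The plan is to prove the lemma by a direct induction on $h$, with the inductive step amounting to a careful bookkeeping of exponents that turns out to close exactly because of the particular form of the threshold $\theta = c^{-1/\varepsilon}b^{-1/\varepsilon^2}$. The base case $h=0$ is immediate, since the hypothesis $Y_0\le\theta$ is precisely the statement $Y_0\le\theta b^{-0/\varepsilon}$.

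For the inductive step, I would assume $Y_h\le\theta b^{-h/\varepsilon}$ and plug this into the recursion. Using the monotonicity $x\mapsto x^{1+\varepsilon}$ and the assumed recursion, I would write
\[
Y_{h+1}\;\le\;c\,b^{h}\,Y_h^{1+\varepsilon}\;\le\;c\,b^{h}\,\theta^{1+\varepsilon}\,b^{-h(1+\varepsilon)/\varepsilon}\;=\;c\,\theta^{1+\varepsilon}\,b^{-h/\varepsilon},
\]
where the exponent of $b$ simplifies via $h-h(1+\varepsilon)/\varepsilon=-h/\varepsilon$. It then remains to check that $c\,\theta^{1+\varepsilon}\,b^{-h/\varepsilon}\le\theta\,b^{-(h+1)/\varepsilon}$, which is equivalent to $c\,\theta^{\varepsilon}\le b^{-1/\varepsilon}$. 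Substituting $\theta=c^{-1/\varepsilon}b^{-1/\varepsilon^2}$ gives $\theta^{\varepsilon}=c^{-1}b^{-1/\varepsilon}$, so $c\,\theta^{\varepsilon}=b^{-1/\varepsilon}$ exactly, and the inequality holds with equality. This closes the induction.

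Finally, since $b>1$ and $1/\varepsilon>0$, the factor $b^{-h/\varepsilon}$ tends to $0$ as $h\to\infty$, which yields $Y_h\to 0$ and completes the proof. Because the result is the standard Ladyzhenskaya--Ural'tseva fast geometric convergence lemma, there is no genuine obstacle here; the only care required is in aligning the exponents of $b$ so that the calibration of $\theta$ makes the inductive bound close tightly. In particular, no property of the Musielak--Orlicz structure enters: the lemma is purely numerical and will later be applied with $Y_h$ chosen as a scaled measure of the level sets $|A_{k_h,\rho_h}|$ in the iteration scheme that drives the proofs of Theorems~\ref{Theorem 2.1} and~\ref{Theorem 2.2}.
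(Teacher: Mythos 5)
Your proof is correct and is the standard induction argument for this fast geometric convergence lemma; the paper does not supply its own proof but simply cites Lemma~4.7 of Chapter~II of Ladyzhenskaya--Ural'tseva, and your argument matches the one found there. The calibration $c\,\theta^{\varepsilon}=b^{-1/\varepsilon}$ is exactly the identity that makes the inductive step close with equality, and you have verified it cleanly.
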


\begin{lemma}[see Lemma 3.5 of Chap. II of \cite{OL}]\label{Lemma 2.2}
For any \(u \in W^{1,1}(B_\rho)\) and arbitrary numbers \(k\) and \(l\) with \(l > k\), the following inequality hol{\rm d}s:
\begin{equation}\label{2.2}
(l - k)|A_{l,\rho}|^{1-\frac{1}{n}} \leq \beta \frac{\rho^n}{|B_\rho \setminus A_{k,\rho}|} \int_{A_{k,\rho}\setminus A_{l,\rho}} |Du| \, {\rm d}x,
\end{equation}
where \(A_{k,\rho} = \{x \in B_\rho : u(x) > k\}\), \(\beta = \beta(n) > 1\) is a constant depending only on \(n\).
\end{lemma}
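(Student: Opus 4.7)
The plan is to reduce the inequality to a weighted Sobolev--Poincar\'e estimate applied to a suitable truncation of $u$. Since the claim involves only the $L^1$-integrability of $|Du|$, we work entirely in the plain Sobolev space $W^{1,1}(B_\rho)$ and do not need any of the Musielak--Orlicz machinery; the statement is, in fact, the classical isoperimetric-type inequality of De~Giorgi--Ladyzhenskaya--Ural'tseva, so one either cites Lemma~3.5 of Chapter~II of \cite{OL} directly, or reproduces the short argument outlined below.

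First I would introduce the truncation
\[
v(x) := \min\{u(x),l\} - \min\{u(x),k\},
\]
which belongs to $W^{1,1}(B_\rho)$ and satisfies $v \equiv l-k$ on $A_{l,\rho}$, $v \equiv 0$ on $B_\rho\setminus A_{k,\rho}$, $v = u-k \in (0,l-k)$ on $A_{k,\rho}\setminus A_{l,\rho}$, and $Dv = Du\,\chi_{A_{k,\rho}\setminus A_{l,\rho}}$ almost everywhere. In particular, from $v \geq (l-k)\chi_{A_{l,\rho}}$ one obtains the trivial lower bound
\[
(l-k)\,|A_{l,\rho}|^{\frac{n-1}{n}} \;\leq\; \|v\|_{L^{n/(n-1)}(B_\rho)}.
\]

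The next step is to apply a weighted Sobolev--Poincar\'e inequality for functions vanishing on a set of positive measure. Since $v \equiv 0$ on $B_\rho\setminus A_{k,\rho}$, the standard route is to write $v = (v-\overline{v}) + \overline{v}$ (with $\overline{v}$ the mean on $B_\rho$), apply the classical Sobolev--Poincar\'e inequality to the zero-mean part $v-\overline{v}$, and control the constant $\overline{v}$ via the identity
\[
\overline{v}\,|B_\rho\setminus A_{k,\rho}| \;=\; \int_{B_\rho\setminus A_{k,\rho}}(\overline{v}-v)\,\mathrm{d}x,
\]
which together with the $L^1$-Poincar\'e inequality produces
\[
\|v\|_{L^{n/(n-1)}(B_\rho)} \;\leq\; C(n)\,\frac{\rho^n}{|B_\rho\setminus A_{k,\rho}|}\int_{B_\rho}|Dv|\,\mathrm{d}x.
\]
Substituting $|Dv| = |Du|\,\chi_{A_{k,\rho}\setminus A_{l,\rho}}$ and chaining with the previous lower bound yields the conclusion with $\beta = C(n)$.

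The only genuinely substantive ingredient is the weighted Sobolev--Poincar\'e estimate, where one must take care that the constant carries the factor $\rho^n/|B_\rho\setminus A_{k,\rho}|$ rather than the naive Poincar\'e factor $\rho$. This asymmetry is essential: the bound must blow up as $|B_\rho\setminus A_{k,\rho}|\to 0$, otherwise the mean value of $v$ could not be controlled. Once this inequality is in hand, either derived by the mean-splitting argument above or imported from \cite{OL}, the rest is simple bookkeeping on the level sets $A_{k,\rho}$ and $A_{l,\rho}$.
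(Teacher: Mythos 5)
Your sketch is correct, and it is worth noting that the paper itself does not prove this lemma at all: it is cited verbatim from Lemma~3.5 of Chapter~II of Ladyzhenskaya--Ural'tseva, exactly as your option~(a) suggests. Your option~(b), however, genuinely differs from the argument in that reference. The classical proof in \cite{OL} is a Riesz-potential argument: one starts from the pointwise bound $(l-k)\,|B_\rho\setminus A_{k,\rho}| \le C\rho^n \int_{A_{k,\rho}\setminus A_{l,\rho}}\frac{|Du(z)|}{|z-x|^{n-1}}\,\mathrm{d}z$ for $x\in A_{l,\rho}$ (obtained by integrating $Dv$ along rays), integrates over $A_{l,\rho}$, applies Fubini, and then estimates $\int_{A_{l,\rho}}|z-x|^{1-n}\,\mathrm{d}x \le C_n|A_{l,\rho}|^{1/n}$ by a symmetric rearrangement of $A_{l,\rho}$. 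Your route replaces all of this with a single application of the scale-invariant Sobolev--Poincar\'e embedding $\|v-\bar v\|_{L^{n/(n-1)}(B_\rho)} \le C(n)\|Dv\|_{L^1(B_\rho)}$ plus a control of the mean $\bar v$ via $|\bar v|\le \frac{C\rho}{|B_\rho\setminus A_{k,\rho}|}\|Dv\|_{L^1}$, absorbing the resulting $\left(1+\frac{\rho^n}{|B_\rho\setminus A_{k,\rho}|}\right)$ into a single weighted factor because $\frac{\rho^n}{|B_\rho\setminus A_{k,\rho}|}\ge \omega_n^{-1}$. Both derivations give $\beta=\beta(n)$ and the same conclusion; the potential-estimate route is more self-contained (it does not presuppose Sobolev embedding and actually yields the inequality by which Sobolev embedding is usually proved), while yours is shorter and cleaner if one is willing to import the embedding as a black box. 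Either is acceptable here, and the truncation $v=\min\{u,l\}-\min\{u,k\}$, its three-case description, and the identity $Dv=Du\,\chi_{A_{k,\rho}\setminus A_{l,\rho}}$ are correctly set up in your proposal.
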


\begin{lemma}[see Lemma 2.3 of \cite{Fan1999} or Lemma 4.8 of Chap. II of \cite{OL}]\label{Lemma 2.3}
Suppose a function \(u(x)\) is measurable and bounded in some ball \(B_{R_0}\). Consider balls \(B_R\) and \(B_{bR}\) which have a common center with \(B_{R_0}\), where \(b > 1\) is a fixed constant, and suppose that for any \(0 < R \leq b^{-1} R_0\), at least one of the following two inequalities is valid:
\[
\operatorname{osc}_{B_R}u \leq c_1 R^\varepsilon, \quad \text{or}\quad \operatorname{osc}_{B_R}u \leq \theta \operatorname{osc}_{ B_{bR}}u,
\]
where \(c_1, \varepsilon \leq 1\) and \(\theta < 1\) are positive constants. Then \(u \in C^{0,\alpha}(B_{R_0})\), where \(\alpha = \min\{\varepsilon, -\log_b \theta\}\).
\end{lemma}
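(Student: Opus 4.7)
The plan is to prove an oscillation decay estimate $\omega(R) := \operatorname{osc}_{B_R} u \leq C R^\alpha$ at the common center, and then observe that applying the same argument at every admissible center with uniform constants yields Hölder continuity on $B_{R_0}$. First, the two disjunctive alternatives in the hypothesis can be merged into the single additive recursion
\[
\omega(R) \leq \theta\, \omega(bR) + c_1 R^\varepsilon, \qquad 0 < R \leq b^{-1} R_0,
\]
since whichever alternative is in force, the right-hand side is an upper bound. I would iterate this inequality along the geometric sequence $R_j := b^j R$ for $j = 0, 1, \dots, N$, where $N$ is the largest integer with $b^N R \leq R_0$, to obtain by induction
\[
\omega(R) \leq \theta^N \omega(R_N) + c_1 R^\varepsilon \sum_{j=0}^{N-1} (\theta b^\varepsilon)^j.
\]

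The claimed Hölder exponent would then emerge from a case split on $\sigma := \theta b^\varepsilon$. If $\sigma < 1$ (equivalently $\varepsilon < -\log_b \theta$), the geometric series is uniformly bounded and the remainder $\theta^N \omega(R_N) \leq C (R/R_0)^{-\log_b \theta}$ decays faster than $R^\varepsilon$, giving $\omega(R) \leq C R^\varepsilon$ with $\alpha = \varepsilon$. If $\sigma > 1$ (equivalently $-\log_b \theta < \varepsilon$), the partial sum is comparable to its last term $\sigma^{N-1}$, which cancels the prefactor $R^\varepsilon$ and leaves $\omega(R) \leq C \theta^N \leq C (R/R_0)^{-\log_b \theta}$, so $\alpha = -\log_b \theta$. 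The borderline case $\sigma = 1$ is the main technical nuisance, since then the sum equals $N \sim \log_b(R_0/R)$, producing a logarithmic loss; the standard remedy is to replace $\varepsilon$ by $\varepsilon - \eta$ at the outset so that $\sigma < 1$, sacrifice an arbitrarily small amount of the Hölder exponent, and absorb the logarithm in the constant.

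Finally, to pass from concentric oscillation decay to Hölder continuity throughout $B_{R_0}$, I would first extend $\omega(R) \leq C R^\alpha$ to arbitrary $R \leq b^{-1} R_0$ by the monotonicity of $\omega$ in $R$. Then, applying the same argument at every center $x_0$ in an interior subdomain (with the admissible radius reduced accordingly) and using that for any two close enough points $x, y$ the ball $B_{|x-y|}(x)$ contains both, the standard two-point bound
\[
|u(x) - u(y)| \leq \operatorname{osc}_{B_{|x-y|}(x)} u \leq C\, |x-y|^\alpha
\]
yields the desired Hölder regularity.

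The main obstacle is the borderline case $\theta b^\varepsilon = 1$, where the geometric iteration inherently produces a logarithmic factor; the infinitesimal reduction of $\varepsilon$ circumvents this at the cost of tracking the constants more carefully. Apart from that, the proof is a routine Campanato-type oscillation iteration in the spirit of Chapter II of Ladyzhenskaya-Uraltseva, and the hypotheses $0 < \theta < 1$ and $b > 1$ enter only to ensure $-\log_b \theta > 0$ so that $\alpha$ is genuinely positive.
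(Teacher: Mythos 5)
The paper itself offers no proof of this lemma; it is quoted verbatim from Fan--Zhao (Lemma 2.3 of \cite{Fan1999}) and Ladyzhenskaya--Ural\cprime tseva (Chapter~II, Lemma~4.8 of \cite{OL}), so there is no ``paper's own proof'' to compare against. What can be assessed is whether your blind argument actually establishes the statement as written, and here there is a genuine, if subtle, gap.

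Your first move --- merging the disjunction into the single additive recursion $\omega(R)\le\theta\,\omega(bR)+c_1R^\varepsilon$ --- discards information. At the borderline $\theta b^\varepsilon=1$ (that is, $\varepsilon=-\log_b\theta$, which is squarely inside the asserted $\alpha=\min\{\varepsilon,-\log_b\theta\}$) the iterated additive bound produces $\omega(R)\lesssim\theta^N\omega(R_N)+c_1 N R^\varepsilon\sim R^\varepsilon\log(1/R)$, and this logarithm is not an artifact of loose constants: the function $\omega(R)=R^\varepsilon\log(1/R)$ satisfies the merged inequality for any $c_1\ge\log b$, yet is not $O(R^\varepsilon)$. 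Your proposed remedy (shrink $\varepsilon$ to $\varepsilon-\eta$) therefore proves a strictly weaker statement, namely $u\in C^{0,\alpha-\eta}$ for every $\eta>0$, not $u\in C^{0,\alpha}$. The cited lemma is sharp, and its proof must retain the ``either/or'' structure rather than add the two alternatives.

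The correct iteration, which is what LU and Fan--Zhao do, is as follows. With $R_j=b^jR$ and $N$ the largest integer with $R_N\le R_0$, let $j_*$ be the smallest index in $\{0,\dots,N-1\}$ at which the \emph{first} alternative $\omega(R_{j_*})\le c_1R_{j_*}^\varepsilon$ holds; if no such index exists set $j_*=N$. For every $j<j_*$ the \emph{second} alternative holds, so pure multiplication gives $\omega(R)\le\theta^{j_*}\omega(R_{j_*})$. If $j_*=N$, bound $\omega(R_N)\le 2M$ and use $\theta^N\lesssim(R/R_0)^{-\log_b\theta}\le C R^\alpha$. If $j_*<N$, insert the first alternative to get
\[
\omega(R)\;\le\;c_1\,\theta^{j_*}\bigl(b^{j_*}R\bigr)^\varepsilon\;\le\;c_1\,R_0^{\varepsilon-\alpha}\,\theta^{j_*}\bigl(b^{j_*}R\bigr)^\alpha
\;=\;c_1\,R_0^{\varepsilon-\alpha}\,(\theta b^\alpha)^{j_*}\,R^\alpha,
\]
using $\alpha\le\varepsilon$ and $b^{j_*}R\le R_0$. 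Since $\theta b^\alpha\le 1$ whenever $\alpha\le-\log_b\theta$ (with equality in the borderline case), the factor $(\theta b^\alpha)^{j_*}$ is $\le 1$ and $\omega(R)\le C R^\alpha$ with no logarithmic loss. The key difference from your argument is that the sum $\sum_{j<j_*}(\theta b^\varepsilon)^j$ never appears: the dichotomy lets you stop the iteration at $j_*$ and pay only one power of $c_1R_{j_*}^\varepsilon$, not one at every scale. The passage from oscillation decay at each center to Hölder continuity on $B_{R_0}$ is then exactly as you describe.

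For what it is worth, in the application made of this lemma in the proof of Theorem~\ref{Theorem 2.1} the parameters satisfy $\theta b^\varepsilon=4\bigl(1-\tau^{-1}2^{-s}\bigr)\neq 1$, so the borderline case never actually arises there; but the lemma as stated requires the sharper iteration above.
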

The following lemmas are fundamental to our subsequent analysis.

\begin{lemma} \label{lem22}
Under assumptions \eqref{D22}--\eqref{GG1} and \eqref{GG3}, for any $x_0 \in \Omega$, $t > 1$, and $R \in (0,1)$, define
\begin{align*}
f^-(R) &:= \inf_{x \in B_R(x_0)} r(x, t), \\
f^+(R) &:= \sup_{x \in B_R(x_0)} r(x, t).
\end{align*}
Then the following properties hold.
\begin{enumerate}
\item[$(1)$] For $t>1$, $$\operatorname{osc}_{B_R}r(x,t)=f^+(R)-f^-(R)=\eta(R).$$
    \item[$(2)$] For all $x \in B_R(x_0)$ and $t > 1$,
          \[
          g^- \leq f^-(R) \leq r(x, t) \leq f^+(R) \leq g^+.
          \]
    \item[$(3)$] For all $x \in B_R(x_0)$ and $t > 1$,
          \[
          F^{-1} t^{f^-(R)} \leq G(x, t) \leq F t^{f^+(R)}.
          \]
    \item[$(4)$] For all sufficiently small $R > 0$,
          \[
          R^{f^-(R) - f^+(R)} \leq L, \quad \text{where } L := \exp(L_0).
          \]
    \item[$(5)$] The oscillation decays to zero:
          \[
          \lim_{R \to 0^+} \left( f^+(R) - f^-(R) \right) = 0.
          \]
\end{enumerate}
\end{lemma}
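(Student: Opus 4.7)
The plan is to establish the five items in order, relying almost entirely on the definition $r(x,t)=tg(x,t)/G(x,t)$, the pointwise two-sided bound $g^- \leq r \leq g^+$ from \eqref{D22}, the size control $F^{-1}\leq G(x,1)\leq F$ from \eqref{GG1}, and the modulus-of-oscillation estimate $\eta(R)\leq L_0/|\ln(2R)|$ from \eqref{GG3}.

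First I would dispatch (1), (2), and (5). Statement (1) is essentially the definition of oscillation together with the definition of $\eta(R)$: one has $\operatorname{osc}_{B_R}r(\cdot,t)=f^+(R)-f^-(R)$, and \eqref{GG3} records that this quantity equals $\eta(R)$. Statement (2) follows by passing to $\inf$ and $\sup$ over $x\in B_R(x_0)$ in the pointwise bound $g^-\leq r(x,t)\leq g^+$ furnished by \eqref{D22}. Statement (5) is then an immediate corollary of (1) and \eqref{GG3}: $f^+(R)-f^-(R)=\eta(R)\leq L_0/|\ln(2R)|\to 0$ as $R\to 0^+$.

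Next I would prove (3). The natural starting point is the one-variable identity $\frac{d}{ds}\ln G(x,s)=g(x,s)/G(x,s)=r(x,s)/s$ for $s>0$, which follows from the absolute continuity of $G(x,\cdot)$. Integrating from $s=1$ to $s=t$ and exponentiating gives
\[
G(x,t) = G(x,1)\exp\!\left(\int_{1}^{t} r(x,s)\,\frac{ds}{s}\right).
\]
Inserting the bounds $f^-(R)\leq r(x,s)\leq f^+(R)$ on the integrand, together with \eqref{GG1}, yields $F^{-1}t^{f^-(R)}\leq G(x,t)\leq F t^{f^+(R)}$. The delicate point is justifying the bound $r(x,s)\in[f^-(R),f^+(R)]$ uniformly for $s\in[1,t]$, since $f^\pm(R)$ are defined using the particular endpoint value $t$; here the uniformity-in-$s$ aspect of \eqref{GG3} is what legitimates the estimate, and this is precisely why (3) needs \eqref{GG3} and not only \eqref{D22} and \eqref{GG1}.

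Finally, for (4) I would just manipulate logarithms: since $R<1$,
\[
R^{f^-(R)-f^+(R)} = \exp\!\big((f^+(R)-f^-(R))\,|\ln R|\big).
\]
By part (1) and \eqref{GG3} this is bounded by $\exp\!\big(L_0\cdot|\ln R|/|\ln(2R)|\big)$, and the ratio $|\ln R|/|\ln(2R)| = 1+(\ln 2)/|\ln(2R)|$ tends to $1$ as $R\to 0^+$, so for all sufficiently small $R>0$ the quantity is bounded by $\exp(L_0)=L$. The main obstacle I expect is the fine point in (3): controlling $\int_1^t r(x,s)\,ds/s$ by $f^{\pm}(R)\ln t$ when $f^{\pm}(R)$ is taken at the single value $t$ rather than uniformly over $s\in[1,t]$. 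The resolution is to use the uniform-in-$t$ modulus supplied by \eqref{GG3}, combined with the pointwise bounds from \eqref{D22}, to transfer the estimate across the integration range; the remaining four parts are then bookkeeping around this one substantive step.
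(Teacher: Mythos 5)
Your approach matches the paper's on every item: identifying $\operatorname{osc}_{B_R} r(\cdot,t) = f^+(R)-f^-(R) = \eta(R)$ for (1), passing to inf/sup in \eqref{D22} for (2), integrating $\tfrac{d}{ds}\ln G(x,s) = r(x,s)/s$ from $1$ to $t$ and invoking \eqref{GG1} for (3), and manipulating logarithms via \eqref{GG3} for (4) and (5). On (3), you rightly flag that $f^\pm(R)$ are defined with the endpoint value $t$ while the integrand involves $r(x,s)$ for $s\in(1,t)$; the paper's proof makes the same leap, simply asserting ``from part (2), $f^-(R)\le r(x,s)\le f^+(R)$ for all $s>1$,'' so you have identified but not actually closed the same implicit step. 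Your appeal to \eqref{GG3} as the mechanism that transfers the bound across $s$ is gestured at rather than carried out, since \eqref{GG3} controls oscillation in $x$, not variation in $s$; in practice both you and the paper proceed as though $f^\pm$ were taken uniformly over the integration range.

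On (4) there is a genuine sign slip in your limit argument. Since $R<\tfrac12$ gives $|\ln(2R)| = |\ln R| - \ln 2 < |\ln R|$, the ratio $|\ln R|/|\ln(2R)| = 1 + \ln 2/|\ln(2R)|$ tends to $1$ \emph{from above}, not from below, so $\exp\bigl(L_0\,|\ln R|/|\ln(2R)|\bigr) > \exp(L_0) = L$ for every such $R$ and your conclusion that the quantity ``is bounded by $L$ for sufficiently small $R$'' does not follow. The paper's own proof fails at the corresponding point (it asserts $|\ln(2R)| > |\ln R|$ for $R<\tfrac12$, which is false). Both arguments in fact only give $R^{f^-(R)-f^+(R)} \le L^{1+\ln 2/|\ln(2R)|}$, and to have a clean statement one should enlarge the constant (e.g.\ to $\exp(2L_0)$ once $R\le \tfrac14$). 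Since all later uses of the lemma only require some fixed uniform bound, this is a fixable constant, not a fatal error, but as written neither your proof nor the paper's literally establishes the inequality $\le L$.
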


\begin{proof}
\textbf{Proof of (1):} Let $t>1$. We first show that $$\eta(R) \leq f^+(R) - f^-(R).$$
For any $x, y \in B_R(x_0)$, we have
\[
r(x,t) \leq f^+(R) \quad \text{and} \quad r(y,t) \geq f^-(R),
\]
so that
\[
r(x,t) - r(y,t) \leq f^+(R) - f^-(R).
\]
Similarly,
\[
r(y,t) - r(x,t) \leq f^+(R) - f^-(R),
\]
and hence
\[
|r(x,t) - r(y,t)| \leq f^+(R) - f^-(R).
\]
Taking the supremum over all $x, y \in B_R(x_0)$, it yields that
\[
\eta(R) = \sup_{x,y \in B_R(x_0)} |r(x,t) - r(y,t)| \leq f^+(R) - f^-(R).
\]

Next, we show that $$f^+(R) - f^-(R) \leq \eta(R).$$ 
Let $\varepsilon > 0$ be arbitrary. By the definitions of supremum and infimum, there exist points $x_\varepsilon, y_\varepsilon \in B_R(x_0)$ such that
\[
r(x_\varepsilon, t) > f^+(R) - \frac{\varepsilon}{2} \quad \text{and} \quad r(y_\varepsilon, t) < f^-(R) + \frac{\varepsilon}{2}.
\]
Then,
\[
r(x_\varepsilon, t) - r(y_\varepsilon, t) > \left(f^+(R) - \frac{\varepsilon}{2}\right) - \left(f^-(R) + \frac{\varepsilon}{2}\right) = f^+(R) - f^-(R) - \varepsilon.
\]
Since $|r(x_\varepsilon, t) - r(y_\varepsilon, t)| \geq r(x_\varepsilon, t) - r(y_\varepsilon, t)$, we have
\[
|r(x_\varepsilon, t) - r(y_\varepsilon, t)| > f^+(R) - f^-(R) - \varepsilon.
\]
But by the definition of $\eta(R)$,
\[
\eta(R) \geq |r(x_\varepsilon, t) - r(y_\varepsilon, t)| > f^+(R) - f^-(R) - \varepsilon.
\]
Since $\varepsilon > 0$ is arbitrary, it follows that
\[
\eta(R) \geq f^+(R) - f^-(R).
\]

Combining the two inequalities, we obtain
\[
\eta(R) = f^+(R) - f^-(R).
\]
By definition, $\operatorname{osc}_{B_R} r(x,t) = f^+(R) - f^-(R)$, so
\[
\operatorname{osc}_{B_R} r(x,t) = f^+(R) - f^-(R) = \eta(R),
\]
as desired.

\textbf{Proof of (2):}
 From the definitions of the infimum and supremum, we immediately obtain  
\[
f^-(R) \leq r(x,t) \leq f^+(R), \qquad \forall\, x \in B_R(x_0).
\]  
Since $B_R(x_0)\subset \Omega$, this establishes the desired chain of inequalities.

\textbf{Proof of (3):}
Fix $x \in B_R(x_0)$ and $t > 1$. Consider the function $h(s) = \ln G(x, s)$. Its derivative is
\[
h'(s) = \frac{g(x, s)}{G(x, s)} = \frac{r(x, s)}{s}.
\]
From part (2), we have $f^-(R) \leq r(x, s) \leq f^+(R)$ for all $s > 1$. This implies the differential inequalities
\[
\frac{f^-(R)}{s} \leq h'(s) \leq \frac{f^+(R)}{s}.
\]
Integrating these inequalities from $s=1$ to $s=t$, it yields ttha
\begin{align*}
f^-(R) \ln t &\leq \ln G(x, t) - \ln G(x, 1) \leq f^+(R) \ln t.
\end{align*}
Exponentiating and using the bounds $F^{-1} \leq G(x, 1) \leq F$ from \eqref{GG1}, gives the desired result
\[
F^{-1} t^{f^-(R)} \leq G(x, t) \leq F t^{f^+(R)}.
\]

\textbf{Proof of (4):}
From \eqref{GG3} and $(1)$, we have
\[
\eta(R) \leq \frac{L_0}{|\ln(2R)|},
\]
hence, for $R < \frac{1}{2}$, since $|\ln(2R)| > |\ln R|$, one has 
\begin{equation}\label{eq1eq1eq1}
\eta(R) < \frac{L_0 }{ |\ln R|}. 
\end{equation}
Consequently,
\begin{align*}
R^{f^-(R) - f^+(R)} &= \exp\left( -\eta(R) \ln R \right) = \exp\left( \eta(R) |\ln R| \right) \\
&\leq \exp\left( \frac{L_0}{|\ln R|} \cdot |\ln R| \right) = \exp(L_0) = L.
\end{align*}

\textbf{Proof of (5):}
From \eqref{GG3}, it follows directly that
\[
0 \leq \lim_{R \to 0^+} \eta(R) \leq \lim_{R \to 0^+} \frac{L_0}{|\ln(2R)|} = 0,
\]
which completes the proof.
\end{proof}
\begin{lemma} \label{remff}
Assume \eqref{D22}--\eqref{GG1} and \eqref{GG3} hold. For any $M \geq 1$, there exists $R_M > 0$ such that for all $R < R_M$,
\[
M^{f^+(R) - f^-(R)} \leq 2.
\]
Specifically, one may take $R_M = \min\left\{ R_0,\, \exp\left( -\frac{L_0 \ln M}{\ln 2} \right) \right\}$, where $R_0$ and $L_0$ are the constants from \eqref{GG3}.
\end{lemma}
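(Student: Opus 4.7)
The strategy is to reduce the claim to a direct bound on $\eta(R)$ and then solve for $R$. By Lemma \ref{lem22}(1), $f^+(R) - f^-(R) = \eta(R)$ for $t > 1$, so the asserted inequality $M^{f^+(R)-f^-(R)} \leq 2$ is equivalent to
\[
\eta(R)\,\ln M \leq \ln 2.
\]
The case $M = 1$ is immediate since the left-hand side is then $1$, so I may assume $M > 1$.

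Next I would invoke the estimate \eqref{eq1eq1eq1} that was already extracted inside the proof of Lemma \ref{lem22}(4), namely
\[
\eta(R) < \frac{L_0}{|\ln R|} \quad \text{for every } R < \min\{R_0,\, 1/2\},
\]
where $R_0$ is the radius from \eqref{GG3} (which I may assume, without loss of generality by shrinking if necessary, satisfies $R_0 < 1/2$). Substituting this bound into the reduction above, it suffices to enforce
\[
\frac{L_0 \ln M}{|\ln R|} \leq \ln 2, \qquad \text{equivalently,} \qquad |\ln R| \geq \frac{L_0 \ln M}{\ln 2}.
\]
Since $R < 1$ implies $|\ln R| = -\ln R$, this rearranges to $R \leq \exp\!\bigl(-L_0 \ln M / \ln 2\bigr)$.

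Finally, I would set
\[
R_M := \min\Bigl\{R_0,\; \exp\!\bigl(-L_0 \ln M / \ln 2\bigr)\Bigr\}.
\]
For every $R < R_M$ both conditions are satisfied: the estimate \eqref{eq1eq1eq1} is applicable, and the exponent inequality $|\ln R| \geq L_0 \ln M / \ln 2$ holds. Chaining these yields $M^{\eta(R)} \leq 2$, which is the desired conclusion in view of the identity $f^+(R) - f^-(R) = \eta(R)$. There is no genuine obstacle here; the argument is essentially a one-line calculation. The only subtlety is the logarithmic bookkeeping together with the degenerate case $M = 1$, both of which are handled by the reductions above.
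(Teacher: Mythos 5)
Your argument is correct and is essentially the paper's own proof: both reduce the claim to $\eta(R)\ln M\leq\ln 2$ via the identity $\eta(R)=f^+(R)-f^-(R)$ and the bound $\eta(R)<L_0/|\ln R|$ from \eqref{eq1eq1eq1}, then solve for $R$. Your extra remarks (the trivial $M=1$ case, and the implicit assumption that $R_0$ is small enough for \eqref{eq1eq1eq1} to apply) are sound housekeeping that the paper leaves tacit, but the route is the same.
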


\begin{proof}
Recall, from Lemma \ref{lem22}, that $\eta(R) = f^+(R) - f^-(R)$. By \eqref{eq1eq1eq1}, we have $\eta(R) \leq \frac{L_0}{|\ln R|}$ for $R < R_0$. Consequently,
\[
M^{\eta(R)} \leq \exp\left( \frac{L_0 \ln M}{|\ln R|} \right).
\]
Choose $R_M = \min\left\{ R_0,\, \exp\left( -\frac{L_0 \ln M}{\ln 2} \right) \right\}$. Then, for all $R < R_M$,
\[
\frac{L_0 \ln M}{|\ln R|} \leq \ln 2,
\]
which implies
\[
M^{\eta(R)} \leq \exp(\ln 2) = 2.
\]
\end{proof}
\begin{remark}
In view of assumption~\eqref{GG3}, together with Lemma~\ref{Lemma 2.3} and Lemma~\ref{lem22}-(1), the regularity of weak solutions to Problem~\eqref{P} is strongly influenced by the behavior of the function $r(x,t)$, in particular by its oscillation and regularity properties. This highlights the central role of assumption~\eqref{GG3}.
\end{remark}
\begin{proof}[\textbf{Proof of Theorem \ref{Theorem 2.1}}]
Let $u \in \mathcal{B}_{G(x,t)}(\Omega, M, \gamma, \gamma_1, \delta)$, where $G(x,t)$ satisfies conditions \eqref{D22}--\eqref{GG1} and \eqref{GG3}. Without loss of generality, assume $M \geq 1$, $\gamma \geq 1$ and $L\geq 1$.

To prove Theorem~\ref{Theorem 2.1}, it suffices to show that for every point $x_0 \in \Omega$, there exists a ball $B_{R_0}(x_0) \subset \Omega$ such that $u \in C^{0,\alpha}(B_{R_0}(x_0))$, where $\alpha$ is the constant stated in the theorem.

Fix $x_0 \in \Omega$ and choose $R_0 \in (0, R_M)$ where $R_M$ is from Lemma \ref{remff}, such that $\overline{B}_{R_0}(x_0) \subset \Omega$ and $R_M < 1$. Then, for an arbitrary $R \in (0, R_0]$, at least one of the functions $w := \pm u$ satisfies the following condition.
\begin{equation}\label{2.8}
\left\vert\left\lbrace x \in B_{\frac{R}{2}} : w(x) > \max_{B_R} w(x) - \tfrac{1}{2} \mathrm{osc}_{B_R} u \right\rbrace\right\vert \leq \tfrac{1}{2} \, \vert B_{\frac{R}{2}}\vert.
\end{equation}
From this point on, we denote by $w$ the function (either $u$ or $-u$) that satisfies inequality \eqref{2.8}. Define
\begin{equation}\label{2.9}
\tau := \max\left\{2, \frac{2}{\delta} \right\}, \quad \psi := \tau^{-1} \, \mathrm{osc}_{B_R} u,
\end{equation}
and
\begin{equation}\label{2.10}
k' := \max_{B_R} w(x) - \psi.
\end{equation}
It follows that
\begin{equation}\label{2.11}
k' \geq \max_{B_R} w(x) - \tfrac{1}{2} \, \mathrm{osc}_{B_R} u,
\end{equation}
and
\begin{equation}\label{2.12}
k' \geq \max_{B_R} w(x) - \delta M.
\end{equation}
\begin{remark}
Note that inequality~\eqref{2.12} guarantees that conditions~\eqref{2.4} and~\eqref{2.5} hold for all \( k \geq k' \) and for all \( \rho \leq R \).
\end{remark}
Under these assumptions and notations, the proof of Theorem~\ref{Theorem 2.1} is completed through the lemmas presented below, specifically Lemmas~\ref{Lemma 2.5} to~\ref{Lemma 2.8}.
\begin{lemma}\label{Lemma 2.5}
There is a positive constant \(\theta = \theta(n,g^+, L, F,\gamma)\) such that if
\begin{equation}\label{2.13}
|A_{k^0, \frac{R}{2}}| \leq \theta R^n,
\end{equation}
then at least one of the following two inequalities hol{\rm d}s:
\begin{equation}\label{2.14}
H \leq \left( \frac{\gamma  + \gamma_1 + 1}{\gamma} \right)^{\frac{1}{f^+(R)}} R,
\end{equation}
or
\begin{equation}\label{2.15}
\max_{B_{\frac{R}{4}}} w(x) \leq k^0 + \frac{H}{2},
\end{equation}
where
\[
0 < H < \psi, \quad k^0 = \max_{B_R} w(x) - H.
\]
\end{lemma}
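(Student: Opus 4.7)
The approach is a De~Giorgi iteration on dyadic truncations closed by Lemma~\ref{Lemma 2.1}. Define $k_h := k^0 + \tfrac{H}{2} - \tfrac{H}{2^{h+1}}$ and $\rho_h := \tfrac{R}{4} + \tfrac{R}{2^{h+2}}$, with $Y_h := |A_{k_h,\rho_h}|$, so that $k_h\nearrow k^0 + H/2$, $\rho_h\searrow R/4$, $k_{h+1}-k_h = H/2^{h+2}$, and $\rho_h-\rho_{h+1} = R/2^{h+3}$, while~\eqref{2.12} guarantees that the threshold condition~\eqref{2.5} for the class $\mathcal{B}_{G(x,t)}$ is met at every step. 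The goal is to show $Y_h\to 0$, which is equivalent to~\eqref{2.15}. If~\eqref{2.14} already holds there is nothing to prove; otherwise $H/R > 1$ and $(H/R)^{f^+(R)} > (\gamma+\gamma_1+1)/\gamma$, so the absorption inequality $\gamma_1 < \gamma(H/R)^{f^+(R)}$ is available throughout the iteration.

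I would first derive a Caccioppoli-type energy bound by applying~\eqref{2.4} with $k = k_h$, $\rho = \rho_h$, $\sigma\rho_h = R/2^{h+3}$. Since $w - k_h \leq H$ on $A_{k_h,\rho_h}$, Proposition~\ref{zoo}(1) gives $G(x, 2^{h+3}H/R) \leq 2^{(h+3)g^+} G(x,H/R)$, and Lemma~\ref{lem22}(3) gives $G(x,H/R) \leq F(H/R)^{f^+(R)}$; absorbing $\gamma_1$ yields
\begin{equation*}
\int_{A_{k_h,\rho_{h+1}}} G(x,|Dw|)\,\mathrm{d}x \leq \gamma(F+1)\,2^{(h+3)g^+}\,(H/R)^{f^+(R)}\,Y_h.
\end{equation*}
Next, Lemma~\ref{Lemma 2.2} applied on $B_{\rho_{h+1}}$ with levels $k_h<k_{h+1}$ gives
\begin{equation*}
\frac{H}{2^{h+2}}\,Y_{h+1}^{(n-1)/n} \leq \beta\,\frac{\rho_{h+1}^n}{|B_{\rho_{h+1}}\setminus A_{k_h,\rho_{h+1}}|}\int_{A_{k_h,\rho_{h+1}}}|Dw|\,\mathrm{d}x,
\end{equation*}
and, using $A_{k_h,\rho_{h+1}}\subset A_{k^0,R/2}$ together with the hypothesis $|A_{k^0,R/2}|\leq\theta R^n$, the denominator is bounded below by $c(n)\rho_{h+1}^n$ as soon as $\theta$ is smaller than a purely dimensional constant.

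The decisive step is the bound on $\int|Dw|\,\mathrm{d}x$. Rather than using Proposition~\ref{zoo}(3) with the universal exponent $g^-$, I would invoke H\"older's inequality with the \emph{local} Lebesgue exponent $f^-(R)$: by Lemma~\ref{lem22}(3), $|Dw|^{f^-(R)} \leq F\,G(x,|Dw|)$ on $\{|Dw|\geq 1\}$, so
\begin{equation*}
\int_{A_{k_h,\rho_{h+1}}}|Dw|\,\mathrm{d}x \leq C\Bigl(\int G(x,|Dw|)\,\mathrm{d}x\Bigr)^{1/f^-(R)}Y_h^{(f^-(R)-1)/f^-(R)} + C'\,Y_h.
\end{equation*}
Substituting the energy bound and using $|A_{k_h,\rho_{h+1}}|\leq Y_h$, the exponent of $Y_h$ collapses to $1$ and one obtains
\begin{equation*}
Y_{h+1}^{(n-1)/n}\leq C_2\,b^h\,H^{-1}\,(H/R)^{f^+(R)/f^-(R)}\,Y_h.
\end{equation*}
The cancellation is now the heart of the argument: writing $(H/R)^{f^+(R)/f^-(R)} = (H/R)\cdot(H/R)^{(f^+(R)-f^-(R))/f^-(R)}$ and using Lemma~\ref{lem22}(1) and (4) together with Lemma~\ref{remff} (applied since $H\leq M$), the oscillation satisfies $(H/R)^{f^+(R)-f^-(R)}\leq 2L$, hence $(H/R)^{f^+(R)/f^-(R)} \leq (2L)^{1/g^-}(H/R)$. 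Therefore $Y_{h+1}^{(n-1)/n}\leq C_3\,b^h\,R^{-1}\,Y_h$; raising to $n/(n-1)$ and normalising $Z_h := Y_h/R^n$ kills the $R$-dependence, leaving
\begin{equation*}
Z_{h+1}\leq C_\star\,b_\star^h\,Z_h^{\,1+1/(n-1)}
\end{equation*}
with $C_\star, b_\star$ depending only on $n, g^-, g^+, F, L, \gamma$. Lemma~\ref{Lemma 2.1} then yields $Z_h\to 0$ once $Z_0$ is below the associated universal threshold, and taking $\theta$ as the smaller of this threshold and the dimensional constant from the geometric step concludes~\eqref{2.15}.

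The hard part will be guaranteeing that the recursion coefficient $C_\star$ is independent of $M, H, R$, and $\gamma_1$. Three ingredients must cooperate: the absorption of $\gamma_1$ enabled by the failure of~\eqref{2.14}; the sharp H\"older exponent $f^-(R)$ (not the universal $g^-$), so that the ratio $f^+(R)/f^-(R)$ is essentially $1$; and the log-H\"older hypothesis~\eqref{GG3}, which through Lemma~\ref{lem22}(4) and Lemma~\ref{remff} provides the crucial bound $(H/R)^{f^+(R)-f^-(R)}\leq 2L$. Without~\eqref{GG3} this cancellation would fail and the threshold $\theta$ would acquire an unwanted $R$-dependence, breaking the entire scheme.
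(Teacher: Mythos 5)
Your proposal is correct and follows essentially the same strategy as the paper: a De Giorgi dyadic iteration on the sets $A_{k_h,\rho_h}$, absorption of $\gamma_1$ via the failure of~\eqref{2.14}, H\"older's inequality with the local exponent $f^-(R)$ in combination with Lemma~\ref{Lemma 2.2}, the cancellation $(H/R)^{f^+(R)-f^-(R)}\leq 2L$ via Lemmas~\ref{remff} and~\ref{lem22}(4), and closure by Lemma~\ref{Lemma 2.1}. The only cosmetic differences are that you apply H\"older directly to $\int|Dw|$ rather than first estimating $\int_{D_{h+1}}|Dw|^{f^-(R)}$, and you rescale to $Z_h=Y_h/R^n$ at the end rather than building the factor $R^{-n}$ into the definition of $Y_h$ from the start.
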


\begin{proof}
If inequality \eqref{2.14} does not hold, i.e., if we assume
\begin{equation}\label{2.144}
\left( \frac{\gamma + \gamma_1 + 1}{\gamma} \right) <  \left(\frac{H}{R}\right)^{f^+(R)}.
\end{equation}
For \(h = 0, 1, 2, \ldots\), set
\[
\rho_h = \frac{R}{4} + \frac{R}{2^{h+2}}, \quad k_h = k^0 + \frac{H}{2} - \frac{H}{2^{h+1}},
\]
\[
Y_h = R^{-n}|A_{k_h, \rho_h}|, \quad D_{h+1} = A_{k_h, \rho_{h+1}} \backslash A_{k_{h+1}, \rho_{h+1}}.
\]
{\bf Claim :} \(Y_h \to 0\) as \(h \to \infty\).\\
Applying inequality \eqref{2.4} to \(k = k_h\), \(\rho = \rho_h\) and \(\rho - \sigma \rho = \rho_{h+1}\) for \(h = 0, 1, 2, \ldots\), and using Proposition~\ref{zoo} and Lemma~\ref{lem22} (with $t=\frac{H}{R}>1$), we obtain
\begin{align}\label{2.17}
\begin{split}
 \int_{A_{k_h, \rho_{h+1}}} G(x,|Dw|){\rm d}x {\rm d}x
&\leq \gamma \int_{A_{k_h, \rho_h}}G\left(x,\left| \frac{w(x) - k_h}{\rho_h - \rho_{h+1}} \right|\right) {\rm d}x + \gamma_1 |A_{k_h, \rho_h}|\\
& \leq \gamma \int_{A_{k_h, \rho_h}}G\left(x,\left| \frac{\max_{B_R} w(x) - \left( k^0 + \frac{H}{2} - \frac{H}{2^{h+1}}\right)}{\frac{R}{2^{h+2}}-\frac{R}{2^{h+3}}} \right|\right) {\rm d}x + \gamma_1 |A_{k_h, \rho_h}|\\
& \leq \gamma \int_{A_{k_h, \rho_h}}G\left(x, \frac{ \frac{H}{2} + \frac{H}{2^{h+1}}}{\frac{R}{2^{h+3}}} \right) {\rm d}x + \gamma_1 |A_{k_h, \rho_h}|\\
& \leq \gamma \int_{A_{k_h, \rho_h}}G\left(x, \frac{ \frac{(2^{h}+1)H}{2^{h+1}}}{\frac{R}{2^{h+3}}} \right) {\rm d}x + \gamma_1 |A_{k_h, \rho_h}|\\
& \leq \gamma \int_{A_{k_h, \rho_h}}G\left(x,  \frac{2^{h+3}H}{R} \right) {\rm d}x + \gamma_1 |A_{k_h, \rho_h}|\\
& \leq \gamma \left(2^{h+3}\right)^{g^+} \int_{A_{k_h, \rho_h}}G\left(x,  \frac{H}{R} \right) {\rm d}x + \gamma_1 |A_{k_h, \rho_h}|\\
& \leq \gamma \left[\left(2^{h+3}\right)^{g^+}\int_{A_{k_h, \rho_h}}F\left(\frac{H}{R} \right)^{f^+(R)}   {\rm d}x  + \frac{\gamma_1}{\gamma} |A_{k_h, \rho_h}|\right]\\
& \leq \gamma F\left[\left(2^{h+3}\right)^{g^+} H^{f^+(R)}R^{-f^+(R)}|A_{k_h, \rho_h}| + \frac{\gamma_1}{\gamma} |A_{k_h, \rho_h}|\right].
\end{split}
\end{align}
In light of Lemma~\ref{lem22}, we see that
\begin{align}\label{Ge2h}
\begin{split}
    \int_{A_{k_h, \rho_{h+1}}} G(x,|Dw|){\rm d}x &\geq F^{-1}\left[\int_{A_{k_h, \rho_{h+1}}} |Dw|^{f^-(R)}{\rm d}x - |A_{k_h, \rho_{h+1}}|\right] \\
    & \geq F^{-1}\left[\int_{D_{h+1}} |Dw|^{f^-(R)}{\rm d}x - |A_{k_h, \rho_{h}}|\right].
       \end{split}
\end{align}
Therefore, by inequality \eqref{2.17}, it follows that
\begin{align}\label{2.18}
\begin{split}
\int_{D_{h+1}} |Dw|^{f^-(R)}{\rm d}x &\leq \gamma F^2\left[\left(2^{h+3}\right)^{g^+}H^{f^+(R)}R^{-f^+(R)}|A_{k_h, \rho_h}|   + \frac{\gamma_1}{\gamma} |A_{k_h, \rho_h}|\right]+|A_{k_h, \rho_h}|\\
&\leq \gamma F^2\left[\left(2^{h+3}\right)^{g^+}H^{f^+(R)}R^{-f^+(R)}|A_{k_h, \rho_h}|   + \frac{\gamma+\gamma_1+1}{\gamma} |A_{k_h, \rho_h}|\right]\\
&\leq \gamma F^2\left[\left(2^{h+3}\right)^{g^+}H^{f^+(R)}R^{-f^+(R)}   + \frac{\gamma+\gamma_1+1}{\gamma} \right]|A_{k_h, \rho_h}|\\
&\leq \gamma F^2\left[\left(2^{h+3}\right)^{g^+}H^{f^+(R)}R^{-f^+(R)}   + \frac{\gamma+\gamma_1+1}{\gamma} \right]R^nY_h.
\end{split}
\end{align}
Hence, by inequalities \eqref{2.144} and \eqref{2.18}, we have 
\begin{align}\label{Ge6}
\begin{split}
\int_{D_{h+1}} |Dw|^{f^-(R)}{\rm d}x 
&\leq  \gamma F^2\left[\left(2^{h+3}\right)^{g^+}H^{f^+(R)}R^{-f^+(R)}   + \frac{\gamma+\gamma_1+1}{\gamma} \right]R^nY_h\\
& \leq \gamma F^2\left[\left(2^{h+3}\right)^{g^+}H^{f^+(R)}R^{-f^+(R)}   + \left(HR^{-1}\right)^{f^+(R)} \right]R^nY_h\\
&\leq \gamma F^2\left(2^{h+4}\right)^{g^+} H^{f^+(R)} R^{n-f^+(R)}Y_h.
\end{split}
\end{align}
Consequently, using Hölder's inequality and inequality \eqref{Ge6}, we obtain
\begin{align}\label{2.20}
\begin{split}
\int_{D_{h+1}} |Dw| {\rm d}x &\leq \left( \int_{D_{h+1}} |Dw|^{f^-(R)} {\rm d}x \right)^{\frac{1}{f^-(R)}} |D_{h+1}|^{\frac{f^-(R)-1}{f^-(R)}}\\
&\leq \left( \gamma F^2\left(2^{h+4}\right)^{g^+} H^{f^+(R)} R^{n-f^+(R)}Y_h\right)^{\frac{1}{f^-(R)}} (R^n Y_h)^{\frac{f^-(R)-1}{f^-(R)}}\\
&= \left( \gamma F^2\left(2^{h+4}\right)^{g^+} H^{f^+(R)} \right)^{\frac{1}{f^-(R)}}R^{\frac{nf^-(R)-f^+(R)}{f^-(R)}}Y_h.
\end{split}
\end{align}
Applying inequality \eqref{2.2} to \(k = k_h\), \(l = k_{h+1}\) and \(\rho = \rho_{h+1}\), we get
\begin{align}\label{2.21}
\begin{split}
\int_{D_{h+1}} |Dw| {\rm d}x &\geq \beta^{-1} (k_{h+1} - k_h)(R^n Y_{h+1})^{1-\frac{1}{n}} \rho_{h+1}^{-n} |B_{\rho_{h+1}} \backslash A_{k_h, \rho_{h+1}}|\\
&\geq \beta^{-1} 2^{-(h+2)} HR^{n-1} Y_{h+1}^{1-\frac{1}{n}} 2^n R^{-n} \left(|B_{\frac{R}{4}}| - |A_{k^0, \frac{R}{2}}|\right).
\end{split}
\end{align}
If inequality \eqref{2.13} hol{\rm d}s and
\[
\theta \leq \frac{1}{2} \cdot 4^{-n} \omega_n,
\]
then
\[
|B_{R/4}| - |A_{k^0, \frac{R}{2}}| \geq 4^{-n}R^n\omega_n - \frac{1}{2} \cdot 4^{-n}\omega_nR^n = 2^{-2n-1}\omega_nR^n
\]
and consequently from inequality \eqref{2.21}, it follows that
\begin{equation}\label{eqeq1}
\int_{D_{h+1}} |Dw| {\rm d}x \geq \beta^{-1}2^{-(h+3+n)}HR^{n-1}\omega_n Y_{h+1}^{1-\frac{1}{n}}.
\end{equation}
From inequalities \eqref{2.20}, \eqref{eqeq1} and the fact that
\[0<H<\psi\leq M,\ M\geq 1,\ \gamma\geq 1,\ L\geq 1,\ R<1,\] 
we deduce, by Lemma \ref{remff}, that
\begin{align}
\begin{split}
Y_{h+1}^{1-\frac{1}{n}} &\leq \beta 2^{h+3+n}H^{-1}R^{1-n}\omega_n^{-1} \int_{D_{h+1}} |Dw| {\rm d}x\\
&\leq \beta 2^{h+3+n}H^{-1}R^{1-n}\omega_n^{-1}\left( \gamma F^2\left(2^{h+4}\right)^{g^+} H^{f^+(R)} \right)^{\frac{1}{f^-(R)}}R^{\frac{nf^-(R)-f^+(R)}{f^-(R)}}Y_h\\
&= \beta 2^{h+3+n}\omega_n^{-1}H^{\frac{f^+(R)-f^-(R)}{f^{-}(R)}}\left( \gamma F^2\left(2^{h+4}\right)^{g^+}  \right)^{\frac{1}{f^-(R)}}R^{\frac{nf^-(R)-f^+(R)}{f^-(R)}-n+1}Y_h\\
& \leq \beta 2^{h+3+n}\omega_n^{-1}M^{\frac{f^+(R)-f^-(R)}{f^{-}(R)}}\left( \gamma F^2\left(2^{h+4}\right)^{g^+}  \right)^{\frac{1}{f^-(R)}} R^{\frac{f^-(R)-f^+(R)}{f^-(R)}}Y_h\\
& \leq \gamma F^2\left(2^{h+4}\right)^{g^+}\beta 2^{h+3+n}\omega_n^{-1}2^{\frac{1}{f^{-}(R)}} R^{f^-(R)-f^+(R)}Y_h\\
& \leq \gamma F^2\left(2^{h+4}\right)^{g^+}\beta 2^{h+4+n}\omega_n^{-1} R^{f^-(R)-f^+(R)}Y_h\\
& \leq 2^{(g^++1)h}\gamma F^2\beta 2^{4g^++4+n}\omega_n^{-1} R^{f^-(R)-f^+(R)}Y_h\\
& \leq 2^{(g^++1)h}\gamma F^2\beta 2^{4g^++4+n}\omega_n^{-1} L Y_h\\
&\leq c_1 b_1^h Y_h,
\end{split}
\end{align}
where \(b_1 = 2^{(g^++1)} > 1\), \(c_1 = \gamma F^2\beta 2^{4g^++4+n}\omega_n^{-1} L+1 > 1\) and \(\beta = \beta(n)\) as in Lemma \ref{Lemma 2.2}. Hence
\[
Y_{h+1} \leq c b^h Y_h^{1+\frac{1}{n-1}} = c b^h Y_h^{1+\varepsilon},
\]
where \(b = b_1^{\frac{n}{n-1}} = b(n,g^+) > 1\), \(c = c_1^{\frac{n}{n-1}} = c(n,g^+, L, F,\gamma)\) and \(\varepsilon = \frac{1}{n-1}\).
Now we choose
\[
\theta = \min\left\{ \frac{1}{2} \cdot 4^{-n}\omega_n, \quad c^{-\frac{1}{\varepsilon}}b^{-\frac{1}{\varepsilon^2}} \right\}.
\]
Then \(\theta = \theta(n,g^+, L, F,\gamma)\) and when
\[
|A_{k^0, \frac{R}{2}}| \leq \theta R^n \quad \text{and} \quad H > \left( \frac{\gamma  + \gamma_1 + 1}{\gamma} \right)^{\frac{1}{f^+(R)}}R,
\]
we have
\[
Y_0 = R^{-n}|A_{k^0, \frac{R}{2}}| \leq \theta \leq c^{-\frac{1}{\varepsilon}}b^{-\frac{1}{\varepsilon^2}}
\]
and consequently, by Lemma \ref{Lemma 2.1}, \(Y_h \to 0\) as \(h \to \infty\). This shows the Claim.\\
From the claim, it yields that
\[
\max_{B_{\frac{R}{4}}} w(x) \leq \lim_{h \to \infty} k_h = k^0 + \frac{H}{2}.
\]
Lemma \ref{Lemma 2.5} is proved.
\end{proof}

\begin{lemma}\label{Lemma 2.6}
For any given \(\theta > 0\), there is a natural number \(s = s(n,g^-,g^+,F,L,\beta,\gamma) > 2\) such that either
\begin{equation}\label{2.29}
\psi \leq 2^s \left( \frac{\gamma + \gamma_1 + 1}{\gamma} \right)^{\frac{1}{f^+(R)}} R
\end{equation}
or
\begin{equation}\label{2.30}
|A_{k^0, \frac{R}{2}}| \leq \theta R^n
\end{equation}
hol{\rm d}s, where
\[
k^0 = \max_{B_R} w(x) - 2^{-s+1}\psi.
\]
\end{lemma}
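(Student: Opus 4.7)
The plan is to proceed by contrapositive: assume the first alternative \eqref{2.29} fails and show that the measure bound \eqref{2.30} must hold provided $s$ is chosen large enough. Thus suppose $\psi > 2^s c_0 R$, where $c_0 := \bigl((\gamma+\gamma_1+1)/\gamma\bigr)^{1/f^+(R)}$; this in particular forces $\psi/R > 2^s \geq 1$, which activates the upper growth bound $G(x,t) \leq F t^{f^+(R)}$ from Lemma~\ref{lem22}(3) on every argument arising below.

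First, I would introduce a dyadic sequence of intermediate truncation levels
\[
k_i := \max_{B_R} w - 2^{-i+1}\psi, \qquad i = 1, \ldots, s,
\]
so that $k_1 = k'$, $k_s = k^0$, and $k_{i+1} - k_i = 2^{-i}\psi$. Set $a_i := |A_{k_i, R/2}|$. Since $k_i \geq k'$ for every $i \geq 1$, the hypothesis \eqref{2.8} guarantees $|B_{R/2}| - a_i \geq \tfrac{1}{2}|B_{R/2}|$, which is what makes the isoperimetric comparison effective. I would then combine the De~Giorgi isoperimetric inequality (Lemma~\ref{Lemma 2.2}, applied with thresholds $k_i < k_{i+1}$ on $B_{R/2}$) with an estimate for $\int_{D_i} |Dw|\,dx$ on $D_i := A_{k_i, R/2}\setminus A_{k_{i+1}, R/2}$. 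The latter is obtained by H\"older's inequality with exponent $f^-(R)$, followed by the conversion $t^{f^-(R)} \leq F\,G(x,t) + 1$, and then the energy estimate \eqref{2.4} with $\rho = R$ and $\sigma = 1/2$. Using $w - k_i \leq 2^{-i+1}\psi$ on $A_{k_i, R}$ together with $G(x,t)\leq F t^{f^+(R)}$ yields
\[
\int_{A_{k_i, R/2}} G(x,|Dw|)\,dx \;\leq\; C_1 \bigl(2^{-i}\psi/R\bigr)^{f^+(R)} |A_{k_i, R}|,
\]
where the lower-order term $\gamma_1 |A_{k_i, R}|$ is absorbed into the main one by virtue of the standing assumption on $\psi$.

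Combining the isoperimetric and energy estimates and invoking Lemma~\ref{lem22}(4) to tame the mismatch factor $R^{f^-(R)-f^+(R)} \leq L$, a recursion of the shape
\[
a_{i+1}^{(n-1)/n} \;\leq\; C_2 \, |A_{k_i, R}|^{1/f^-(R)} \, |D_i|^{1 - 1/f^-(R)}
\]
would emerge, with $C_2 = C_2(n, g^-, g^+, F, L, \beta, \gamma)$. Raising both sides to the exponent $\tfrac{n}{n-1}\cdot\tfrac{f^-(R)}{f^-(R)-1}$, summing over $i = 1, \ldots, s-1$, and using the telescoping bound $\sum_i |D_i| \leq |A_{k_1, R}| \leq |B_R|$ together with the monotonicity $a_s \leq a_i$, one is led to an estimate of the form $(s-1)\,a_s^{\kappa} \leq C_3 R^{n\kappa}$ for some exponent $\kappa > 0$. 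Choosing $s$ large enough in terms of $\theta$ and the listed constants then forces $a_s \leq \theta R^n$, which is precisely \eqref{2.30}.

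The principal difficulty is managing the $R$-dependence of the exponents $f^\pm(R)$ throughout the iteration. Lemma~\ref{lem22}(4) is the indispensable ingredient that renders the recursion constants independent of $R$: without the uniform bound $R^{f^-(R)-f^+(R)}\leq L$, the constant $C_2$ would blow up as $R \to 0$ and no uniform choice of $s = s(n, g^-, g^+, F, L, \beta, \gamma)$ would be possible.
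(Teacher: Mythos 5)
Your proposal follows essentially the same route as the paper: dyadic truncation levels interpolating between $k'$ and $k^0$, the energy inequality \eqref{2.4} converted via Lemma~\ref{lem22}(3) into an $L^{f^-(R)}$ estimate for $Dw$ on the slabs $D_i$, the De~Giorgi isoperimetric lemma, a telescoping sum, and Lemma~\ref{lem22}(4) to render the constants $R$-uniform before choosing $s$. The displayed recursion is schematic and omits the $R^{-f^+(R)/f^-(R)}$ factor coming from the energy estimate (as well as the bound $\psi^{f^+(R)-f^-(R)}\leq M^{f^+(R)-f^-(R)}\leq 2$ supplied by Lemma~\ref{remff}), but since you flag it as such and correctly identify the tools needed to tame the $R$- and $\psi$-dependent powers, the argument is sound.
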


\begin{proof}
If inequality \eqref{2.29} does not hold, i.e., if we assume
\begin{equation}\label{2.32}
\psi > 2^s \left( \frac{\gamma + \gamma_1 + 1}{\gamma} \right)^{\frac{1}{f^+(R)}} R,
\end{equation}
where the natural number \( s \) will be chosen later, we proceed as follows.\\
For \( t = 0, \ldots, s - 1 \), define
\[
k_t = \max_{B_R} w(x) - 2^{-t} \psi, \quad D_t = A_{k_t, \frac{R}{2}} \setminus A_{k_{t+1}, \frac{R}{2}}.
\]
By \eqref{2.32}, one has  
\begin{equation}\label{2.32m}
\frac{ 2^{-t+1}\psi}{R}> 1,\quad \text{for } t = 0, \ldots, s - 1.
\end{equation}
Applying inequality \eqref{2.4} to \(\rho = R\), \(\rho - \sigma \rho = \frac{R}{2}\) and \(k = k_t\) for \(t = 0, \ldots, s - 2\), and using Proposition~\ref{zoo}, Lemma ~\ref{lem22} (with $t= \frac{2^{-t+1}\psi}{R} $), we  obtain
\begin{align}\label{Ge1}
\begin{split}
\int_{A_{k_t, \frac{R}{2}}} G(x,|Dw|){\rm d}x &\leq \gamma\left[\int_{A_{k_t, R}}G\left(x,\frac{\omega(x)-k_t}{\frac{R}{2}}\right){\rm d}x +\frac{\gamma_1}{\gamma} |A_{k_t, R}|\right]\\
& \leq\gamma\left[\int_{A_{k_t, R}}G\left(x,\frac{2^{-t}\psi}{\frac{R}{2}}\right){\rm d}x +\frac{\gamma_1}{\gamma} |A_{k_t, R}|\right]\\
&\leq \gamma\left[\int_{A_{k_t, R}}G\left(x,\frac{2^{-t+1}\psi}{R}\right){\rm d}x+\frac{\gamma_1}{\gamma} |A_{k_t, R}|\right]\\
& \leq \gamma \left[\int_{A_{k_t, R}}F \left(\frac{2^{-t+1}\psi}{R}\right)^{f^+(R)}{\rm d}x+\frac{\gamma_1}{\gamma} |A_{k_t, R}|\right]\\
& \leq \gamma F\left[\left(2^{-t+1}\psi\right)^{f^+(R)}R^{-f^+(R)}|A_{k_t, R}|+\frac{\gamma_1}{\gamma} |A_{k_t, R}|\right].
\end{split}
\end{align}
On the other hand, from Lemma~\ref{lem22}, we deduce that
\begin{align}\label{Ge2}
\begin{split}
    \int_{A_{k_t, \frac{R}{2}}} G(x,|Dw|){\rm d}x &\geq F^{-1}\left[\int_{A_{k_t, \frac{R}{2}}} |Dw|^{f^-(R)}{\rm d}x - |A_{k_t, \frac{R}{2}}|\right] \\
    &\geq F^{-1}\left[\int_{D_{t}} |Dw|^{f^-(R)}{\rm d}x - |A_{k_t, R}|\right].
   \end{split}
\end{align}
Thus, combining  \eqref{2.32}, \eqref{Ge1}, and \eqref{Ge2}, we get
\begin{align}\label{Ge3}
\begin{split}
  \int_{D_t} |Dw|^{f^-(R)}{\rm d}x & \leq \gamma F^2\left[\left(2^{-t+1}\psi\right)^{f^+(R)}R^{-f^+(R)}|A_{k_t, R}|+\frac{\gamma_1}{\gamma} |A_{k_t, R}|\right]+|A_{k_t, R}|\\
  & \leq \gamma F^2\left[\left(2^{-t+1}\psi\right)^{f^+(R)}R^{-f^+(R)}|A_{k_t, R}|+\frac{\gamma+\gamma_1+1}{\gamma} |A_{k_t, R}|\right]\\
  &  \leq\gamma F^2\left[\left(2^{-t+1}\psi\right)^{f^+(R)}R^{-f^+(R)}+\left( 2^{-s}\psi R^{-1}\right)^{f^+(R)} \right]|A_{k_t, R}|\\
  &\leq  \gamma F^2\left[\left(2^{-t+1}\psi\right)^{f^+(R)}R^{-f^+(R)}+\left( 2^{-s}\psi R^{-1}\right)^{f^+(R)} \right]\omega_n R^n\\
  &\leq  \gamma F^2\left[\left(2^{-t+1}\psi\right)^{f^+(R)}+\left(2^{-t}\psi\right)^{f^+(R)} \right]\omega_n R^{n-f^+(R)}\\
  &\leq  \gamma F^22^{f^+(R)+1}\omega_n\left(2^{-t}\psi\right)^{f^+(R)} R^{n-f^+(R)}\\
  &\leq  \gamma F^22^{g^++1}\omega_n\left(2^{-t}\psi\right)^{f^+(R)} R^{n-f^+(R)},\quad (t = 0, \ldots, s - 2).
  \end{split}
\end{align}
Now, applying inequality \eqref{2.2} to \(w\) with \(k = k_t\), \(l = k_{t+1}\), \(\rho = \frac{R}{2}\), for \(t = 0, \ldots, s - 2\), we obtain
\begin{align}\label{2.34}
\begin{split}
|A_{k_{s-1}, \frac{R}{2}}|^{1-\frac{1}{n}} &\leq |A_{k_{t+1}, \frac{R}{2}}|^{1-\frac{1}{n}}\\
&\leq \beta 2^{-n} R^n (k_{t+1} - k_t)^{-1} |B_{\frac{R}{2}} \backslash A_{k_t, \frac{R}{2}}|^{-1} \int_{D_t} |Dw| {\rm d}x\\
&\leq \beta 2^{-n} R^n (2^{-(t+1)}\psi)^{-1} (2^{-1} \omega_n (2^{-1} R)^n)^{-1} \left( \int_{D_t} |Dw|^{f^-(R)} {\rm d}x \right)^{\frac{1}{f^-(R)}} |D_t|^{\frac{f^-(R)-1}{f^-(R)}}\\
&\leq 4 \beta \omega_n^{-1} 2^t \psi^{-1} \left( \int_{D_t} |Dw|^{f^-(R)} {\rm d}x \right)^{\frac{1}{f^-(R)}} |D_t|^{\frac{f^-(R)-1}{f^-(R)}}.
\end{split}
\end{align}
It follows, from inequalities \eqref{Ge3}, \eqref{2.34} and Lemma \ref{remff}, that
\begin{align}\label{2.35}
\begin{split}
|A_{k_{s-1}, \frac{R}{2}}|^{\frac{(n-1)f^-(R)}{n}} &\leq (4\beta\omega_n^{-1}2^t\psi^{-1})^{f^-(R)} \int_{D_t} |Dw|^{f^-(R)} {\rm d}x \cdot |D_t|^{f^-(R) - 1}\\
&\leq (4\beta\omega_n^{-1}2^t\psi^{-1})^{f^-(R)}\gamma F^22^{g^++1}\omega_n\left(2^{-t}\psi\right)^{f^+(R)} R^{n-f^+(R)}|D_t|^{f^-(R) - 1}\\
&\leq (2^n\beta\omega_n^{-1})^{g^+}\gamma F^22^{g^++1}\omega_n\left(2^{-t}\psi\right)^{f^+(R)-f^-(R)} R^{n-f^+(R)}|D_t|^{f^-(R) - 1}\\
&\leq (2^n\beta\omega_n^{-1})^{g^+}\gamma F^22^{g^++1}\omega_n M^{f^+(R)-f^-(R)} R^{n-f^+(R)}|D_t|^{f^-(R) - 1}\ \ (\text{since}\ 2^{-t}\psi \leq \psi \leq M )\\
&\leq (2^n\beta\omega_n^{-1})^{g^+}\gamma F^22^{g^++2}\omega_n  R^{n-f^+(R)}|D_t|^{f^-(R) - 1},\quad (t = 0, \ldots, s - 2).
\end{split}
\end{align}
Consequently, by inequality \eqref{2.35}, we deduce that
\begin{equation}\label{2.36}
|A_{k_{s-1}, \frac{R}{2}}|^{\frac{(n-1)f^-(R)}{n}} \leq c_1 R^{n - f^+(R)} |D_t|^{f^-(R) - 1}, \quad (t = 0, \ldots, s - 2),
\end{equation}
where \(c_1 = (2^n\beta\omega_n^{-1})^{g^+}\gamma F^22^{g^++2}\omega_n + 1 = c(n,g^+,F,\beta,\gamma) > 1\).\\
Thus, due to inequality \eqref{2.36}, we infer that
\begin{equation}\label{2.37}
|A_{k_{s-1}, \frac{R}{2}}|^{\frac{(n-1)f^-(R)}{n(f^-(R) - 1)}} \leq c_2R^{\frac{n-f^+(R)}{(f^-(R) - 1)}} |D_t|, \quad (t = 0, \ldots, s - 2),
\end{equation}
where \(c_2:=  c_1^{\frac{1}{g^--1}}=c(n,g^-,g^+,F,\beta,\gamma)\).\\
Summing inequality \eqref{2.37} with respect to \(t\) from $0$ to \(s - 2\) and noting that
\[
\sum_{t=0}^{s-2} |D_t|\leq |B_{\frac{R}{2}}| = \omega_n \left(\frac{R}{2}\right)^n,
\]
we obtain
\[
|A_{k_{s-1}, \frac{R}{2}}|^{\frac{(n-1)f^-(R)}{n(f^-(R) - 1)}} \leq \frac{c_22^{-n}\omega_n}{s  - 1} R^{\frac{n-f^+(R)}{(f^-(R) - 1)}+n}=\frac{c_22^{-n}\omega_n}{s - 1} R^{\frac{nf^-(R)-f^+(R)}{(f^-(R) - 1)}},
\]
and therefore, using Lemma \ref{lem22}, we get
\begin{align}\label{2.38}
\begin{split}
|A_{k_{s-1}, \frac{R}{2}}| &\leq \left(\frac{c_22^{-n}\omega_n}{s - 1}\right)^{\frac{n(f^-(R) - 1)}{(n-1)f^-(R)}} R^{\frac{nf^-(R)-f^+(R)}{(f^-(R) - 1)}\times\frac{n(f^-(R) - 1)}{(n-1)f^-(R)}}\\
&\leq \left(\frac{c_22^{-n}\omega_n}{s - 1}\right)^{\frac{n(f^-(R) - 1)}{(n-1)f^-(R)}} R^{\frac{nf^-(R)-f^+(R)}{f^-(R)}\times\frac{n}{(n-1)}}\\
& \leq  \left(\frac{c_22^{-n}\omega_n}{s - 1}\right)^{\frac{n(f^-(R) - 1)}{(n-1)f^-(R)}} R^{\frac{n\left(f^-(R)-f^+(R)\right)}{f^-(R)}\times\frac{n}{(n-1)}} R^n\\
& \leq  \left(\frac{c_22^{-n}\omega_n}{s - 1}\right)^{\frac{n(f^-(R) - 1)}{(n-1)f^-(R)}} L^{\frac{n}{f^-(R)}\times\frac{n}{(n-1)}} R^n\\
& \leq  \left(\frac{c_22^{-n}\omega_n}{s - 1}\right)^{\frac{n(f^-(R) - 1)}{(n-1)f^-(R)}} L^{\frac{n}{g^-}\times\frac{n}{(n-1)}} R^n.
\end{split}
\end{align}
Now we choose a natural number \(s\) such that \(\frac{c_22^{-n}\omega_n}{s - 1}<1\) and
\begin{equation}\label{2.39}
\left(\frac{c_22^{-n}\omega_n}{s - 1}\right)^{\frac{n(g^- - 1)}{(n-1)g^-}} L^{\frac{n}{g^-}\times\frac{n}{(n-1)}}\leq \theta.
\end{equation}
Note that \(s = s(\theta, c_2, n, g^-,L) = s(\theta,n,g^-,g^+,F,L,\beta,\gamma)\). Then for such \(s\), when inequality \eqref{2.29} does not hold, from inequalities \eqref{2.38} and \eqref{2.39}, we have
\[
|A_{k_{s-1}, \frac{R}{2}}| \leq \theta R^n
\]
which means \(|A_{k^0, \frac{R}{2}}| \leq \theta R^n\), i.e., inequality \eqref{2.30} hol{\rm d}s, since
\[
k^0 = \max_{B_R} w(x) - 2^{-s+1}\psi = k_{s-1}.
\]
Lemma \ref{Lemma 2.6} is proved.
\end{proof}
\begin{lemma}\label{Lemma 2.7}
There is a number \(s = s(n, g^-, g^+,  L,F, \gamma) > 2\) such that
\begin{equation}\label{2.40}
\psi \leq 2^s \max \left\{ \max_{B_R} w(x) - \max_{B_{\frac{R}{4}}} w(x), \left( \frac{\gamma  + \gamma_1 + 1}{\gamma} \right)^{\frac{1}{f^+(R)}} R \right\},\text{    for any } R \in (0, R_0]. 
\end{equation}
\end{lemma}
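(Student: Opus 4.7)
The plan is to deduce Lemma~\ref{Lemma 2.7} by chaining together Lemma~\ref{Lemma 2.5} and Lemma~\ref{Lemma 2.6}. These two lemmas are already set up with the alternative $|A_{k^0,R/2}| \leq \theta R^n$ as the common pivot, so the only work is to match constants and verify the hypothesis $0 < H < \psi$ of Lemma~\ref{Lemma 2.5}.

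First I would fix $\theta = \theta(n,g^+,L,F,\gamma)$ exactly as provided by Lemma~\ref{Lemma 2.5}, and then feed this $\theta$ into Lemma~\ref{Lemma 2.6} to obtain the natural number $s_1 = s_1(\theta,n,g^-,g^+,F,L,\beta,\gamma) > 2$. Since $\beta = \beta(n)$, the dependence collapses to $s_1 = s_1(n,g^-,g^+,F,L,\gamma)$, as required by the statement. Lemma~\ref{Lemma 2.6} then gives a dichotomy: either
\[
\psi \leq 2^{s_1}\!\left(\tfrac{\gamma+\gamma_1+1}{\gamma}\right)^{\!\!1/f^+(R)} R,
\]
in which case \eqref{2.40} is immediate, or $|A_{k^0,R/2}| \leq \theta R^n$ with $k^0 = \max_{B_R} w - 2^{-s_1+1}\psi$.

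In the second case I would apply Lemma~\ref{Lemma 2.5} with the choice $H := 2^{-s_1+1}\psi$. Since $s_1 > 2$, we have $H \leq \psi/4 < \psi$, and $H > 0$ in the nontrivial case $\psi > 0$ (if $\psi=0$ the conclusion is vacuous), so the hypotheses $0 < H < \psi$ are met; moreover this $H$ makes the threshold $k^0$ of Lemma~\ref{Lemma 2.5} agree with the one coming out of Lemma~\ref{Lemma 2.6}. The conclusion of Lemma~\ref{Lemma 2.5} again splits into two subcases. If \eqref{2.14} holds, then $2^{-s_1+1}\psi \leq ((\gamma+\gamma_1+1)/\gamma)^{1/f^+(R)} R$, giving $\psi \leq 2^{s_1-1}((\gamma+\gamma_1+1)/\gamma)^{1/f^+(R)} R$. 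If instead \eqref{2.15} holds, then
\[
\max_{B_{R/4}} w \leq k^0 + \tfrac{H}{2} = \max_{B_R} w - 2^{-s_1+1}\psi + 2^{-s_1}\psi = \max_{B_R} w - 2^{-s_1}\psi,
\]
so $\psi \leq 2^{s_1}\bigl(\max_{B_R} w - \max_{B_{R/4}} w\bigr)$.

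Taking $s := s_1$ covers all three cases and yields \eqref{2.40}, with $s$ depending only on the stated parameters. I do not anticipate a serious obstacle here since the two preceding lemmas are designed to interlock; the only point requiring care is the bookkeeping that ensures the $k^0$ produced by Lemma~\ref{Lemma 2.6} coincides with the one used in Lemma~\ref{Lemma 2.5} when $H = 2^{-s+1}\psi$, and the verification that this $H$ is admissible (i.e., $0 < H < \psi$), both of which follow from $s > 2$.
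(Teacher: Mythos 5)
Your proposal is correct and follows essentially the same route as the paper: fix $\theta$ from Lemma~\ref{Lemma 2.5}, feed it into Lemma~\ref{Lemma 2.6} to get $s$, and in the second alternative apply Lemma~\ref{Lemma 2.5} with $H = 2^{-s+1}\psi$ (the paper in fact contains a small typo, writing ``Set $H > 2^{-s+1}\psi$'' where equality is clearly intended, since the subsequent identity $k^0 + H/2 = \max_{B_R} w - 2^{-s}\psi$ requires it; you resolve this correctly). The only cosmetic difference is that you retain the subcase \eqref{2.14} as a live possibility, whereas the paper observes that when \eqref{2.29} fails one has $H > ((\gamma+\gamma_1+1)/\gamma)^{1/f^+(R)}R$, so \eqref{2.14} cannot hold and \eqref{2.15} is forced; your treatment of that (vacuous) subcase still yields a valid bound, so this costs nothing.
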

\begin{proof}
Let \( R \in (0, R_0] \), and let 
\[
\theta = \theta(n, g^-, g^+, F,L, \gamma)
\]
be the constant appearing in Lemma~\ref{Lemma 2.5}. Applying Lemma~\ref{Lemma 2.6} with this choice of \( \theta \), we obtain a constant 
\[
s = s(\theta, n, g^-, g^+, F,L, \gamma) = s(n, g^-, g^+, F,L, \gamma)
\]
such that at least one of the inequalities \eqref{2.29} or \eqref{2.30} holds.
If \eqref{2.29} holds, then \eqref{2.40} follows immediately. Now suppose that \eqref{2.29} does not hold. Then, by Lemma~\ref{Lemma 2.6}, inequality \eqref{2.30} must be satisfied; that is,
\[
|A_{k^0, \frac{R}{2}}| \leq \theta R^n,
\]
where \( k^0 = \max_{B_R} w(x) - 2^{-s+1} \psi \). Set \( H > 2^{-s+1} \psi \).
 Then, since inequality \eqref{2.29} does not hold, we obtain
\[
H > 2^{-s+1} \cdot 2^s \left( \frac{\gamma  + \gamma_1 + 1}{\gamma} \right)^{\frac{1}{f^+(R)}} R > \left( \frac{\gamma + \gamma_1 + 1}{\gamma} \right)^{\frac{1}{f^+(R)}} R.
\]
Thus, by Lemma \ref{Lemma 2.5}, one has
\[
\max_{B_{\frac{R}{4}}} w(x) \leq k^0 + \frac{H}{2} = \max_{B_R} w(x) - 2^{-s}\psi
\]
i.e.,
\[
\psi \leq 2^s \left( \max_{B_R} w(x) - \max_{B_{\frac{R}{4}}} w(x) \right).
\]
This shows that inequality \eqref{2.40} hol{\rm d}s. Lemma \ref{Lemma 2.7} is proved.
\end{proof}
\begin{lemma}\label{Lemma 2.8}
For any \(R \in (0, R_0]\), at least one of the following two inequalities hol{\rm d}s:
\begin{equation}\label{2.41}
\operatorname{osc}_{B_R}u \leq \tau 2^s \frac{\gamma + \gamma_1 + 1}{\gamma} R,
\end{equation}
or
\begin{equation}\label{2.42}
\operatorname{osc}_{B_{\frac{R}{4}}}u \leq (1 - \tau^{-1} 2^{-s}) \operatorname{osc}_{B_R}u,
\end{equation}
where \(\tau = \max \left\{2, \frac{2}{\delta}\right\}\), \(s = s(n, g^-, g^+, F,L, \gamma)\) is the constant as in Lemma \ref{Lemma 2.7}.
\end{lemma}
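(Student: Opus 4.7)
The plan is to apply Lemma~\ref{Lemma 2.7} and perform a case analysis on which of the two terms inside the maximum on its right-hand side is the larger one. Recall the setup from \eqref{2.9}--\eqref{2.10}: $\psi=\tau^{-1}\operatorname{osc}_{B_R}u$, and $w=\pm u$ has been chosen so that the smallness-of-level-set condition \eqref{2.8} holds for $w$. The identity $\operatorname{osc}_{B_R}u=\tau\psi$ will be the bridge that turns an inequality on $\psi$ into the claimed dichotomy between \eqref{2.41} and \eqref{2.42}.

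In the first case, assume $\psi\leq 2^{s}\bigl(\tfrac{\gamma+\gamma_1+1}{\gamma}\bigr)^{1/f^{+}(R)}R$. Since $\gamma_1\geq 0$ gives $\tfrac{\gamma+\gamma_1+1}{\gamma}\geq 1$, and since Lemma~\ref{lem22}(2) yields $f^{+}(R)\geq g^{-}>1$, the exponent $1/f^{+}(R)$ is strictly less than $1$, hence
\[
\left(\tfrac{\gamma+\gamma_1+1}{\gamma}\right)^{1/f^{+}(R)}\leq \tfrac{\gamma+\gamma_1+1}{\gamma}.
\]
Multiplying by $\tau$ yields $\operatorname{osc}_{B_R}u=\tau\psi\leq \tau 2^{s}\tfrac{\gamma+\gamma_1+1}{\gamma}R$, which is exactly \eqref{2.41}.

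In the second case, assume instead that $\psi\leq 2^{s}\bigl(\max_{B_R}w-\max_{B_{R/4}}w\bigr)$. The key observation is that whether $w=u$ or $w=-u$, the quantity $\max_{B_R}w-\max_{B_{R/4}}w$ equals either $\max_{B_R}u-\max_{B_{R/4}}u$ or $\min_{B_{R/4}}u-\min_{B_R}u$, and both of these are nonnegative summands of the telescoping identity
\[
\operatorname{osc}_{B_R}u-\operatorname{osc}_{B_{R/4}}u=\bigl(\max_{B_R}u-\max_{B_{R/4}}u\bigr)+\bigl(\min_{B_{R/4}}u-\min_{B_R}u\bigr).
\]
Therefore $\max_{B_R}w-\max_{B_{R/4}}w\leq \operatorname{osc}_{B_R}u-\operatorname{osc}_{B_{R/4}}u$, and substituting back together with $\psi=\tau^{-1}\operatorname{osc}_{B_R}u$ gives
\[
\tau^{-1}\operatorname{osc}_{B_R}u\leq 2^{s}\bigl(\operatorname{osc}_{B_R}u-\operatorname{osc}_{B_{R/4}}u\bigr),
\]
which rearranges to \eqref{2.42}.

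I do not expect a serious technical obstacle here: the heavy analytical work has already been absorbed into Lemmas~\ref{Lemma 2.5}--\ref{Lemma 2.7}, and Lemma~\ref{Lemma 2.8} is essentially a translation of the conclusion of Lemma~\ref{Lemma 2.7} from the language of $\psi$ and $\max_{B_R}w-\max_{B_{R/4}}w$ into the language of oscillations of $u$. The only point that requires a touch of care is the exponent reduction $1/f^{+}(R)\to 1$ in Case~1, which relies crucially on the structural bound $f^{+}(R)>1$ supplied by \eqref{D22} via Lemma~\ref{lem22}(2); and the elementary but easy-to-overlook comparison $\max_{B_R}w-\max_{B_{R/4}}w\leq \operatorname{osc}_{B_R}u-\operatorname{osc}_{B_{R/4}}u$ in Case~2, which is what makes the reduction to a single function $w$ (rather than tracking $u$ and $-u$ separately) legitimate.
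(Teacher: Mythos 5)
Your proposal is correct and follows essentially the same route as the paper's proof: case analysis on the two alternatives produced by Lemma~\ref{Lemma 2.7}, dropping the exponent $1/f^{+}(R)\leq 1$ in the first case, and using $\max_{B_R}w-\max_{B_{R/4}}w\leq\operatorname{osc}_{B_R}w-\operatorname{osc}_{B_{R/4}}w=\operatorname{osc}_{B_R}u-\operatorname{osc}_{B_{R/4}}u$ (via the nonnegativity of $\min_{B_{R/4}}w-\min_{B_R}w$) in the second. Your spelling out of the telescoping identity in Case~2 is a touch more explicit than the paper's, but it is the same argument.
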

\begin{proof}
By Lemma \ref{Lemma 2.7}, inequality \eqref{2.40} hol{\rm d}s, this implies that at least one of the following two inequalities hol{\rm d}s:
\begin{equation}\label{2.43}
\psi \leq 2^s \left( \frac{\gamma  + \gamma_1 + 1}{\gamma} \right)^{\frac{1}{f^+(R)}} R,
\end{equation}
or
\begin{equation}\label{2.44}
\psi \leq 2^s \left( \max_{B_R} w(x) - \max_{B_{\frac{R}{4}}} w(x) \right).
\end{equation}
When inequality \eqref{2.43} hol{\rm d}s, we have
\[
\operatorname{osc}_{B_R}u = \tau \psi \leq \tau 2^s \left( \frac{\gamma  + \gamma_1 + 1}{\gamma} \right)^{\frac{1}{f^+(R)}} R \leq \tau 2^s \frac{\gamma + \gamma_1 + 1}{\gamma} R,
\]
i.e., inequality \eqref{2.41} hol{\rm d}s. When inequality \eqref{2.44} hol{\rm d}s, we have
\begin{align*}
\operatorname{osc}_{B_R}u, &= \tau \psi \leq \tau 2^s \left( \max_{B_R} w(x) - \max_{B_{\frac{R}{4}}} w(x) \right)\\
&\leq \tau 2^s [\operatorname{osc}_{B_R}w - \operatorname{osc}_{B_{\frac{R}{4}}}w]\\
&= \tau 2^s \left[\operatorname{osc}_{B_R}u - \operatorname{osc}_{B_{\frac{R}{4}}}u\right]
\end{align*}
this implies inequality \eqref{2.42}. Lemma \ref{Lemma 2.8} is proved.
\end{proof}
We now return to the proof of Theorem \ref{Theorem 2.1}. Combining the result of Lemma \ref{Lemma 2.3} with the conclusion of Lemma \ref{Lemma 2.8}, we deduce that 
\[
u \in C^{0,\alpha}(B_{R_0}(x_0)),
\] where
\[
\alpha = \min\left\{1, -\log_4(1 - \tau^{-1} 2^{-s})\right\} = \alpha(n, g^-, g^+, F, L,\gamma, \delta).
\]
By the arbitrarily of \(x_0 \in \Omega\), we have \(u \in C^{0,\alpha}(\Omega)\). The proof of Theorem \ref{Theorem 2.1} is completed.
\end{proof}
\begin{proof}[\textbf{Proof of Theorem \ref{Theorem 2.2}}]
    Following the same lines as the proof of Theorem \ref{Theorem 2.1}, we can establish the proof of Theorem \ref{Theorem 2.2}.
\end{proof}
\begin{remark}\label{Remark 2.3}
In analogy with Section~7 of Chapter~2 in \cite{OL}, one can define the class 
\[
\mathcal{B}_{G(x,t)}(\Omega \cup S_1, M, \gamma, \gamma_1, \delta),
\]
where \( S_1 \subset \partial \Omega \), and establish the corresponding results.
\end{remark}
\section{Application to Problems in Musielak--Orlicz--Sobolev Spaces}

This section is devoted to the regularity of weak solutions to problem \eqref{P}, with the ultimate goal of proving Theorems~\ref{Theorem 4.3} and~\ref{Theorem 4.4}.

A fundamental step in establishing Hölder continuity ($C^{0,\alpha}$) is to first prove that weak solutions are bounded. Leveraging the embedding results of Diening and Cianchi~\cite{Cianchi2024}, we begin by demonstrating the boundedness of weak solutions to \eqref{P} under both Dirichlet and Neumann boundary conditions, encompassing both critical and subcritical growth conditions on the nonlinearity. This is the subject of Theorems~\ref{Theorem 4.1w} and~\ref{Theorem 6.1w}.

Our strategy for proving boundedness is inspired by the work of Ho, Kim, Winkert, and Zhang~\cite{KKPZ}. We extend their approach to provide a unified treatment for both critical and subcritical cases, thereby yielding a more general result.

\subsection{Boundedness of Weak Solutions to Problems with Dirichlet Boundary Condition}

In this subsection, we focus on proving the boundedness of weak solutions to the Dirichlet problem~\eqref{PD}, as stated in Theorem~\ref{Theorem 4.1w}. To this end, we need the following lemma concerning the geometric convergence of sequences of numbers will be needed for the De Giorgi iteration arguments below. It can be found in Ho-Sim \cite[Lemma 4.3]{Ho-Sim-2015}. The case $\mu_1=\mu_2$ is contained in Lady{\v{z}}enskaja-Solonnikov-Ural{\cprime}ceva \cite[Chapter II, Lemma 5.6]{OL}, see also DiBenedetto \cite[Chapter I, Lemma 4.1]{EDI}.
\begin{lemma}\label{leRecur}
	Let $\{Z_h\}_{h\geq 0}$ be a sequence of positive numbers satisfying the recursion inequality
\[
Z_{h+1} \leq K b^h \left(Z_h^{1+\mu_1} + Z_h^{1+\mu_2}\right), \quad h = 0,1,2,\dots,
\]
for some constants $K > 0$, $b > 1$, and exponents $\mu_2 \geq \mu_1 > 0$. Suppose that either
\[
Z_0 \leq \min \left(1, (2K)^{-\frac{1}{\mu_1}} b^{-\frac{1}{\mu_1^2}} \right),
\]
or
\[
Z_0 \leq \min \left( (2K)^{-\frac{1}{\mu_1}} b^{-\frac{1}{\mu_1^2}}, (2K)^{-\frac{1}{\mu_2}} b^{-\frac{1}{\mu_1 \mu_2} - \frac{\mu_2 - \mu_1}{\mu_2^2}} \right).
\]
Then there exists an index $h_0=0,1,2,\ldots$ such that $Z_h \leq 1$ for all $h \geq h_0$, and moreover:
\[
Z_h \leq \min \left(1, (2K)^{-\frac{1}{\mu_1}} b^{-\frac{1}{\mu_1^2}} b^{-\frac{h}{\mu_1}} \right), \quad \forall h \geq h_0.
\]
In particular, $Z_h \to 0$ as $h \to \infty$.
\end{lemma}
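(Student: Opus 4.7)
The plan is to reduce the two-term recursion to a single-exponent form by splitting on whether $Z_h \leq 1$ or $Z_h \geq 1$, and then to iterate via the classical Lemma~\ref{Lemma 2.1}. The key preliminary observation is that $Z_h^{1+\mu_1} + Z_h^{1+\mu_2} \leq 2Z_h^{1+\mu_1}$ when $Z_h \leq 1$ (since $\mu_2 \geq \mu_1$) and $\leq 2Z_h^{1+\mu_2}$ when $Z_h \geq 1$. Thus the hypothesized recursion collapses into
\begin{equation*}
Z_{h+1} \leq 2Kb^h\, Z_h^{1+\mu_1} \quad \text{if } Z_h \leq 1, \qquad Z_{h+1} \leq 2Kb^h\, Z_h^{1+\mu_2} \quad \text{if } Z_h \geq 1.
\end{equation*}

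First I would handle Case~1, where $Z_0 \leq \min(1, (2K)^{-1/\mu_1} b^{-1/\mu_1^2})$. Setting $\theta_1 := (2K)^{-1/\mu_1} b^{-1/\mu_1^2}$, I would prove by induction on $h$ that $Z_h \leq \min(1,\, \theta_1 b^{-h/\mu_1})$. The inductive step uses the $\mu_1$-recursion above (valid since $Z_h \leq 1$) together with the identity $2K\theta_1^{\mu_1} b^{1/\mu_1}=1$, which is precisely what makes $\theta_1 b^{-h/\mu_1}$ an invariant upper bound. This is the same algebraic calculation that underlies Lemma~\ref{Lemma 2.1}; one may take $h_0=0$ and the conclusion follows directly.

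The more delicate Case~2 arises when $Z_0$ may exceed $1$ but is controlled by $\theta_2^* := (2K)^{-1/\mu_2} b^{-1/(\mu_1\mu_2)-(\mu_2-\mu_1)/\mu_2^2}$. As long as $Z_h \geq 1$, the $\mu_2$-version of the recursion is active. Adapting the proof of Lemma~\ref{Lemma 2.1} (with exponent $\mu_2$ in place of $\mu_1$), I would show that $Z_h \leq \theta_2^*\, b^{-h/\mu_2}$ for every index $h$ up to and including the first index $h_0$ with $Z_{h_0} \leq 1$; such an $h_0$ exists because $\theta_2^* b^{-h/\mu_2}\to 0$. The specific exponent in $\theta_2^*$ is tuned so that at the transition the residual value $Z_{h_0}$ automatically satisfies $Z_{h_0}\leq \theta_1 b^{-h_0/\mu_1}$, i.e., already lies in the attracting region of Case~1 for the re-indexed iteration. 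From $h_0$ onward, applying the Case~1 argument to the shifted sequence $(Z_{h+h_0})_{h\geq 0}$ with its correspondingly shifted recursion gives the stated decay $Z_h \leq \min(1,\, \theta_1 b^{-h/\mu_1})$ for all $h \geq h_0$, and thus $Z_h \to 0$.

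The main obstacle is the algebraic bookkeeping in Case~2 needed to verify that the unusual exponent $-\frac{1}{\mu_1\mu_2}-\frac{\mu_2-\mu_1}{\mu_2^2}$ in $\theta_2^*$ is exactly right for the transition to be seamless. One must track both the accumulated factor $(1+\mu_2)^{h_0}$ arising in the $\mu_2$-iteration and the shift in the $b^h$ prefactor (which becomes $b^{h+h_0}$ after re-indexing at $h_0$), and then balance these contributions against the Case~1 threshold $\theta_1 b^{-h_0/\mu_1}$. This balancing is what produces the cross term $\frac{\mu_2-\mu_1}{\mu_2^2}$ in the exponent; keeping the two exponents consistent through the transition is the primary technical burden, after which the conclusions $Z_h\leq 1$ for $h\geq h_0$, the explicit decay rate, and $Z_h\to 0$ all follow by the same mechanism as Lemma~\ref{Lemma 2.1}.
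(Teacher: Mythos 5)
The paper does not prove this lemma at all: it is imported verbatim from Ho--Sim (cited as Lemma~4.3 of that reference), with the special case $\mu_1=\mu_2$ attributed to Ladyzhenskaya--Ural'tseva and DiBenedetto. There is therefore no in-text proof to compare your argument against. Your strategy --- collapse $Z_h^{1+\mu_1}+Z_h^{1+\mu_2}$ into a single-exponent term according to whether $Z_h\le 1$ or $Z_h\ge 1$, run the classical iteration of Lemma~\ref{Lemma 2.1} in each regime, and stitch the two regimes together at the first index $h_0$ with $Z_{h_0}\le 1$ --- is structurally the correct one, and I believe the lemma can be proved this way.

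However, as written the proposal defers the heart of the argument. In Case~2 you assert that "the specific exponent in $\theta_2^*$ is tuned so that at the transition $Z_{h_0}\le\theta_1 b^{-h_0/\mu_1}$ automatically," but you never verify this, and the bookkeeping you hint at (tracking an accumulated factor $(1+\mu_2)^{h_0}$ from unrolling the $\mu_2$-iteration against the $b^{h+h_0}$ prefactor shift) is not the mechanism that makes it work. Indeed the raw comparison $\theta_2^*\,b^{-h_0/\mu_2}\le\theta_1\,b^{-h_0/\mu_1}$ gets \emph{worse}, not better, as $h_0$ grows, since the ratio equals $(\theta_2^*/\theta_1)\,b^{h_0(\mu_2-\mu_1)/(\mu_1\mu_2)}$ with $\mu_2\ge\mu_1$; the transition is not "automatic" without an additional constraint. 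The missing ingredient is an \emph{upper bound on} $h_0$: from $Z_{h_0-1}>1$ together with the induction $Z_{h_0-1}\le\theta_2^*\,b^{-(h_0-1)/\mu_2}$ one gets $b^{h_0-1}<(\theta_2^*)^{\mu_2}$, and substituting this into the ratio above and using the identity $(\theta_2^*)^{\mu_2/\mu_1}=\theta_1\,b^{-(\mu_2-\mu_1)/(\mu_1\mu_2)}$ (which is precisely where the cross term $\tfrac{\mu_2-\mu_1}{\mu_2^2}$ in $\theta_2^*$ is consumed) closes the gap. A smaller, related gap appears in Case~1: the inductive step must also preserve $Z_{h+1}\le 1$, which is not a consequence of $Z_{h+1}\le\theta_1 b^{-(h+1)/\mu_1}$ when $\theta_1>1$; one must observe that $\theta_1 b^{-h/\mu_1}>1$ forces $2Kb^h<b^{-1/\mu_1}<1$, so $Z_{h+1}\le 2Kb^h<1$ directly. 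Neither issue is fatal to the strategy, but the proposal as stated leaves exactly the step that carries the content of the lemma unproved, describing it only as the "primary technical burden."
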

\begin{proof}[\textbf{Proof of Theorem \ref{Theorem 4.1w}}]
The compactness of \( \overline{\Omega} \) implies that for any \( R > 0 \), there exists a finite open cover \( \{B_i(R)\}_{i=1}^{m} \) of balls with radius \( R \), such that
\[
\overline{\Omega} \subset \bigcup_{i=1}^{m} B_i,
\]
where each \( \Omega_i := B_i \cap \Omega \) (for \( i = 1, \ldots, m \)) is also a Lipschitz domain (see, for example, Papageorgiou–Winkert~\cite[ Theorem 1.4.86]{Papageorgiou-Winkert-2018}).

Let $u$ be a weak solution of problem \eqref{PD}. Let $k_*\geq 1$ be sufficiently large such that
	\begin{equation}\label{k*}
		\int_{A_{k_*}}G(x,|D u|){\rm d}x+\int_{A_{k_*}}H_1(x,\vert u\vert){\rm d}x<1,
	\end{equation}
	where $A_k:=\{x\in\Omega\,:\, u(x)>k\}$ for $k\in\mathbb{R}$.\\	
	We define
	\begin{equation}\label{Zn.def}
		Z_h:=\int_{A_{k_h}}G(x,|D u|){\rm d}x+\int_{A_{k_h} }H_1(x,(u-k_{h})){\rm d}x,
	\end{equation}
	where
	\begin{equation*}
		k_h:=k_*\left(2-\frac{1}{2^h}\right)
		\quad \text{for }h =0,1,2,\ldots.
	\end{equation*}
	Obviously, $\text{for }h =0,1,2,\ldots$, it hol{\rm d}s
	\begin{equation}\label{k_n}
		k_h \nearrow 2k_* \quad \text{and} \quad k_* \leq k_h <2k_*.
	\end{equation}
	Since $k_{h}<k_{h+1}$ and $A_{k_{h+1}}\subset A_{k_h}$ for all $h=0,1,2,\ldots$, we have
	\begin{equation}\label{Zn.decreasing}
		Z_{h+1}\leq Z_h\quad\text{for all }h =0,1,2,\ldots.
	\end{equation}
	Moreover, for $x \in A_{k_{h+1}}$,  we see that
	\begin{equation*}
		u(x)- k_h \geq u(x)\left(1-\frac{k_h}{k_{h+1}}\right) = \frac{u(x)}{2^{h+2}-1}.
	\end{equation*}
	Hence, we obtain
	\begin{equation} \label{est.u}
		u(x)\leq (2^{h+2}-1)(u(x)-k_h)\quad \text{for a.\,a.\,}x\in A_{k_{h+1}}\text{ and for all }h=0,1,2,\ldots.
	\end{equation}
	Furthermore, by Propositions \ref{zoo} and \ref{zoo*}, we have
	\begin{align}\label{|A_{k_{h+1}}|}
		\begin{split}
			|A_{k_{h+1}}|
			& \leq \int_{A_{k_{h+1}}}H_1\left (x,\frac{u-k_h}{k_{h+1}-k_h}\right )d x\\
            & \leq \max\left\lbrace \frac{1}{(k_{h+1}-k_h)^{h^-_1}},\frac{1}{(k_{h+1}-k_h)^{h^+_1}} \right\rbrace\int_{A_{k_{h}}} H_1\left (x,u-k_h\right )d x\\
             & \leq \max\left\lbrace \frac{2^{h^-_1(h+1)}}{k_*^{h^-}},\frac{2^{h^+(h+1)}}{k_*^{h^+_1}} \right\rbrace\int_{A_{k_{h}}} H_1\left (x,u-k_h\right )d x\\
             & \leq  \frac{2^{h^+_1(h+1)}}{k_*^{h^-_1}}\int_{A_{k_{h}}} H_1\left (x,u-k_h\right )d x\\
			& \leq \frac{2^{h^+_1(h+1)}}{k_*^{h^-_1}} Z_h\\
			& \leq 2^{h^+_1(h+1)} Z_h\quad \text{for all }h=0,1,2,\ldots.
		\end{split}
	\end{align}
In the remainder of the proof, we denote by \( C_i \) (for \( i = 1, 2, \ldots \)) positive constants that are independent of \( h \) and \( k_* \).\\	
    {\bf Claim 1:}
	There exist positive constants $\mu_1$ and $\mu_2$ such that
	\begin{equation*}%\label{u^p^*}
		\int_{A_{k_{h+1}}}H_1(x,(u-k_{h+1})){\rm d}x
		\leq  C_1 2^{\frac{h\left(h^+_1\right)^2}{g^-}}\left(Z_{h}^{1+\mu_1}+Z_{h}^{1+\mu_2}\right)\quad \text{for all }h=0,1,2,\ldots.
	\end{equation*}	
First, note that
	\begin{align}\label{decompose1}
		\begin{split}
			\int_{A_{k_{h+1}}}H_1(x,(u-k_{h+1})){\rm d}x
			&=\int_{\Omega}H_1(x,(u-k_{h+1})^+){\rm d}x\\
			&\leq \sum_{i=1}^m\int_{\Omega_{i}}H_1(x,(u-k_{h+1})^+){\rm d}x.
		\end{split}
	\end{align}
Let $i\in\{1,\ldots,m\}$. From \eqref{k*}, Propositions \ref{zoo}, \ref{zoo*} and \ref{embb} for $\Omega=\Omega_{i}$ we have
	\begin{align*}
		\int_{\Omega_{i}}H_1(x,(u-k_{h+1})^+){\rm d}x
		&\leq \|(u-k_{h+1})^+\|_{L^{H_1(x,t)}(\Omega_{i})}^{h^-_1}\\
		& \leq C_2\left[\|D (u-k_{h+1})^+\|_{L^{G(x,t)}(\Omega_{i})}+\|(u-k_{h+1})^+\|_{L^{G(x,t)}(\Omega_{i})}\right]^{h^-_1}.
	\end{align*}
Then, again by Propositions \ref{zoo}, \ref{zoo*} and \ref{embb} along with \eqref{k*}, this leads to
	\begin{align*}
		\int_{\Omega_{i}}H_1(x,(u-k_{h+1})^+){\rm d}x&\leq C_3\left(\int_{\Omega_{i}}G(x,|D (u-k_{h+1})^+|){\rm d}x+\int_{\Omega_{i}}G(x,(u-k_{h+1})^+)d x\right)^{\frac{h^-}{g^+}}\\
		&\leq C_3\left(\int_{A_{k_{h+1}}}G(x,|D u|)d x+\int_{A_{k_{h+1}}}H_1(x,(u-k_{h+1})){\rm d}x+|A_{k_{h+1}}|\right)^{\frac{h^-_1}{g^+}}.
	\end{align*}
From this, \eqref{Zn.def}, \eqref{Zn.decreasing} and \eqref{|A_{k_{h+1}}|}, we obtain
	\begin{align*}%\label{grad3}
		\int_{\Omega_{i}}H_1(x,(u-k_{h+1})^+){\rm d}x
		\leq C_42^{\frac{hh^+_1h^-_1}{g^+}}Z_{h}^{\frac{h^-_1}{g^+}}.
	\end{align*}
	Combining this with \eqref{decompose1}, we deduce that 
	\begin{equation*}
		\int_{\Omega}H_1(x,(u-k_{h+1})^+{\rm d}x
		\leq C_52^{\frac{h\left(h^+_1\right)^2}{g^-}}\left(Z_{h}^{1+\mu_1}+Z_{h}^{1+\mu_2}\right),
	\end{equation*}
	where
	\begin{align*}
		0<\mu_1:=\frac{h^-_1}{g^+}-1=:\mu_2.
	\end{align*}
 This proves Claim 1.\\
 {\bf Claim 2:} It hol{\rm d}s that
	\begin{equation*}
		\int_{A_{k_{h+1}}}G(x,|D u|){\rm d}x\leq  C_6 2^{h\left[\frac{\left(h^+\right)^2}{g^-}+h^+\right]}\left(Z_{h-1}^{1+\mu_1}+Z_{h-1}^{1+\mu_2}\right)\quad \text{for all }h=1,2,\ldots.
	\end{equation*}
Testing \eqref{4.9} with $\varphi =(u-k_{h+1})^{+} \in W_0^{1,G(x,t)}(\Omega)$, it yields 
	\begin{align*}
		\int_{\Omega} A(x,u,D u) \cdot D(u-k_{h+1})^{+} d x
		=\int_{\Omega }B(x,u,D u)(u-k_{h+1})^{+} {\rm d}x,
	\end{align*}
which can be written as
	\begin{equation}\label{D.var.Eq}
		\int_{A_{k_{h+1}}} A(x,u,D u) \cdot D u d x
		=\int_{A_{k_{h+1}} }B(x,u,D u)(u-k_{h+1}) d x.
	\end{equation}
We proceed to estimate the terms in \eqref{D.var.Eq}, starting with the left-hand side.

Since \( u \geq u - k_{h+1} > 0 \), and \( u > k_{h+1} \geq 1 \) on the set \( A_{k_{h+1}} \). Furthermore, the function \( t \mapsto H_1(x, t) \) is increasing for all \( x \in \Omega \). Therefore, by assumption \eqref{6.31} and Propositions \ref{zoo} and \ref{zoo*}, we obtain
\begin{align*}
\int_{A_{k_{h+1}}} A(x,u,D u)\cdot D u \, \mathrm{d}x
&\geq a_4 \int_{A_{k_{h+1}}} G(x,|D u|) \, \mathrm{d}x - a_5 \int_{A_{k_{h+1}}} H_1(x,u) \, \mathrm{d}x - \int_{A_{k_{h+1}}} a_6 \, \mathrm{d}x \\
&\geq a_4 \int_{A_{k_{h+1}}} G(x,|D u|) \, \mathrm{d}x 
- \max\{a_5, a_6\} \int_{A_{k_{h+1}}} H_1(x,u) \, \mathrm{d}x.
\end{align*}
We now estimate the right-hand side of \eqref{D.var.Eq}. Let \( \varepsilon \in (0,1] \) be chosen such that  
\[
b_1 \varepsilon^{\frac{h^+_1}{h^+_1 - 1}} \leq \frac{a_4}{4}.
\]  
 Then, employing assumption \eqref{6.51}, Young’s inequality \eqref{Yi}, the estimate \eqref{L1}, and Propositions \ref{zoo} and \ref{zoo*}, we find
\begin{align*}
\int_{A_{k_{h+1}}} B(x,u,D u)(u - k_{h+1}) \, \mathrm{d}x 
&\leq b_1 \int_{A_{k_{h+1}}} M(x,|D u|) u \, \mathrm{d}x 
+ b_2 \int_{A_{k_{h+1}}} h_1(x,u) u \, \mathrm{d}x 
+ \int_{A_{k_{h+1}}} b_3(u - k_{h+1}) \, \mathrm{d}x \\
&\leq b_1 \int_{A_{k_{h+1}}} \frac{\varepsilon}{\varepsilon} M(x,|D u|) u \, \mathrm{d}x 
+ \max\{h^+_1 b_2, b_3\} \int_{A_{k_{h+1}}} H_1(x,u) \, \mathrm{d}x \\
&\leq b_1 \int_{A_{k_{h+1}}} \widetilde{H_1}\left(x, \varepsilon M(x,|D u|) \right) \, \mathrm{d}x 
+ b_1 \int_{A_{k_{h+1}}} H_1\left(x, \frac{1}{\varepsilon} u \right) \, \mathrm{d}x \\
&\quad + \max\{h^+_1 b_2, b_3\} \int_{A_{k_{h+1}}} H_1(x,u) \, \mathrm{d}x \\
&\leq b_1 \varepsilon^{\frac{h^+_1}{h^+_1 - 1}} \int_{A_{k_{h+1}}} \widetilde{H_1}\left(x, M(x,|D u|) \right) \, \mathrm{d}x 
+ b_1 \varepsilon^{-h^+_1} \int_{A_{k_{h+1}}} H_1(x,u) \, \mathrm{d}x \\
&\quad + \max\{h^+_1 b_2, b_3\} \int_{A_{k_{h+1}}} H_1(x,u) \, \mathrm{d}x \\
&\leq b_1 \varepsilon^{\frac{h^+_1}{h^+_1 - 1}} \int_{A_{k_{h+1}}} G(x,|D u|) \, \mathrm{d}x 
+ C_7 \int_{A_{k_{h+1}}} H_1(x,u) \, \mathrm{d}x,
\end{align*}
where 
\[
C_7 := \max\left\{h^+_1 b_2, b_3, b_1 \varepsilon^{-h^+_1} \right\}.
\]
Using the choice of \( \varepsilon \), we conclude that
\[
\int_{A_{k_{h+1}}} B(x,u,D u)(u - k_{h+1}) \, \mathrm{d}x 
\leq \frac{a_4}{4} \int_{A_{k_{h+1}}} G(x,|D u|) \, \mathrm{d}x 
+ C_7 \int_{A_{k_{h+1}}} H_1(x,u) \, \mathrm{d}x.
\]
Combining the estimates above with \eqref{D.var.Eq} and then using \eqref{est.u}, we obtain
	\begin{align*}
		\int_{A_{k_{h+1}}}G(x,|D u|){\rm d}x &
		\leq  C_8\int_{A_{k_{h+1}}}H_1(x,u) {\rm d}x\\
		& \leq C_8\int_{A_{k_{h+1}}}H_1\left(x,(2^{h+2}-1)(u-k_h)\right) {\rm d}x.
	\end{align*}
	Hence,
	\begin{equation*}
		\int_{A_{k_{h+1}}}G(x,|D u|) {\rm d}x
		\leq C_92^{hh^+_1}\int_{\Omega}H_1(x,(u-k_h)^+) {\rm d}x.
	\end{equation*}
Then, Claim~2 follows from the preceding inequality together with Claim~1.\\
From Claims 1 and 2, along with \eqref{Zn.decreasing}, we conclude that
	\begin{equation}\label{Recur}
		Z_{h+1}\leq C_{10} b^h\left(Z_{h-1}^{1+\mu_1}+Z_{h-1}^{1+\mu_2}\right)\quad \text{for all }h=1,2,\ldots,
	\end{equation}
	where
	\begin{align*}
		b:=2^{\big[\frac{\left(h^+_1\right)^2}{g^-}+h^+_1\big]}>1.
	\end{align*}
	This yields to
	\begin{equation*}
		Z_{2(h+1)}\leq C_{10} b^{2h+1}\left(Z_{2h}^{1+\mu_1}+Z_{2h}^{1+\mu_2}\right)\quad \text{for all }h=0,1,2,\ldots,
	\end{equation*}
	that is,
	\begin{equation}\label{Recur1}
		\tilde{Z}_{h+1}\leq bC_{10} \tilde{b}^h\left(\tilde{Z}_h^{1+\mu_1}+\tilde{Z}_h^{1+\mu_2}\right) \quad \text{for all }h=0,1,2,\ldots,
	\end{equation}
	where $\tilde{Z}_h:=Z_{2h}$ and $\tilde{b}:=b^2$. Applying Lemma \ref{leRecur} to \eqref{Recur1} yields
	\begin{equation}\label{Recur+1}
		Z_{2h}=\tilde{Z}_h \to 0 \quad \text {as }  h\to \infty
	\end{equation}
	provided that
	\begin{equation}\label{Z_0}
		\tilde{Z}_{0}\leq \min\left\{(2bC_{10})^{-\frac{1}{\mu_1}}\ \tilde{b}^{-\frac{1}{\mu_1^{2}}},\left(2bC_{10}\right)^{-\frac{1}{\mu_2}}\ \tilde{b}^{-\frac{1}{\mu_1\mu_2}-\frac{\mu_2-\mu_1}{\mu_2^{2}}}\right\}.
	\end{equation}
	Again, from \eqref{Recur} we obtain
	\begin{equation*}
		Z_{2(h+1)+1}\leq C_{10} b^{2(h+1)}\left(Z_{2h+1}^{1+\mu_1}+Z_{2h+1}^{1+\mu_2}\right)\quad \text{for all } h=0,1,2,\ldots,
	\end{equation*}
	which can be written as
	\begin{equation}\label{Recur2}
		\hat{Z}_{h+1}\leq \tilde{b}C_{10} \tilde{b}^h\left(\hat{Z}_h^{1+\mu_1}+\hat{Z}_h^{1+\mu_2}\right) \quad \text{for all }h=1,2,\ldots,
	\end{equation}
	where $\hat{Z}_h:=Z_{2h+1}$. From Lemma \ref{leRecur} applied to \eqref{Recur2} it follows that
	\begin{equation}\label{Recur+2}
		Z_{2h+1}=\hat{Z}_h \to 0 \quad \text{as } n\to \infty
	\end{equation}
	provided that
	\begin{equation}\label{Z_0'}
		\hat{Z}_{0}\leq \min\left\{(2\tilde{b}C_{10})^{-\frac{1}{\mu_1}}\ \tilde{b}^{-\frac{1}{\mu_1^{2}}},\left(2\tilde{b}C_{10}\right)^{-\frac{1}{\mu_2}}\ \tilde{b}^{-\frac{1}{\mu_1\mu_2}-\frac{\mu_2-\mu_1}{\mu_2^{2}}}\right\}.
	\end{equation}
	Note that
	\begin{align*}
		\hat{Z}_{0}=Z_1\leq Z_0=\tilde{Z}_0\leq \int_{A_{k_*}}G(x,|D u|) d x+\int_{A_{k_*} }H(x,u){\rm d}x.
	\end{align*}
	Therefore, by choosing $k_*>1$ sufficiently large we have
	\begin{align*}
		\int_{A_{k_*}}G(x,|D u|) d x+\int_{A_{k_*} }H_1(x,u){\rm d}x
		&\leq \min\left\{1,(2\tilde{b}C_{10})^{-\frac{1}{\mu_1}}\ \tilde{b}^{-\frac{1}{\mu_1^{2}}},\left(2\tilde{b}C_{10}\right)^{-\frac{1}{\mu_2}}\ \tilde{b}^{-\frac{1}{\mu_1\mu_2}-\frac{\mu_2-\mu_1}{\mu_2^{2}}}\right\}.
	\end{align*}
	Hence, \eqref{k*}, \eqref{Z_0} and \eqref{Z_0'} are fulfilled and we obtain \eqref{Recur+1} and \eqref{Recur+2}. This means that
	\begin{align*}
		Z_h=\int_{A_{k_h}}G(x,|D u|) d x+\int_{A_{k_h} }H_1(x,(u-k_h)) d x\to 0 \quad \text{as } h\to\infty.
	\end{align*}
	In particular, we have
	\begin{align*}
		\int_{\Omega}H_1(x,(u-2k_*)^+)d x=0.
	\end{align*}
	Consequently, by Proposition \ref{zoo}, $(u-2k_*)^{+}=0$ a.\,e.\,in $\Omega$ and so
	\begin{align*}
		\underset{\Omega}{\mathop{\rm ess\,sup}}\ u \leq 2k_*.
	\end{align*}
Replacing $u$ by $-u$ in the arguments above, we also obtain
	\begin{align*}
		\underset{\Omega}{\mathop{\rm ess\,sup}}\ (-u) \leq 2k_*.
	\end{align*}
Hence, $\|u\|_\infty\leq 2k_*$. This finishes the proof.
\end{proof}
\subsection{Boundedness of Weak Solutions to Problems with Neumann Boundary Condition}\label{subsection3}

In this subsection, we address the boundedness of weak solutions to the Neumann problem~\eqref{PN}, as formulated in Theorem~\ref{Theorem 6.1w}.

\begin{proof}[\textbf{Proof of Theorem \ref{Theorem 6.1w}}]
	As before, since $\overline{\Omega}$ is compact, for any \( R > 0 \), there exists a finite open cover \( \{B_i(R)\}_{i=1}^m \) of balls \( B_i := B_i(R) \), each of radius \( R \), such that 
\[
\overline{\Omega} \subset \bigcup_{i=1}^{m} B_i,
\]
and each intersection \( \Omega_i := B_i \cap \Omega \) (for \( i = 1, \ldots, m \)) is also a Lipschitz domain.\\ 
We denote by \( I \subset \{1, \ldots, m\} \) the set of indices for which \( B_i \cap \partial\Omega \neq \emptyset \), and we choose \( R > 0 \) sufficiently small.\\
Let \( u \) be a weak solution to problem~\eqref{PN}, and let \( k_* \geq 1 \) be sufficiently large such that 
	\begin{equation}\label{N.k*11}
		\int_{A_{k_*}}G(x,|D u|){\rm d}x+\int_{A_{k_*}}H_1(x,u){\rm d}x+\int_{\Gamma_{k_*}}H_2(x,u){\rm d}\sigma<1,
	\end{equation}
	where
	\begin{align*}
		A_k:=\{x\in\Omega \, : \, u(x)>k\} \quad\text{and}\quad \Gamma_k:=\{x\in\partial\Omega \,:\, u(x)>k\} \quad\text{for }k\in\mathbb{R}.
	\end{align*}
	We define
	\begin{equation}\label{N.Zn.def11}
		Z_h:=\int_{A_{k_h}}G(x,|D u|){\rm d}x+\int_{A_{k_h} }H_1\left(x,(u-k_h)\right){\rm d}x+\int_{\Gamma_{k_n}}H_2\left(x,(u-k_h)\right){\rm d}\sigma,
	\end{equation}
	where $\{k_{h}\}_{h\geq 0}$ is a sequence defined as in the proof of Theorem \ref{Theorem 4.1w}. Since $k_{h}<k_{h+1}$, we have $A_{k_{h+1}}\subset A_{k_{h}}$ and $\Gamma_{k_{h+1}}\subset \Gamma_{k_{h}}$ for all $h=0,1,2,\ldots$. Hence, it hol{\rm d}s
	\begin{equation}\label{N.Zn.decreasing11}
	Z_{h+1}\leq Z_h\quad\text{for all }h=0,1,2,\ldots.
	\end{equation}
	Moreover, we have the following estimates (see the proof of Theorem \ref{Theorem 4.1w})
	\begin{align}
		u(x)&\leq \left(2^{h+2}-1\right)(u(x)-k_{h})\quad \text{for a.\,a.\,}x\in A_{k_{h+1}} \text{ and for all } h=0,1,2,\ldots,\label{N.est.u111}\\
		u(x)&\leq \left(2^{h+2}-1\right)(u(x)-k_{h})\quad \text{for a.\,a.\,}x\in \Gamma_{k_{h+1}}\text{ and for all } h=0,1,2,\ldots. \label{N.est.u211}
	\end{align}
	As shown in \eqref{|A_{k_{h+1}}|} the following estimate hol{\rm d}s true
	\begin{equation} \label{N.|A_k|1}
	|A_{k_{h+1}}|  \leq \frac{2^{h_1^+(h+1)}}{k_*^{h^-_1}} Z_h\leq 2^{h^+_1(h+1)} Z_h\quad \text{for all }h=0,1,2,\ldots.
	\end{equation}
	In the following, we will denote by $C_i$ ($i=1,2,3,\ldots$) positive constants independent of $h$ and $k_*$.\\
	{\bf Claim 1:} It hol{\rm d}s that
	\begin{equation*}%\label{N.u^p^*}
		\int_{A_{k_{h+1}}}H_1\left(x,(u-k_{h+1})\right){\rm d}x\leq  C_1 2^{\frac{h\left(h^+_1\right)^2}{g^-}}\left(Z_{h}^{1+\nu_1}+Z_{h}^{1+\nu_2}\right)\quad \text{for all } h=0,1,2,\ldots,
	\end{equation*}
	where
	\begin{align*}
		0<\nu_1:=\frac{h_1^-}{g^+}-1=:\nu_2.
	\end{align*}
	The proof of Claim 1 is the same as Claim 1 in Theorem \ref{Theorem 4.1w}.\\	
	{\bf Claim 2:} There exist positive constants $\nu_3$ and $\nu_4$ such that
	\begin{equation*}%\label{N.u^p_*.Ga}
		\int_{\Gamma_{k_{h+1}}}H_2\left(x,(u-k_{h+1})\right){\rm d} \sigma\leq  C_2 2^{\frac{h\left(h^+_2\right)^2}{g^-}}\left(Z_{h}^{1+\nu_3}+Z_{h}^{1+\nu_4}\right)\quad \text{for all } h=0,1,2,\ldots.
	\end{equation*}
	First, we see that
	\begin{align}\label{N.deco.Bdr11}
		\begin{split}
			\int_{\Gamma_{k_{h+1}}}H_2(x,(u-k_{h+1})){\rm d} \sigma
			&=\int_{\partial\Omega}H_2(x,(u-k_{h+1})^+){\rm d} \sigma\\
			& \leq \sum_{i=1}^m\int_{\partial\Omega\cap \partial \Omega_{i}}H_2(x,(u-k_{h+1})^+){\rm d} \sigma.
		\end{split}
	\end{align}
	Let \( i \in \{1, \ldots, m\} \). From Propositions~\ref{zoo} and \ref{zoo*}, inequality \eqref{N.k*11}, and Proposition~\ref{embb}, for \( \Omega = \Omega_i \), we have
	\begin{align*}
		& \int_{\partial\Omega\cap \partial \Omega_{i}}H_2(x,(u-k_{h+1})^+){\rm d} \sigma\\
		& \leq \|(u-k_{h+1})^+\|_{L^{H_2(x,t)}(\partial\Omega\cap \partial \Omega_{i})}^{h_2^-}\\
        & \leq \|(u-k_{h+1})^+\|_{L^{H_2(x,t)}( \partial \Omega_{i})}^{h_2^-}\\
		&\leq C_3\left[\|D (u-k_{n+1})^+\|_{L^{G(x,t)}(\Omega_{i})}+\|(u-k_{h+1})^+\|_{L^{G(x,t)}(\Omega_{i})}\right]^{h_2^-}.
	\end{align*}
	Then, by Propositions \ref{zoo}, \ref{zoo*} and inequality\eqref{N.k*11} it follows
	\begin{align*}
		& \int_{\partial\Omega\cap \partial \Omega_{i}}H_2(x,(u-k_{h+1})^+){\rm d} \sigma\\
		&\leq C_4\left(\int_{\Omega_{i}}G(x,|\nabla (u-k_{h+1})^+|){\rm d}x+\int_{\Omega_{i}}G(x,(u-k_{h+1})^+){\rm d}x\right)^{\frac{h_2^-}{g^+}}\\
	\notag&\leq C_4\left(\int_{A_{k_{h+1}}}G(x,|D u|){\rm d}x+\int_{A_{k_{h+1}}}H_2(x,(u-k_{h+1})){\rm d}x+|A_{k_{h+1}}|\right)^{\frac{h_2^-}{g^+}}.
	\end{align*}
	From this combined with \eqref{N.Zn.def11}, \eqref{N.Zn.decreasing11} as well as \eqref{N.|A_k|1} we obtain
	\begin{align}\label{N.grad311}
		\int_{\partial\Omega\cap \partial \Omega_{i}}H_2(x,(u-k_{h+1})^+){\rm d} \sigma&\leq C_52^{\frac{hh_2^+h_2^-}{g^+}}Z_{h}^{\frac{h_2^-}{g^+}}.
	\end{align}
Consequently, by \eqref{N.deco.Bdr11} and \eqref{N.grad311}, we conclude that
	\begin{equation*}
		\int_{\partial\Omega}H_2((u-k_{h+1})^+){\rm d}\sigma\leq C_62^{\frac{h\left(h_2^+\right)^2}{g^-}}\left(Z_{h}^{1+\nu_3}+Z_{h}^{1+\nu_4}\right),
	\end{equation*}
	where
	\begin{align*}
		0<\nu_3:=\frac{h_2^-}{g^+}-1=:\nu_4.
	\end{align*}
 This proves Claim 2.\\	
	{\bf Claim 3:} It hol{\rm d}s that
	\begin{equation*}
		\int_{A_{k_{h+1}}}G(x,|D u|){\rm d}x\leq  C_7 2^{h\left[\frac{\left(h_1^++h_2^+\right)^2}{g^-}+g_*^+\right]}\left(Z_{h-1}^{1+\mu_1}+Z_{h-1}^{1+\mu_2}\right)\quad \text{for all }h=1,2,3,\ldots,
	\end{equation*}
	where
	\begin{align*}
		0<\mu_1:=\nu_1=\nu_2=\nu_3=\nu_4=:\mu_2.
	\end{align*}
	Taking $\varphi =(u-k_{h+1})^{+} \in W^{1,G(x,t)}(\Omega)$ as test function in \eqref{FNV} gives
	\begin{align*}
		\int_{\Omega}
		A(x,u,D u) \cdot \nabla
		\varphi {\rm d}x =\int_{\Omega }B(x,u,D u)\varphi {\rm d}x+\int_{\partial\Omega}C(x,u)\varphi {\rm d}\sigma,
	\end{align*}
	which can be written as
	\begin{align}\label{estimate_311}
		\begin{split}
			&\int_{A_{k_{h+1}}} A(x,u,D u) \cdot D u {\rm d}x\\ 
			&=\int_{A_{k_{h+1}} }B(x,u,D u)(u-k_{h+1}) {\rm d}x +\int_{\Gamma_{k_{h+1}} }C(x,u)(u-k_{h+1}) {\rm d} \sigma.
		\end{split}
	\end{align}
	As done in the proof of Theorem \ref{Theorem 4.1w}, we apply assumptions \eqref{6.31}, \eqref{6.51} and \eqref{6.61}, as well as Young's inequality \eqref{Yi}, \eqref{L1} and Propositions \ref{zoo} and \ref{zoo*}, and we take $\varepsilon \in (0,1]$ such that
    $$b_1\varepsilon^{\frac{h_1^+}{h_1^+-1}}\leq\frac{a_4}{4},$$
  we have the estimates
	\begin{align}\label{estimate_411}
		\begin{split}
			&\int_{A_{k_{h+1}}} A(x,u,D u)\cdot D u {\rm d}x\\
			&\geq a_4\int_{A_{k_{h+1}} }G(x,|D u|) {\rm d}x-\max\{a_5,a_6\}\int_{A_{k_{h+1}} }H_1(x,u){\rm d}x,
		\end{split}
	\end{align}
	and
	\begin{align}\label{estimate_511}
		\begin{split}
			&\int_{A_{k_{h+1}}} B(x,u,D u)(u-k_{h+1}){\rm d}x\\
            & \leq b_1\int_{A_{k_{h+1}}}  M(x,|D u|)(u-k_{h+1}) {\rm d}x + b_2 \int_{A_{k_{h+1}}}  h_1(x, u) (u-k_{h+1}){\rm d}x+ b_3\int_{A_{k_{h+1}}} (u-k_{h+1}) d x\\
            & \leq b_1\int_{A_{k_{h+1}}}  M(x,|D u|)u {\rm d}x + b_2 \int_{A_{k_{h+1}}}  h_1(x, u) u{\rm d}x+b_3\int_{A_{k_{h+1}}}  u(x) d x\\
             & \leq b_1\int_{A_{k_{h+1}}}  M(x,|D u|)u {\rm d}x + b_2 h_1^+\int_{A_{k_{h+1}}}  H_1(x, u) {\rm d}x+b_3\int_{A_{k_{h+1}}}  H_1(x,u) d x\\
			& \leq b_1\int_{A_{k_{h+1}} }\frac{\varepsilon}{\varepsilon}M(x,|D u|)u{\rm d}x+\max\{h_1^+b_2,b_3\}\int_{A_{k_{h+1}}}H_1(x,u){\rm d}x \\
        & \leq b_1\int_{A_{k_{h+1}} }\widetilde{H_1}\left(x,\varepsilon M(x,|D u|)\right){\rm d}x + b_1\int_{A_{k_{h+1}} }H_1\left(x,\frac{1}{\varepsilon}u\right){\rm d}x\\
        &+\max\{h_1^+b_2,b_3\}\int_{A_{k_{h+1}}}H_1(x,u){\rm d}x \\
        & \leq b_1\varepsilon^{\frac{h_1^+}{h_1^+-1}}\int_{A_{k_{h+1}} }\widetilde{H_1}\left(x, M(x,|D u|)\right){\rm d}x + b_1\varepsilon^{-h_1^+}\int_{A_{k_{h+1}} }H_1\left(x,u\right){\rm d}x\\
        &+\max\{h_1^+b_2,b_3\}\int_{A_{k_{h+1}}}H_1(x,u){\rm d}x \\
        & \leq b_1\varepsilon^{\frac{h_1^+}{h_1^+-1}}\int_{A_{k_{h+1}} }G(x,|D u|){\rm d}x +\max\{h_1^+b_2,b_3,b_1\varepsilon^{-h_1^+}\}\int_{A_{k_{h+1}}}H_1(x,u){\rm d}x\\ 
	& = \frac{a_4}{4}\int_{A_{k_{h+1}} }G(x,|D u|){\rm d}x +C_7\int_{A_{k_{h+1}}}H_1(x,u){\rm d}x.
		\end{split}
	\end{align}
 Note that $u\geq u-k_{h+1} >0$ and $u> k_{h+1} \geq 1$ on $\Gamma_{k_{h+1}}$. So, applying assumption \eqref{6.61}, it yields that
	\begin{align}\label{estimate_611}
		\begin{split}
			\int_{\Gamma_{k_{h+1}} }C(x,u)(u-k_{h+1}) {\rm d}\sigma
			& \leq c_1 \int_{\Gamma_{k_{h+1}} }h_2(x,u) (u-k_{h+1}){\rm d}\sigma+c_2 \int_{\Gamma_{k_{h+1}} }(u-k_{h+1}) {\rm d}\sigma\\
            & \leq c_1 \int_{\Gamma_{k_{h+1}} }h_2(x,u) u{\rm d}\sigma+c_2 \int_{\Gamma_{k_{h+1}} }u {\rm d}\sigma\\
            & \leq c_1 h_2^+ \int_{\Gamma_{k_{h+1}} }H_2(x,u) {\rm d}\sigma+c_2 \int_{\Gamma_{k_{h+1}} }H_2(x,u) {\rm d}\sigma\\
			& \leq (c_1h_2^++c_2)\int_{\Gamma_{k_{h+1}} }H_2(x,u) {\rm d}\sigma.
		\end{split}
	\end{align}
	Combining \eqref{estimate_311}, \eqref{estimate_411}, \eqref{estimate_511} and \eqref{estimate_611}, and using \eqref{N.est.u111} as well as \eqref{N.est.u211}, we obtain
	\begin{align*}
		&\int_{A_{k_{h+1}}}G(x,|D u|){\rm d}x\\
		& \leq  C_8\int_{A_{k_{h+1}}}H_1(x,u){\rm d}x+C_8\int_{\Gamma_{k_{h+1}}}H_2(x,u){\rm d} \sigma\\
		&\leq C_8\int_{A_{k_{h+1}}}H_1\left(x,\left(2^{h+2}-1\right)(u-k_{h})\right){\rm d}x\\
		&\qquad +C_8\int_{\Gamma_{k_{h+1}}}H_2\left(x,\left(2^{h+2}-1\right)(u-k_{h})\right){\rm d} \sigma.
	\end{align*}
	Hence,
	\begin{equation*}
		\int_{A_{k_{h+1}}}G(x,|D u|){\rm d}x
		\leq C_92^{h(h_1^++h_2^+)}\left[\int_{A_{k_{h}}}H_1(x,(u-k_{h})){\rm d}x+\int_{\Gamma_{k_h}}H_2(x,(u-k_{h}) ){\rm d}\sigma\right].
	\end{equation*}
	Then, Claim 3 follows from the last inequality and Claims 1 and 2.\\
	From Claims 1, 2, and 3, along with \eqref{N.Zn.decreasing11} one has
	\begin{equation}\label{N.Recur1}
		Z_{h+1}\leq C_{10} b^h\left(Z_{h-1}^{1+\mu_1}+Z_{h-1}^{1+\mu_2}\right)\quad \text{for all } h=1,2,3,\ldots,
	\end{equation}
	where
	\begin{align*}
		b:=2^{\left[\frac{\left(h_1^++h_2^+\right)^2}{g^-}+g_*^+\right]}>1.
	\end{align*}
	Repeating the same arguments used in the proof of Theorem~\ref{Theorem 4.1w}, by choosing \( k_* > 1 \) sufficiently large such that
	\begin{align*}
		&\int_{A_{k_*}}G(x,|D u|){\rm d}x+\int_{A_{k_*} }H_1(x,u){\rm d}x+\int_{\Gamma_{k_*} }H_2(x,u){\rm d} \sigma\\
		&\leq \min\left\{1,(2\tilde{b}C_{10})^{-\frac{1}{\mu_1}}\ \tilde{b}^{-\frac{1}{\mu_1^{2}}},\left(2\tilde{b}C_{10}\right)^{-\frac{1}{\mu_2}}\ \tilde{b}^{-\frac{1}{\mu_1\mu_2}-\frac{\mu_2-\mu_1}{\mu_2^{2}}}\right\},
	\end{align*}
	where $\tilde{b}:=b^2$, we deduce from \eqref{N.Recur1} that
	\begin{align*}
		Z_h=\int_{A_{k_{h}}}G(x,|D u|){\rm d}x+\int_{A_{k_{h}} }H_1(x,(u-k_{h})){\rm d}x+\int_{\Gamma_{k_{h}} }H_2(x,(u-k_{h})){\rm d} \sigma\to 0
	\end{align*}
	as $h\to \infty$. This implies that
	\begin{align*}
		\int_{\Omega}H_1(x,(u-2k_*)^+) {\rm d}x+\int_{\partial\Omega}H_2(x,(u-2k_*)^+) {\rm d} \sigma=0.
	\end{align*}
	Therefore, $(u-2k_*)^{+}=0$ a.\,e.\,in $\Omega$ and $(u-2k_*)^{+}=0$ a.\,e.\,on $\partial\Omega$. This means that
	\begin{align*}
		\underset{\Omega}{\mathop{\rm ess\,sup}}\ u +\underset{\partial\Omega}{\mathop{\rm ess\,sup}}\ u\leq 4k_*.
	\end{align*}
By replacing \( u \) with \( -u \) in the arguments above, we can show in the same way that
\[
\underset{\Omega}{\mathop{\rm ess\,sup}} (-u) + \underset{\partial\Omega}{\mathop{\rm ess\,sup}} (-u) \leq 4k_*.
\]
Hence,
\[
\|u\|_{L^\infty(\Omega)} + \|u\|_{L^\infty(\partial\Omega)} \leq 4k_*.
\]
The proof is complete.
\end{proof}

%----------------------------------------------------

%%***********************************************************************************************************************************

%***********************************************************************************************************************************
%------------------
\subsection{Hölder Continuity of Weak Solutions to Problems with Dirichlet or Neumann Boundary Conditions}
We now turn to the Hölder continuity of weak solutions to equation~\eqref{P}. Since the boundedness condition~\eqref{M} required in Theorem~\ref{Theorem 4.2} is ensured by Theorems~\ref{Theorem 4.1w} and~\ref{Theorem 6.1w}, we are in a position to establish the Hölder regularity of weak solutions to Problem~\eqref{P} under both Dirichlet~\eqref{PD} and Neumann~\eqref{PN} boundary conditions. This result is summarized in Theorems~\ref{Theorem 4.2}–\ref{Theorem 4.4}.
\begin{proof}[\textbf{Proof of Theorem \ref{Theorem 4.2}}]
Let \(\bar{B}_t \subset B_s \subset \Omega\) and select a \(C^\infty\)-function \(\xi\) satisfying
\begin{equation}\label{55eq}
0 \leq \xi \leq 1, \quad \text{supp } \xi \subset B_s, \quad \xi \equiv 1 \text{ on } B_t, \quad |D\xi| \leq \frac{2}{s - t}.
\end{equation}
Define
\begin{equation}\label{4.29}
\delta = \min \left\{ \frac{a_4}{4b_3M}, 2 \right\}
\end{equation}
and set \(v := \xi^{g^+} (u - k)^+ = \xi^{g^+} \max \{u - k, 0\}\), where
\begin{equation}\label{4.30}
k \geq \max_{B_s} u(x) - \delta M.
\end{equation}
Clearly, \(v \in W_0^{1,G(x,t)}(\Omega)\). Substituting into equation \eqref{4.9} yields
\begin{align}\label{eq55}
      \int_{A_{k,s}} \xi^{g^+} A(x, u, Du) \cdot Du  \mathrm{d}x &+ \int_{A_{k,s}} g^+\xi^{g^+-1} (u - k) A(x, u, Du) \cdot D\xi  \mathrm{d}x \\
      &= \int_{A_{k,s}} B(x, u, Du) \xi^{g^+} (u - k)  \mathrm{d}x.    
\end{align}
In light of conditions \eqref{6.31}--\eqref{6.41} and \eqref{6.51}, we derive the inequality
\begin{align}\label{4.31}
\begin{split}
a_4 \int_{A_{k,s}} G(x,|Du|) \xi^{g^+}  \mathrm{d}x &\leq a_5 \int_{A_{k,s}} H_1(x,|u|) \xi^{g^+}  \mathrm{d}x + a_6 \int_{A_{k,s}} \xi^{g^+}  \mathrm{d}x \\
&\quad + a_1 g^+ \int_{A_{k,s}} g(x,|Du|) \xi^{g^+-1} |D\xi|(u - k)  \mathrm{d}x \\
&\quad + a_2 g^+\int_{A_{k,s}} P(x,|u|) \xi^{g^+-1} |D\xi|(u - k)  \mathrm{d}x \\
&\quad + a_3g^+ \int_{A_{k,s}} |D\xi| \xi^{g^+-1}(u - k)  \mathrm{d}x \\
&\quad + b_1 \int_{A_{k,s}}M(x,|Du|) \xi^{g^+} (u - k)  \mathrm{d}x \\
&\quad + b_2 \int_{A_{k,s}}h_1(x,|u|) \xi^{g^+} (u - k)  \mathrm{d}x \\
&\quad + b_3 \int_{A_{k,s}} \xi^{g^+} (u - k)  \mathrm{d}x.
\end{split}
\end{align}
We proceed to estimate each term on the right-hand side of \eqref{4.31}.

 Employing \eqref{L1} and Young's inequality \eqref{Yi}, select \(\varepsilon_1 \in (0,1)\) such that
\[
a_1 (g^+)^2 \varepsilon_1^{\frac{g^+}{g^+-1}} \leq \frac{a_4}{4},
\]
 gives that
\begin{align}\label{4.32}
\begin{split}
&a_1 g^+ \int_{A_{k,s}} g(x,|Du|) \xi^{g^+-1} |D\xi|(u - k)  \mathrm{d}x \\
&\quad \leq a_1 g^+ \int_{A_{k,s}} \widetilde{G}\left(x, g(x,|Du|) \xi^{g^+-1}\varepsilon_1\right)  \mathrm{d}x + a_1 g^+ \int_{A_{k,s}} G\left(x, \frac{|D\xi|(u - k)}{\varepsilon_1}\right)  \mathrm{d}x \\
&\quad \leq a_1 g^+ \varepsilon_1^{\frac{g^+}{g^+-1}} \int_{A_{k,s}} \widetilde{G}\left(x, g(x,|Du|) \right) \xi^{g^+}  \mathrm{d}x + a_1 g^+ \varepsilon_1^{-g^+} \int_{A_{k,s}} G\left(x, \frac{2(u - k)}{s-t}\right)  \mathrm{d}x \\
&\quad \leq a_1 (g^+)^2 \varepsilon_1^{\frac{g^+}{g^+-1}} \int_{A_{k,s}} G(x,|Du|)  \xi^{g^+}  \mathrm{d}x + a_1 g^+ \varepsilon_1^{-g^+} 2^{g^+} \int_{A_{k,s}} G\left(x, \frac{u - k}{s-t}\right)  \mathrm{d}x \\
&\quad \leq \frac{a_4}{4} \int_{A_{k,s}} G(x,|Du|) \xi^{g^+}  \mathrm{d}x + d_1 \int_{A_{k,s}} G\left(x, \frac{u - k}{s - t} \right)  \mathrm{d}x,
\end{split}
\end{align}
with \(d_1 = d_1(a_1, a_2, a_3, g^-, g^+) > 0\).

 An application of Young's inequality \eqref{Yi} and Proposition \ref{zoo}, with \(\varepsilon_2 \in (0,1)\) chosen so that
\[
a_3g^+ 2^{g^+} \varepsilon_2^{g^-} \leq 1,
\]
yields that
\begin{align}\label{4.33}
\begin{split}
&a_3g^+ \int_{A_{k,s}} |D\xi|(u - k) \xi^{g^+-1}  \mathrm{d}x \leq a_3g^+ \int_{A_{k,s}} |D\xi|(u - k)  \mathrm{d}x \\
&\quad \leq a_3g^+ \int_{A_{k,s}} G\left(x, \varepsilon_2 |D\xi| (u - k)\right)  \mathrm{d}x + a_3g^+ \int_{A_{k,s}} \widetilde{G}\left(x, \frac{1}{\varepsilon_2} \right)  \mathrm{d}x \\
&\quad \leq a_3g^+\varepsilon_2^{g^-} 2^{g^+} \int_{A_{k,s}} G\left(x, \frac{u - k}{s-t}\right)  \mathrm{d}x + a_3g^+ \int_{A_{k,s}} \widetilde{G}\left(x, \frac{1}{\varepsilon_2} \right)  \mathrm{d}x \\
&\quad \leq \int_{A_{k,s}} G\left(x, \frac{u - k}{s - t} \right)  \mathrm{d}x + c_1 |A_{k,s}|,
\end{split}
\end{align}
where \(c_1 = c_1(a_3, g^+) > 0\).

 From \eqref{H111}, \eqref{M}, and Proposition \ref{zoo}, we find
\begin{align}\label{11eq}
    \begin{split}
        a_5 \int_{A_{k,s}} H_1(x,|u|) \xi^{g^+}  \mathrm{d}x &\leq a_5 \int_{A_{k,s}} H_1(x, M) \xi^{g^+}  \mathrm{d}x \\
        &\leq a_5 \int_{A_{k,s}} F_1 M^{h_1^+} \xi^{g^+}  \mathrm{d}x \leq c_2 |A_{k,s}|,
    \end{split}
\end{align}
with \(c_2 = c_2(a_5, F_1, h_1^+, M) > 0\).

 Invoking Young's inequality \eqref{Yi}, Proposition \ref{zoo}, inequalities \eqref{6.81}, \eqref{11eq}, and selecting \(\varepsilon_3 \in (0,1)\) such that
\[
a_2g^+ 2^{g^+} \varepsilon_3^{-g^+} \leq 1,
\]
we obtain
\begin{align}\label{22eq}
    \begin{split}
        &a_2 g^+\int_{A_{k,s}} P(x,|u|) \xi^{g^+-1} |D\xi|(u - k)  \mathrm{d}x \\
        &\quad \leq a_2 g^+ \int_{A_{k,s}} \widetilde{G}\left(x, P(x,|u|) \xi^{g^+-1}\varepsilon_3\right)  \mathrm{d}x + a_2 g^+ \int_{A_{k,s}} G\left(x, \frac{|D\xi|(u - k)}{\varepsilon_3}\right)  \mathrm{d}x \\
        &\quad \leq a_2 g^+ \varepsilon_3^{\frac{g^+}{g^+-1}} \int_{A_{k,s}} \widetilde{G}\left(x, P(x,|u|) \right) \xi^{g^+}  \mathrm{d}x + a_2 g^+ \varepsilon_3^{-g^+} \int_{A_{k,s}} G\left(x, \frac{2(u - k)}{s-t}\right)  \mathrm{d}x \\
        &\quad \leq a_2 g^+ \varepsilon_3^{\frac{g^+}{g^+-1}} \int_{A_{k,s}} H_1(x,|u|)  \xi^{g^+}  \mathrm{d}x + a_2 g^+ \varepsilon_3^{-g^+} 2^{g^+} \int_{A_{k,s}} G\left(x, \frac{u - k}{s-t}\right)  \mathrm{d}x \\
        &\quad \leq c_3 |A_{k,s}| + d_2 \int_{A_{k,s}} G\left(x, \frac{u - k}{s - t} \right)  \mathrm{d}x,
    \end{split}
\end{align}
where \(c_3 = c_3(a_2, F_1, h_1^+, M, g^+) > 0\) and \(d_2 = d_2(a_2, g^+) > 0\).

 In light of \eqref{6.71}, \eqref{H111}, \eqref{M}, Proposition \ref{zoo}, and Young's inequality \eqref{Yi}, choose \(\varepsilon_4 \in (0,1)\) such that
\[
b_1 \varepsilon_4^{\frac{h^+_1}{h^+_1-1}} \leq \frac{a_4}{4},
\]
we see that
\begin{align}\label{33eq}
    \begin{split}
        &b_1 \int_{A_{k,s}} M(x,|Du|) \xi^{g^+} (u - k)  \mathrm{d}x \\
        &\quad \leq b_1 \int_{A_{k,s}} \widetilde{H_1}\left(x, \varepsilon_4 M(x,|Du|) \xi^{g^+}\right)  \mathrm{d}x + b_1 \int_{A_{k,s}} H_1\left(x, \frac{u - k}{\varepsilon_4}\right)  \mathrm{d}x \\
        &\quad \leq b_1 \varepsilon_4^{\frac{h^+_1}{h^+_1-1}} \int_{A_{k,s}} \widetilde{H_1}\left(x, M(x,|Du|) \right) \xi^{g^+\frac{h^+_1}{h^+_1-1}}  \mathrm{d}x + b_1 \int_{A_{k,s}} H_1\left(x, \frac{\delta M}{\varepsilon_4}\right)  \mathrm{d}x \\
        &\quad \leq b_1 \varepsilon_4^{\frac{h^+_1}{h^+_1-1}} \int_{A_{k,s}} G(x,|Du|) \xi^{g^+}  \mathrm{d}x + b_1 (\delta M)^{h_1^+} \varepsilon_4^{-h^+_1} F_1 |A_{k,s}| \\
        &\quad \leq \frac{a_4}{4} \int_{A_{k,s}} G(x,|Du|) \xi^{g^+}  \mathrm{d}x + c_4 |A_{k,s}|,
    \end{split}
\end{align}
with \(c_4 = c_4(b_1, M, \delta, F_1, h_1^+) > 0\).

 From \eqref{L1}, \eqref{H111}, \eqref{M}, Proposition \ref{zoo}, and Young's inequality \eqref{Yi}, we get
\begin{align}\label{44eq}
    \begin{split}
        &b_2 \int_{A_{k,s}} h_1(x,|u|) \xi^{g^+} (u - k)  \mathrm{d}x \\
        &\quad \leq b_2 \int_{A_{k,s}} \widetilde{H_1}\left(x, h_1(x,|u|)\right)  \mathrm{d}x + b_2 \int_{A_{k,s}} H_1(x, (u - k))  \mathrm{d}x \\
        &\quad \leq b_2 h^+_1 \int_{A_{k,s}} H_1\left(x, |u|\right)  \mathrm{d}x + b_2 \int_{A_{k,s}} H_1(x, \delta M)  \mathrm{d}x \\
        &\quad \leq b_2 h^+_1 F_1 M^{h_1^+} |A_{k,s}| + b_2 F_1 (\delta M)^{h^+_1} |A_{k,s}| \\
        &\quad \leq \left(b_2 F_1 M^{h_1^+}\right) \left(\delta ^{h^+_1} + h_1^+ \right) |A_{k,s}| \leq c_5 |A_{k,s}|,
    \end{split}
\end{align}
where \(c_5 = c_5(b_2, M, \delta, F_1, h_1^+) > 0\).

 The remaining terms are bounded directly. From \eqref{4.29} and \eqref{4.30}, we have
\begin{align}
    a_6 \int_{A_{k,s}} \xi^{g^+}  \mathrm{d}x &\leq a_6 |A_{k,s}|, \\
    \int_{A_{k,s}} \xi^{g^+} (u - k)  \mathrm{d}x &\leq \int_{A_{k,s}} (u - k)  \mathrm{d}x \leq (\max_{B_s} u(x) - k) |A_{k,s}| \leq \delta M |A_{k,s}| \leq \frac{a_4}{4b_3} |A_{k,s}|. \label{4.35}
\end{align}

Combining estimates \eqref{4.31}--\eqref{4.35}, we conclude that
\[
\int_{A_{k,s}} G(x,|Du|) \xi^{g^+}  \mathrm{d}x \leq \gamma \int_{A_{k,s}} G\left(x, \frac{u - k}{s - t} \right)  \mathrm{d}x + \gamma_1 |A_{k,s}|,
\]
and consequently,
\begin{equation}\label{4.36}
\int_{A_{k,t}} G(x,|Du|)  \mathrm{d}x \leq \gamma \int_{A_{k,s}} G\left(x, \frac{u - k}{s - t} \right)  \mathrm{d}x + \gamma_1 |A_{k,s}|,
\end{equation}
for \(\bar{B}_t \subset B_s \subset \Omega\) and \(k\) satisfying \eqref{4.30}, where \[\gamma := \gamma(a_2, a_3, a_4, g^-, g^+)\quad \text{and} \quad \gamma_1 := \gamma_1(a_2, a_3, a_5, a_6, b_1, b_2, b_3, g^+, h^+_1, F_1, M)\] are positive constants. Inequality \eqref{4.36} implies \(u \in \mathcal{B}_{G(x,t)}(\Omega, M, \gamma, \gamma_1, \delta)\). A similar argument shows \[u \in \mathcal{B}_{G(x,t)}(\overline{\Omega}, M, \gamma, \gamma_1, \delta),\] which completes the proof of Theorem \ref{Theorem 4.2}.
\end{proof}

\begin{proof}[\textbf{Proof of Theorem \ref{Theorem 4.21}}]
The proof proceeds along the same lines as that of Theorem~\ref{Theorem 4.2}, and we therefore omit the details. The only modification arises in the treatment of inequality~\eqref{4.31}. Specifically, by using equation~\eqref{FNV} together with the structural conditions \eqref{6.31}--\eqref{6.41}, \eqref{6.51} and \eqref{6.61}, we obtain the following estimate
\begin{align}\label{4.43}
\begin{split}
a_4 \int_{A_{k,s}} G(x,|Du|) \xi^{g^+} {\rm d}x &\leq a_5 \int_{A_{k,s}} H_1(x,|u|) \xi^{g^+} {\rm d}x +a_6 \int_{A_{k,s}} \xi^{g^+} {\rm d}x+ a_1 g^+ \int_{A_{k,s}} g(x,|Du|) \xi^{g^+-1} |D\xi|(u - k) {\rm d}x\\
&+a_2 g^+\int_{A_{k,s}} P(x,|u|) \xi^{g^+-1} |D\xi|(u - k) {\rm d}x+ a_3g^+ \int_{A_{k,s}} |D\xi| \xi^{g^+-1}(u - k) {\rm d}x\\
&+ b_1 \int_{A_{k,s}}M(x,|Du|) \xi^{g^+} (u - k) {\rm d}x+ b_2 \int_{A_{k,s}}h_1(x,|u|) \xi^{g^+} (u - k) {\rm d}x\\
&+ b_3 \int_{A_{k,s}} \xi^{g^+} (u - k) {\rm d}x+ c_1 \int_{\Gamma_{k,s}}h_2(x,|u|) \xi^{g^+} (u - k) {\rm d}x\\
&+ c_2 \int_{\Gamma_{k,s}} \xi^{g^+} (u - k) {\rm d}x\\
&\leq a_5 \int_{A_{k,s}} H_1(x,|u|) \xi^{g^+} {\rm d}x +a_6 \int_{A_{k,s}} \xi^{g^+} {\rm d}x+ a_1 g^+ \int_{A_{k,s}} g(x,|Du|) \xi^{g^+-1} |D\xi|(u - k) {\rm d}x\\
&+a_2 g^+\int_{A_{k,s}} P(x,|u|) \xi^{g^+-1} |D\xi|(u - k) {\rm d}x+ a_3g^+ \int_{A_{k,s}} |D\xi| \xi^{g^+-1}(u - k) {\rm d}x\\
&+ b_1 \int_{A_{k,s}}M(x,|Du|) \xi^{g^+} (u - k) {\rm d}x+ b_2 \int_{A_{k,s}}h_1(x,|u|) \xi^{g^+} (u - k) {\rm d}x\\
&+ (b_3 +c_2)\int_{A_{k,s}} \xi^{g^+} (u - k) {\rm d}x+ c_1 \int_{A_{k,s}}h_2(x,|u|) \xi^{g^+} (u - k) {\rm d}x,
\end{split}
\end{align}
where,
\(
\Gamma_{k,s} := \left\{ x \in B_s \cap \partial\Omega : u(x) > k \right\}.
\)
\end{proof}
\begin{proof}[\textbf{Proof of Theorems~\ref{Theorem 4.3} and~\ref{Theorem 4.4}}]
Theorems~\ref{Theorem 4.3} and~\ref{Theorem 4.4} follow as direct consequences of Theorems~\ref{Theorem 2.1}--\ref{Theorem 4.21}.
\end{proof}
   %---------------------------------
%\section{New and Recent Results Covered by Our Work}
%In this section, we briefly present some recent and novel results addressed in our work, with particular emphasis on the Hölder continuity of weak solutions.

%As mentioned in the introduction, we investigate the Hölder continuity of weak solutions to the following elliptic problem in divergence form:
%\begin{equation*}
%\operatorname{div} A(x, u, Du) + B(x, u, Du) = 0, \quad x \in \Omega, \tag{$\mathcal{P}$}
%\end{equation*}
%where the functions
%\[
%A : \Omega \times \mathbb{R} \times \mathbb{R}^n \rightarrow \mathbb{R}^n \quad \text{and} \quad
%B : \Omega \times \mathbb{R} \times \mathbb{R}^n \rightarrow \mathbb{R}
%\]
%are Carathéodory functions satisfying the \( G(x,t) \)-growth conditions given in \eqref{6.31}--\eqref{6.51}. Since the structural assumptions on \( A(x,u,Du) \) and \( B(x,u,Du) \) are formulated in terms of a generalized \textnormal{N}-function \( G(x,t) \), we provide several examples of such functions to illustrate the scope of the recent and new results encompassed by our work.
%-------------------------
\ \\
 \textbf{Author Contributions:} The authors contributed equally to this work.\\
\ \\
\textbf{ Data Availability:} Data sharing is not applicable to this article as no new data were created or analyzed in this
 study.\\
 \ \\
 \textbf{Declarations}\\
 \ \\
\textbf{ Ethical Approval:} Not applicable.\\
\ \\
\textbf{Conflict of interest:} The authors declare that they have no Conflict of interest.
%-------------------------

%%%%%%%%%%%%%%%%%%%%%%%%%%%%%%%%%%%%%%%%%%%%%%%%
%%%%%%%%%%%%%%%%%%%%%%%%%%%%%%%%%%%%%%%%%%%%%%%%%%%%%%%%%%%%%
\end{document}